\documentclass{article}

\PassOptionsToPackage{numbers, compress}{natbib}

\usepackage[preprint]{neurips_2020}




\usepackage[utf8]{inputenc} 
\usepackage[T1]{fontenc}    
\usepackage{hyperref}       
\usepackage{url}            
\usepackage{natbib} 
\usepackage{booktabs}       
\usepackage{amsfonts}       
\usepackage{nicefrac}       
\usepackage{microtype}      

\usepackage{amsmath,amsfonts,amssymb,amsthm}
\usepackage{pifont}
\setlength{\parindent}{0em}
\usepackage{color}
\usepackage{url}
\usepackage{enumitem}
\usepackage[labelformat=simple]{subcaption}

\usepackage{algorithm}
\usepackage{algorithmic}
\usepackage{comment}
\usepackage{multirow}
\usepackage{multicol}
\usepackage{empheq}

\usepackage{hf-tikz} 
\usetikzlibrary{patterns}
\usetikzlibrary{matrix,decorations.pathreplacing}
\pgfkeys{tikz/mymatrixenv/.style={decoration={brace},every left delimiter/.style={xshift=8pt},every right delimiter/.style={xshift=-8pt}}}
\pgfkeys{tikz/mymatrix/.style={matrix of math nodes,nodes in empty cells,left delimiter={[},right delimiter={]},inner sep=1pt,outer sep=1.5pt,column sep=2pt,row sep=2pt,nodes={minimum width=20pt,minimum height=10pt,anchor=center,inner sep=0pt,outer sep=0pt}}}
\pgfkeys{tikz/mymatrixbrace/.style={decorate,thick}}

\newcommand*\mymatrixbraceright[4][m]{
    \draw[mymatrixbrace] (#1.west|-#1-#3-1.south west) -- node[left=2pt] {#4} (#1.west|-#1-#2-1.north west);
}
\newcommand*\mymatrixbraceleft[4][m]{
    \draw[mymatrixbrace] (#1.east|-#1-#2-1.north east) -- node[right=2pt] {#4} (#1.east|-#1-#2-1.south east);
}
\newcommand*\mymatrixbracetop[4][m]{
    \draw[mymatrixbrace] (#1.north-|#1-1-#2.north west) -- node[above=2pt] {#4} (#1.north-|#1-1-#3.north east);
}

\usepackage{ulem}

\newcommand\independent{\protect\mathpalette{\protect\independenT}{\perp}}
\def\independenT#1#2{\mathrel{\rlap{$#1#2$}\mkern2mu{#1#2}}}

\usepackage[toc,page]{appendix}

\usepackage[textwidth=2.3cm]{todonotes}

\newtheorem{theorem}{Theorem}
\newtheorem{lem}[theorem]{Lemma}
\newtheorem{remark}[theorem]{Remark}
\newtheorem{Def}[theorem]{Definition}
\newtheorem{prop}[theorem]{Proposition}

\usepackage{graphicx,tikz}
\graphicspath{{./img/}}
\usetikzlibrary{bayesnet}
\usepackage{pgfplots}
\usetikzlibrary{spy,calc}

\newif\ifblackandwhitecycle

\usepackage{mathabx}

\definecolor{green}{RGB}{20, 148, 20}

\title{Estimation and imputation in Probabilistic Principal Component Analysis with Missing Not At Random data}

%

\author{ 
  Aude Sportisse$^{(1)}$,
   Claire Boyer$^{(1,2)}$,
   Julie Josse$^{(3,4)}$ \\
   $^{(1)}$ Sorbonne Université, $^{(2)}$ ENS Paris, $^{(3)}$ Ecole Polytechnique, $^{(4)}$ Google
}

\begin{document}

\maketitle

\begin{abstract}

Missing Not At Random (MNAR) values lead to significant biases in the data, since the probability of missingness depends on the unobserved values.
They are "not ignorable" in the sense that they often require defining a model for the missing data mechanism, which makes inference or imputation tasks more complex. Furthermore, this implies a strong \textit{a priori} on the parametric form of the distribution.
However, some works have obtained guarantees on the estimation of parameters in the presence of MNAR data, without specifying the distribution of missing data \citep{mohan2018estimation, tang2003analysis}. This is very useful in practice, but is limited to simple cases such as self-masked MNAR values in data generated according to linear regression models.
We continue this line of research, but extend it to a more general MNAR mechanism, in a more general model of the probabilistic principal component analysis (PPCA), \textit{i.e.}, a low-rank model with random effects. We prove identifiability of the PPCA parameters. We then propose an estimation of the loading coefficients and a data imputation method. They are based on estimators of means, variances and covariances of missing variables, for which consistency is discussed. These estimators have the great advantage of being calculated using only the observed data, leveraging the underlying low-rank  structure of the data. We illustrate the relevance of the method with numerical experiments on synthetic data and also on real data collected from a medical register.
	
\end{abstract}

\section{Introduction} \label{sec:intro}
The problem of missing data is ubiquitous in the practice of data analysis. Theoretical guarantees of estimation strategies or imputation methods rely on assumptions regarding the missing-data mechanism, \textit{i.e.} the cause of the lack of data. \citet{rubin1} introduced three missing-data mechanisms. The data are said (i) Missing Completely At Random (MCAR) if the probability of being missing  is the same for all observations, (ii) Missing At Random (MAR) if the probability of being missing only depends on observed values, (iii) Missing Not At Random (MNAR) if the unavailability of the data depends on both observed and unobserved data such as its value itself. We focus on this later case, which is  frequent in practice, and theoretically challenging. A classic example of MNAR data is surveys about salary for which rich people would be less willing to disclose their income.

When the data is MCAR or MAR, statistical inference is carried out by ignoring the missing data mechanism \citep{rubin_little}. In the MNAR case, the observed variables are no longer representative of the population, which leads to selection bias in the sample, and therefore to bias in the parameters estimation. Therefore, it is usually necessary to take into account the missing data distribution. Most of the time,  the missing-data mechanism distribution is specified by logistic regression models \citep{ibrahim1999missing,morikawa2017semiparametric,sportisse2018imputation}. This comes at the price of an important computational burden to perform inference and is often restricted to a limited number of MNAR variables. In the recommender system community, there are some works \citep{marlin2009collaborative,hernandez2014probabilistic,ma2019missing,wang2019doubly} proposing a joint modelling of the data and mechanism distributions using matrix factorization by debiasing existing methods for MCAR data, for instance with inverse probability weighting approaches.

In addition, a key issue of MNAR data is to establish identifiability, which is not always guaranteed \citep{miao2016identifiability}. There is a huge litterature on this, both in the non-parametric \citep{mohan2013graphical,mohan2014graphical,ilya2015missing,shpitser2016consistent,nabi2020full}, and semi-parametric settings \citep{wang2014instrumental,miao2018identification}. For parametric models, in the case of multivariate regression, \citet{tang2003analysis} and \citet{miao2016identifiability} 
guarantee the identifiability of the coefficients of the conditional distribution of $ Y|X $, whereas $Y$ is missing. \citet{tang2003analysis} estimate them by calculating those of the distributions of $X$ and $X| Y$ in the full case (using only observations with no missing values). Besides, assuming a self-masked mechanism, \textit{i.e.}, the lack depends only on the missing variable itself, \citet{mohan2018estimation} consider a related approach based on graphical models, adopting a causal point of view.
Despite the great advantage of not modeling the distribution of missing values, the hypothesis of a self-masked MNAR mechanism can be strong in certain contexts as well as that of considering simple models. 

\paragraph{Contributions.} We consider that data are generated under the latent variable model,  probabilistic principal components analysis (PPCA) \citep{tipping1999probabilistic} and contain missing values. providing linear embedding, used in practice both for data visualization and as a powerful imputation tool  \citep{ilin2010practical, josse2012handling}.  Contrary to available works that handle only MAR data \citep{ilin2010practical}, we perform PPCA with MNAR values (on several variables) and with the possibility of having different mechanisms in the same data (MNAR and MAR).
\begin{itemize}
\item 
We discuss identifiability of the PPCA model parameters, and prove it considering self-masked MNAR encompassing a large set of self-masked mechanism distributions. 
\item  For more general MNAR mechanism, we  suggest a strategy to estimate the PPCA loading matrix without any modeling of the missing-data mechanism and use it to impute missing values in this non-ignorable missing data setting. 

\item The proposed method is based on estimators for the mean, the variance and the covariance of the variables with MNAR values. We show that they can be consistently estimated, only using the complete-case analysis. 
Two strategies can lead to the proposed estimators: (i) the first one is made of algebraic arguments based on partial linear models derived from the PPCA model;
(ii) the second one is inspired by \citep{mohan2018estimation} and uses graphical models tools and the so-called  missingness graph.  
\item We derive an algorithm implementing our proposal. We show that it outperforms the state-of-the-art methods on synthetic data and on a real data set  collected from a medical registry (Traumabase$^{\mbox{\normalsize{\textregistered}}}$).
The code to reproduce all the simulations and numerical experiments is available on \url{https://github.com/AudeSportisse/PPCA_MNAR}.

\end{itemize}



\section{PPCA model with informative missing values: identifiability issues} \label{sec:model}
\paragraph{Setting} 
The data matrix $Y \in \mathbb{R}^{n \times p}$ is assumed to be generated under a fully-connected PPCA model \citep{tipping1999probabilistic} (a.k.a.\ low-rank random effects model), \textit{i.e.} by the factorization of the loading matrix $B\in \mathbb{R}^{r \times p}$ and $r$ latent variables grouped in the matrix $W \in \mathbb{R}^{n\times r}$, 
\begin{equation}
	\label{eq:model}
	Y=\mathbf{1}\alpha + WB + \epsilon,  \textrm{with}
	\left\{
	\begin{array}{ll}
		W=(W_{1.} | \hdots | W_{n.})^T,  \textrm{ with } \: W_{i.} \sim \mathcal{N}(0_r,\mathrm{Id}_{r\times r}) \in \mathbb{R}^r , \\
		B \textrm{ of rank } r < \min \{n,p\}, \\
		\alpha \in \mathbb{R}^{p} \textrm{ and } \mathbf{1}=(1 \dots 1)^T \in \mathbb{R}^n, \\
		\epsilon=(\epsilon_{1.} | \hdots | \epsilon_{n.})^T,  \textrm{ with } \: \epsilon_{i.} \sim \mathcal{N}(0_p,\sigma^2\mathrm{Id}_{p\times p}) \in \mathbb{R}^p ,
	\end{array}
	\right.
\end{equation}
for $\sigma^2$ and $r$ known. 
In the sequel, $Y_{.j}$ and $Y_{i.}$ respectively denote the column $j$ and  the row $i$ of $Y$. The rows of $Y$ are identically distributed, $\forall i \in \{1,\dots,n\}, \quad  Y_{i.} \sim \mathcal{N}(\alpha,B^TB+\sigma^2\mathrm{Id}_{p\times p}).$



Some variables $Y_{.m_1}, \dots, Y_{.m_d}$, indexed by $\mathcal{M}:= \{m_1, \hdots, m_d\}\subset \{1,\hdots ,p\}$ (with $d<p$), are supposed to have MNAR values. The other variables are considered to be observed (or M(C)AR see Appendix \ref{sec:extensionmecha}). 
We let $\Omega \in \{0,1\}^{n\times p}$ denote the missing-data pattern (or mask) as
\begin{equation}
	\forall i \in \{1,\dots,n\}, \: \forall j \in \{1,\dots,p\}, \quad  \Omega_{ij}=
	\begin{cases}
		0 &\textrm{ if } Y_{ij} \textrm{ is missing,} \\
		1 &\textrm{ otherwise.}
	\end{cases}
\end{equation}
In the sequel, let us denote the complementary of a set $\mathcal{A}$ as $\widebar{\mathcal{A}}:=\{1,\hdots , p\}\setminus \mathcal{A}$.
The MNAR mechanism we consider is defined as follows, with $\mathcal{J}\subset \widebar{\mathcal{M}}$ and $|\mathcal{J}|=r$,
\begin{equation}\label{eq:MNARmechanism}
\forall m \in \mathcal{M}, \forall i \in \{1,\dots, n\},  \quad  \mathbb{P}(\Omega_{im}=1|Y_{i.})=\mathbb{P}(\Omega_{im}=1|(Y_{ik})_{k \in \widebar{\mathcal{J}}}),
\end{equation}
which implies that the distribution of the mechanism may depend on all variables (missing or observed) except $r$ of them, that we will call \emph{pivot variables}. Note that \eqref{eq:MNARmechanism} implies that $d<p-r$. 



\paragraph{Model identifiability} 

Under 
a self-masked MNAR mechanism, one can prove the identifiability of the PPCA model, \textit{i.e.} the joint distribution of $Y$ can be uniquely determined from available information.

\begin{prop}
\label{prop:identifiability}
Consider that $d$ variables are self-masked MNAR indexed by $\mathcal{M}$ and $p-d$ variables are MCAR (or observed), indexed by $\widebar{\mathcal{M}}$, as follows
\begin{align}
\label{eq:selfmaskedMNARmechanism}
\forall m \in \mathcal{M}, \forall i \in \{1,\dots, n\},  \quad  \mathbb{P}(\Omega_{im}=1|Y_{i.})&=\mathbb{P}(\Omega_{im}=1|Y_{im})=F_m(\phi^0_{m}+\phi^1_{m}Y_{im}), \\
\label{eq:MCARmechanism}
\forall j \in \widebar{\mathcal{M}}, \forall i \in \{1,\dots, n\},  \quad  \mathbb{P}(\Omega_{ij}=1|Y_{i.})&=\mathbb{P}(\Omega_{ij}=1)=F_j(\phi_{j}),
\end{align}
with $\phi_j \in \mathbb{R}$ and $\phi_m=(\phi^0_{m},\phi^1_{m})\in \mathbb{R}^2$ the mechanism parameters. $F_j$ and $F_m$ are assumed to be strictly monotone functions with a finite support.
Assume also that
\begin{equation}\label{eq:hypID}
\forall (k,\ell) \in \{1,\dots,p\}^2, \quad  k\neq l, \qquad \Omega_{.k} \independent \Omega_{.\ell} | Y
\end{equation}

The parameters $(\alpha, \Sigma)$ of 
 the PPCA model \eqref{eq:model} and the mechanism parameters $\phi=(\phi_l)_{l \in \{1,\dots p\}}$, despite self-masked MNAR values as in \eqref{eq:selfmaskedMNARmechanism} and MCAR values as in \eqref{eq:MCARmechanism}, are identifiable. Assuming that the noise level $\sigma^2$ is known, the parameter $B$ is identifiable up to a row permutation. 
\end{prop}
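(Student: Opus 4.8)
The plan is to identify, from the law of the observable data --- the joint distribution of the mask $\Omega$ together with the revealed entries $(Y_{ij})_{\Omega_{ij}=1}$ --- first the location $\alpha$ and covariance $\Sigma=B^{T}B+\sigma^{2}\mathrm{Id}_{p\times p}$ of the Gaussian rows and the mechanism parameters $\phi$, and only then to read off $B$. First dispose of the observed / MCAR block $\widebar{\mathcal{M}}$: because these columns are MCAR and, by \eqref{eq:hypID}, their missingness is conditionally (hence, being MCAR, also unconditionally) independent of one another and of $Y$, conditioning on any subfamily of the events $\{\Omega_{ij}=1\}$, $j\in\widebar{\mathcal{M}}$, leaves the remaining sample representative, so the Gaussian sub-vector $\mathcal{N}(\alpha_{\widebar{\mathcal{M}}},\Sigma_{\widebar{\mathcal{M}}\widebar{\mathcal{M}}})$ is identified; and $\phi_{j}=F_{j}^{-1}(\mathbb{P}(\Omega_{ij}=1))$ for $j\in\widebar{\mathcal{M}}$ by strict monotonicity of $F_{j}$.

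Next, identify the mean and the covariances with the observed variables of each MNAR column. Fix $m\in\mathcal{M}$. Self-masking \eqref{eq:selfmaskedMNARmechanism} gives $\Omega_{im}\independent(Y_{ik})_{k\neq m}\,|\,Y_{im}$, so on the complete cases $\{\Omega_{im}=1\}$ the conditional law of $Y_{i,\widebar{\mathcal{M}}}$ given $Y_{im}$ equals the true one; combined with \eqref{eq:hypID} (so that the residual MCAR missingness inside $\widebar{\mathcal{M}}$ does not interfere once $\Omega_{im}=1$ is imposed), every pair $(Y_{im},Y_{ij})$, $j\in\widebar{\mathcal{M}}$, is identified from observed data. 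From its Gaussian regression one reads off the slope $\Sigma_{mj}/\Sigma_{mm}$, the residual variance $\Sigma_{jj}-\Sigma_{mj}^{2}/\Sigma_{mm}$ and the intercept $\alpha_{j}-\alpha_{m}\Sigma_{mj}/\Sigma_{mm}$; since $\alpha_{j}$ and $\Sigma_{jj}$ are already known, and since at least one of these slopes is nonzero --- i.e.\ $\Sigma_{m\widebar{\mathcal{M}}}\neq 0$, a genericity condition on the loading matrix that is automatic when $B$ has no zero column and $B_{\cdot\widebar{\mathcal{M}}}$ has full row rank $r$ --- one solves successively for $\Sigma_{mm}$, then $\alpha_{m}$, then $\Sigma_{mj}$ for every $j\in\widebar{\mathcal{M}}$.

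With the marginal law $\mathcal{N}(\alpha_{m},\Sigma_{mm})$ of $Y_{im}$ now known, the mechanism follows by ``unweighting'': the observed density of column $m$ satisfies $f(Y_{im}=t\mid\Omega_{im}=1)=F_{m}(\phi^{0}_{m}+\phi^{1}_{m}t)\,\varphi_{\alpha_{m},\Sigma_{mm}}(t)/\mathbb{P}(\Omega_{im}=1)$, in which every factor other than $F_{m}(\phi^{0}_{m}+\phi^{1}_{m}t)$ is identified and $\varphi_{\alpha_{m},\Sigma_{mm}}>0$, so $t\mapsto F_{m}(\phi^{0}_{m}+\phi^{1}_{m}t)$ is identified and, inverting the strictly monotone $F_{m}$, so is the affine map $t\mapsto\phi^{0}_{m}+\phi^{1}_{m}t$, i.e.\ $(\phi^{0}_{m},\phi^{1}_{m})$. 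Applying the same unweighting to a pair of MNAR columns completes $\Sigma$: by self-masking and \eqref{eq:hypID}, $\mathbb{P}(\Omega_{im}=1,\Omega_{im'}=1\mid Y_{i.})=F_{m}(\phi^{0}_{m}+\phi^{1}_{m}Y_{im})\,F_{m'}(\phi^{0}_{m'}+\phi^{1}_{m'}Y_{im'})$, so dividing the observed joint law of $(Y_{im},Y_{im'})$ on $\{\Omega_{im}=\Omega_{im'}=1\}$ by this (now known) weight returns the true bivariate Gaussian and hence $\Sigma_{mm'}$. Having identified $(\alpha,\Sigma,\phi)$ and with $\sigma^{2}$ known, $B^{T}B=\Sigma-\sigma^{2}\mathrm{Id}_{p\times p}$ is identified; this determines the rank-$r$ matrix $B$ up to the rotational ambiguity of PPCA, which in the usual parametrization reduces to a row permutation (and sign).

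The crux is the second step: the naive complete-case mean and variance of an MNAR column are selection-biased, and the whole argument rests on anchoring $\alpha_{m}$ and $\Sigma_{mm}$ through the uncontaminated regression of $Y_{im}$ on genuinely observed variables; this is exactly where both the conditional-independence hypothesis \eqref{eq:hypID} and the genericity $\Sigma_{m\widebar{\mathcal{M}}}\neq 0$ enter, without which $\Sigma_{mm}$ and $\alpha_{m}$ could not be disentangled from the bias. The subsequent reduction of the mechanism to the two scalars $(\phi^{0}_{m},\phi^{1}_{m})$ is comparatively routine, but it does use the monotonicity and support assumptions on $F_{m}$ to license the inversion and to keep the re-weighting well defined and integrable.
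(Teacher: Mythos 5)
Your overall chain --- identify the MCAR/observed block, use the complete-case conditional law of an observed $Y_{.j}$ given the MNAR $Y_{.m}$ (licensed by self-masking \eqref{eq:selfmaskedMNARmechanism} together with \eqref{eq:hypID}) to anchor $\alpha_m$ and $\Sigma_{m\widebar{\mathcal{M}}}$, unweight the observed density of column $m$ to recover $(\phi^0_m,\phi^1_m)$ by injectivity of $F_m$, unweight pairs of MNAR columns to recover $\Sigma_{mm'}$, and finally read $B$ off $B^TB=\Sigma-\sigma^2\mathrm{Id}_{p\times p}$ --- is essentially the paper's argument, rewritten constructively instead of showing that two parameterizations inducing the same observed law must coincide.

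The genuine gap is in how you identify $\Sigma_{mm}$. You extract it from the residual variance of the complete-case regression of $Y_{.j}$ on $Y_{.m}$, which requires a nonzero slope, i.e.\ $\Sigma_{m\widebar{\mathcal{M}}}\neq 0$. You present this as a genericity condition that is ``automatic when $B$ has no zero column and $B_{.\widebar{\mathcal{M}}}$ has full row rank'', but neither of these is assumed in Proposition \ref{prop:identifiability}: the proposition only requires $B$ to have rank $r$ (Assumption \ref{hyp1} belongs to Section \ref{sec:generalcase}, not to this statement), and a rank-$r$ loading matrix can perfectly well have $B_{.m}$ orthogonal to the span of $(B_{.j})_{j\in\widebar{\mathcal{M}}}$ (take $r=2$, $p=3$, $\mathcal{M}=\{1\}$, $B_{.1}=(0,1)^T$, $B_{.2}=B_{.3}=(1,0)^T$). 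In that case your regression has slope $0$, residual variance $\Sigma_{jj}$ and intercept $\alpha_j$, and your successive recovery of $\Sigma_{mm}$, then $\alpha_m$, then $\phi_m$ and the covariances involving $m$ collapses. The paper sidesteps this for the variance by applying \cite[Theorem 1 a)]{miao2016identifiability} to the univariate complete-case law of the self-masked Gaussian $Y_{.m}$, which needs no correlation with other variables; that is precisely where the hypothesis that $F_m$ is strictly monotone with finite support does its work, and it is telling that your argument never uses that hypothesis beyond injectivity of $F_m$ --- you have implicitly traded it for the extra correlation condition. (To be fair, the paper's own identification of $\alpha_m$ and $\phi_m$ also divides by $\Sigma'_{mj}$, so a similar nondegeneracy is tacitly present there; but for $\Sigma_{mm}$ the cited theorem requires nothing of the sort, so as written your proof establishes a strictly weaker statement than the one claimed.) To repair it without changing the hypotheses, identify $\Sigma_{mm}$ first via the result of \citet{miao2016identifiability}, as the paper does, and only then run your regression/unweighting steps; otherwise the condition $\Sigma_{m\widebar{\mathcal{M}}}\neq 0$ must be added to the statement.
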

The proof is given in Appendix \ref{sec:proofidentif}. What is striking here is that mild assumptions are added about the mechanism definition. Indeed, no standard function for $F_m, m \in \mathcal{M}$ is discarded. In particular, the logistic function can be used, whereas \citep{miao2016identifiability} presented many counterexamples when identification fails considering this distribution. 

\section{Estimators with theoretical guarantees}\label{sec:generalcase}
In this section, we provide estimators of the means, variances and covariances for the MNAR variables, when data are generated as described in Section \ref{sec:model}. 
These estimators can be used to perform PPCA with informative missing values, providing an estimator of the loading matrix $B$ in \eqref{eq:model}.  This latter can be in turn used to predict missing values. This new imputation method for MNAR variables in a low-rank context is detailed in Algorithm \ref{alg:imputation}. 

For the rest of this section, denoting $\mathcal{J}_{-j}:= \mathcal{J} \setminus \{ j\}$, assume the following
\begin{enumerate}[label=\textbf{A\arabic*.}]
    \setcounter{enumi}{0}
    \item \label{hyp1} $\forall m \in \mathcal{M}$, $\forall j \in \mathcal{J}$, \, $\begin{pmatrix} B_{.m} & (B_{.j'})_{j' \in \mathcal{J}_{-j}} \end{pmatrix}$ is invertible,
    \item \label{hyp2} $\forall m \in \mathcal{M}$, $\forall j \in \mathcal{J}$, \, $Y_{.j} \independent \Omega_{.m} | (Y_{.k})_{k \in \widebar{\{j\}}}$.
\end{enumerate}

Note that Assumption \ref{hyp1} implies that $B$ has a full rank $r$ and that any variable is generated by all the latent variables. Assumption \ref{hyp2} follows from the missing-data mechanism in \eqref{eq:MNARmechanism}. 

For the sake of clarity, we start by illustrating these assumptions presenting the methodology in small dimension, before showing results in the general case. 



\subsection{Toy example: estimation of the mean of a  MNAR variable}
\label{sec:toyexample}

Consider the toy example where $p= 3,r= 2$, in which only one variable can be missing, and fix $\mathcal{M}=\{1\}$ and $\mathcal{J}=\{2,3\}$. Note that the MNAR mechanism is self-masked in such a context, because Equation \eqref{eq:MNARmechanism} leads to $\mathbb{P}(\Omega_{.1}=0|Y_{.1},Y_{.2},Y_{.3})=\mathbb{P}(\Omega_{.1}=0|Y_{.1}).$  but the method can be extended to other cases. A first goal is to estimate the mean of $Y_{.1}$, without specifying the distribution of the missing-data mechanism and using only the observed data.   

\subparagraph{Using algebraic arguments}  
We proceed in three steps:
(i) \ref{hyp1} allows to obtain  linear link between the pivot variables ($Y_{.2},Y_{.3}$) and the MNAR variable $Y_{.1}$. In particular, one has
\begin{equation}\label{eq:linear_toy}
Y_{.2} = \mathcal{B}_{2\rightarrow1,3[0]} + \mathcal{B}_{2\rightarrow1,3[1]}Y_{.1} + \mathcal{B}_{2\rightarrow1,3[3]}Y_{.3} + \zeta,
\end{equation}
with $\mathcal{B}_{2\rightarrow1,3[0]}$, $\mathcal{B}_{2\rightarrow1,3[1]}$ and $\mathcal{B}_{2\rightarrow1,3[3]}$ the intercept and coefficients standing for the effects of $Y_{.2}$ on $Y_{.1}$ and $Y_{.3}$, and with $\zeta$ a noise term;
(ii)  \ref{hyp2}, \textit{i.e.} $Y_{.2} \independent \Omega_{.1} | Y_{.1}, Y_{.3}$, is required to obtain identifiable and consistent parameters of the distribution of $Y_{.2}$ given $Y_{.1},Y_{.3}$ in the complete-case when $\Omega_{.1}=1$, denoted as $\mathcal{B}_{2\rightarrow1,3[0]}^c$, $\mathcal{B}_{2\rightarrow1,3[1]}^c$ and $\mathcal{B}_{2\rightarrow1,3[3]}^c$,
\begin{equation}\label{eq:linear_toy}
(Y_{.2})_{| \Omega_{.1}=1} = \mathcal{B}_{2\rightarrow1,3[0]}^c + \mathcal{B}_{2\rightarrow1,3[1]}^c Y_{.1} + \mathcal{B}_{2\rightarrow1,3[3]}^c Y_{.3} + \zeta^c,
\end{equation}
(note that the regression of $Y_{.1}$ on $(Y_{.2},Y_{.3})$ is prohibited, as \ref{hyp2} does not hold);
(iii) using again \ref{hyp2}, 
$$\mathbb{E}\left[Y_{.2}|Y_{.1}, Y_{.3},\Omega_{.1}=1\right]=\mathbb{E}\left[\mathcal{B}_{2\rightarrow1,3[0]}^c + \mathcal{B}_{2\rightarrow1,3[1]}^c Y_{.1} + \mathcal{B}_{2\rightarrow1,3[3]}^c Y_{.3}|Y_{.1},Y_{.3}\right],$$
and taking the expectation leads to
$$\mathbb{E}\left[Y_{.2}\right]=\mathcal{B}_{2\rightarrow1,3[0]}^c + \mathcal{B}_{2\rightarrow1,3[1]}^c\mathbb{E}\left[Y_{.1}\right] + \mathcal{B}_{2\rightarrow1,3[3]}^c\mathbb{E}\left[Y_{.3}\right].$$
The latter expression can be reshuffled so that
the expectation of $Y_{.1}$ can be estimated: the means of $Y_{.2}$ and $Y_{.3}$ are estimated by standard empirical estimators (it will be  Assumption \ref{hyp4} in the sequel).

\begin{multicols}{2}
\subparagraph{Using graphical arguments}
The PPCA model can be represented as structural causal graphs \citep{pearl2003causality}, as illustrated in Figure \ref{fig:proofToy}. Starting from the top left graph (in which each variable is generated by a combination of all latent variables, see \ref{hyp1}), one gets the top right one, as $Y_{.1}\leftarrow W_{.1} \rightarrow Y_{.2}$ is equivalent (see \citep[page 52]{pearl2003causality}) to $Y_{.1}\leftrightarrow Y_{.2}$. 
Then,  six reduced graphical models can be derived from the top right graph (two instances are represented in the bottom).
Indeed, a bidirected edge $Y_{.1} \leftrightarrow Y_{.2}$ can be interchanged (see \citep[rule 1, page 147]{pearl2003causality}) with an oriented edge $Y_{.1} \rightarrow Y_{.2}$, if each neighbor of $Y_{.2}$ (\textit{i.e.} $Y_{.1}$ or $Y_{.3}$) is inseparable of $Y_{.1}$ (see \citep[page 17]{pearl2003causality}). The bottom left graph can also be represented by Equation \eqref{eq:linear_toy}, which gives a connection between the algebraic and graphical approaches. 

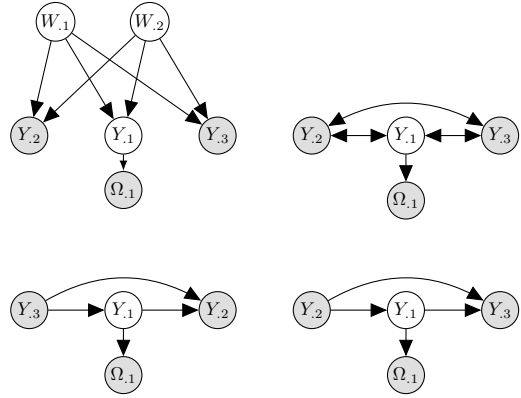
\begin{figure}[H]
        \vspace{0.3cm}
		\begin{tikzpicture}[scale=.7,every node/.style={scale=.7}]
		
		\node[obs]                               (y1) {$Y_{.2}$};
		\node[latent, right=0.75cm of y1]            (y2) {$Y_{.1}$};
		\node[obs, right=0.75cm of y2]            (y3) {$Y_{.3}$};
		\node[latent, above=of y1, xshift=0.5cm] (w1) {$W_{.1}$};
		\node[latent, above=of y2, xshift=0.5cm]  (w2) {$W_{.2}$};
		\node[obs,below of=y2, yshift=-1cm]                               (r2) {$\Omega_{.1}$};
		
		\edge{w1,w2} {y1,y2,y3} ; %
		\draw[->,>=latex] (y2) edge (r2);
		\node[obs, right=0.75cm of y3]                               (y1ter) {$Y_{.2}$};
		\node[latent, right=0.75cm of y1ter]            (y2ter) {$Y_{.1}$};
		\node[obs, right=0.75cm of y2ter]            (y3ter) {$Y_{.3}$};
		\node[obs,below of=y2ter, yshift=-1.2cm]                               (r2ter) {$\Omega_{.1}$};
		\node[left=0.6cm of y1ter]                               (placter) {};
		\node[left=0.4cm of r2ter]                               (placter2) {};
		
		\draw[<->] (y1ter) edge (y2ter);
		\draw[<->] (y1ter) edge[bend left] (y3ter);
		\draw[<->] (y2ter) edge (y3ter);
		\draw[->] (y2ter) edge (r2ter);
		\end{tikzpicture}
		
		\begin{tikzpicture}[scale=.6,every node/.style={scale=.7}]
		\node[obs] (y1cinq) {$Y_{.3}$};
		\node[latent, right=0.75cm of y1cinq]            (y2cinq) {$Y_{.1}$};
		\node[obs, right=0.75cm of y2cinq]            (y3cinq) {$Y_{.2}$};
		\node[obs,below of=y2cinq, yshift=-1.2cm]                               (r2cinq) {$\Omega_{.1}$};
		\node[above=0.75cm of y2cinq]                               (plac3) {};
		
		\draw[->] (y1cinq) edge (y2cinq);
		\node [right=0.75cm of y1cinq]    (y1cinqpoint) {};
		\draw (y1cinqpoint) node[above] {};
		\draw[->] (y1cinq) edge[bend left] (y3cinq);
		\node [above=0.4cm of y2cinq]    (y3cinqpoint) {};
		\draw (y3cinqpoint) node[above] {};
		\draw[->] (y2cinq) edge (y3cinq);
		\node [right=0.75cm of y2cinq]    (y2cinqpoint) {};
		\draw (y2cinqpoint) node[above] {};
		\draw[->] (y2cinq) edge (r2cinq);		\node[obs,right=0.75cm of y3cinq]    (y1cinqbis) {$Y_{.2}$};
		\node[latent, right=0.75cm of y1cinqbis]            (y2cinqbis) {$Y_{.1}$};
		\node[obs, right=0.75cm of y2cinqbis]            (y3cinqbis) {$Y_{.3}$};
		\node[obs,below of=y2cinqbis, yshift=-1.2cm]                               (r2cinqbis) {$\Omega_{.1}$};
		
		\draw[->] (y1cinqbis) edge (y2cinqbis);
		\node [right=0.85cm of y1cinqbis]    (y1cinqpointbis) {};
		\draw (y1cinqpointbis) node[above] {};
		\draw[->] (y1cinqbis) edge[bend left] (y3cinqbis);
		\node [above=0.4cm of y2cinqbis]    (y3cinqpointbis) {};
		\draw (y3cinqpointbis) node[above] {};
		\draw[->] (y2cinqbis) edge (y3cinqbis);
		\node [right=0.7cm of y2cinqbis]    (y2cinqpointbis) {};
		\draw (y2cinqpointbis) node[above] {};
		\draw[->] (y2cinqbis) edge (r2cinqbis);
		\end{tikzpicture}
	\caption{\label{fig:proofToy} Graphical models for the toy example with one missing variable $Y_{.1}$, $p=3$ and $r=2$.} 
\end{figure}
\end{multicols}

\subsection{Estimation of the mean, variance and covariances of the MNAR variables}

Estimators of the mean, variance and covariances of the  variables with MNAR values can be computed one by one.
That is why in the following, we detail the results only for a single variable, but we still  consider the case where several variables have MNAR values. It can easily be extended to the case where other variables can have MCAR and MAR values, as explained in Appendix \ref{sec:extensionmecha}.

We adopt an algebraic strategy to derive estimators (see Appendix \ref{sec:proofresults} for proofs) but graphical arguments can be used to obtain similar results (see Appendix \ref{sec:graph}).
 The starting point  is to exploit the linear links between variables, as described in the next lemma.

\begin{lem}\label{lem:PCAlinear}
	Under the PPCA model \eqref{eq:model} and Assumption \ref{hyp1}, choose $j\in \mathcal{J}$.
	One has
		\begin{equation}\label{eq:linear_jr}
		Y_{.j}=\mathcal{B}_{j\rightarrow m,\mathcal{J}_{-j}[0]}+\sum_{j'\in \mathcal{J}_{-j}} \mathcal{B}_{j\rightarrow m,\mathcal{J}_{-j}[j']} Y_{.j'} + \mathcal{B}_{j\rightarrow m,\mathcal{J}_{-j}[m]} Y_{.m} + \zeta,
    \end{equation}
	where $\zeta=-\sum_{j'\in\mathcal{J}_{-j}} \mathcal{B}_{j\rightarrow m,\mathcal{J}_{-j}[j']} \epsilon_{.j'} - \mathcal{B}_{j\rightarrow m,\mathcal{J}_{-j}[m]} \epsilon_{.m}+ \epsilon_{.j}.$ is a noise term.
	
	$\mathcal{B}_{j\rightarrow m,\mathcal{J}_{-j}[0]}$, $\mathcal{B}_{j\rightarrow m,\mathcal{J}_{-j}[j']}$ and $\mathcal{B}_{j\rightarrow m,\mathcal{J}_{-j}[m]}$ are given in Appendix \ref{sec:proofPCAlinar} and depend on the coefficients of $B$ given in \eqref{eq:model}.
\end{lem}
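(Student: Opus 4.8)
The plan is to eliminate the latent matrix $W$ by inverting an $r\times r$ block of $B$ chosen precisely so that Assumption \ref{hyp1} applies, and then to substitute the resulting expression for $W$ back into the column equation for $Y_{.j}$.

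First I would rewrite the PPCA model \eqref{eq:model} column by column: for any index $k\in\{1,\dots,p\}$, $Y_{.k}=\alpha_k\mathbf{1}+W B_{.k}+\epsilon_{.k}$, where $B_{.k}\in\mathbb{R}^r$ is the $k$-th column of $B$ and $W\in\mathbb{R}^{n\times r}$. Fix any $m\in\mathcal{M}$ and $j\in\mathcal{J}$ and set $S:=\{m\}\cup\mathcal{J}_{-j}$, which has exactly $|S|=r$ elements since $|\mathcal{J}|=r$. Stacking the corresponding columns gives $Y_{.S}=\mathbf{1}\alpha_S+W B_{.S}+\epsilon_{.S}$, where $B_{.S}\in\mathbb{R}^{r\times r}$ is the submatrix of $B$ with columns indexed by $S$ and $\alpha_S$ is the restriction of $\alpha$ to $S$. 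By Assumption \ref{hyp1}, $B_{.S}$ — which up to a harmless reordering of columns is the matrix $\bigl(B_{.m}\ (B_{.j'})_{j'\in\mathcal{J}_{-j}}\bigr)$ — is invertible, so I can solve $W=\bigl(Y_{.S}-\mathbf{1}\alpha_S-\epsilon_{.S}\bigr)B_{.S}^{-1}$.

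Then I would substitute this into $Y_{.j}=\alpha_j\mathbf{1}+W B_{.j}+\epsilon_{.j}$, obtaining $Y_{.j}=\alpha_j\mathbf{1}+\bigl(Y_{.S}-\mathbf{1}\alpha_S-\epsilon_{.S}\bigr)B_{.S}^{-1}B_{.j}+\epsilon_{.j}$. Writing $\gamma:=B_{.S}^{-1}B_{.j}\in\mathbb{R}^{r}$ and indexing its entries by $S$, this rearranges to $Y_{.j}=\bigl(\alpha_j-\sum_{k\in S}\alpha_k\gamma_k\bigr)\mathbf{1}+\sum_{k\in S}\gamma_k Y_{.k}+\bigl(\epsilon_{.j}-\sum_{k\in S}\gamma_k\epsilon_{.k}\bigr)$. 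Identifying the coefficient of $Y_{.m}$ with $\mathcal{B}_{j\rightarrow m,\mathcal{J}_{-j}[m]}$, the coefficients of $Y_{.j'}$ with $\mathcal{B}_{j\rightarrow m,\mathcal{J}_{-j}[j']}$ for $j'\in\mathcal{J}_{-j}$, and the constant term with $\mathcal{B}_{j\rightarrow m,\mathcal{J}_{-j}[0]}$, one recovers exactly \eqref{eq:linear_jr}, and the residual $\epsilon_{.j}-\sum_{k\in S}\gamma_k\epsilon_{.k}$ matches the claimed expression for $\zeta$. The explicit formulas deferred to Appendix \ref{sec:proofPCAlinar} are then simply the entries of $\gamma=B_{.S}^{-1}B_{.j}$ together with the associated intercept $\alpha_j-\sum_{k\in S}\alpha_k\gamma_k$.

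Every step here is elementary linear algebra, so there is no genuine analytic obstacle; the only points requiring care are bookkeeping — verifying that $S$ really has cardinality $r$ so that $B_{.S}$ is square, noting that the column ordering used to state Assumption \ref{hyp1} does not affect invertibility, and carrying the split of $S$ into $\{m\}$ and $\mathcal{J}_{-j}$ consistently through the substitution so that the noise term lines up exactly with the stated $\zeta$.
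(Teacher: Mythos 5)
Your proposal is correct and follows essentially the same route as the paper's proof: restrict the model to the $r$ columns indexed by $\{m\}\cup\mathcal{J}_{-j}$, invert the corresponding $r\times r$ submatrix of $B$ (Assumption \ref{hyp1}) to solve for $W$, and substitute back into the equation for $Y_{.j}$. Your compact vector notation $\gamma=B_{.S}^{-1}B_{.j}$ matches the paper's entrywise sums for the coefficients and intercept, and the noise term lines up with the stated $\zeta$.
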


We then define the regression coefficients of $Y_{.j}$ on $Y_{.m}$ and $Y_{.k}$, for $k \in \mathcal{J}_{-j}$ in the complete case, that will be used to express the mean of a  variable with MNAR values. 

\begin{Def}[Coefficients in the complete case]
\label{def:coeff_complete_Case}
For $j\in \mathcal{J}$ and $k \in \mathcal{J}_{-j}$, let $\mathcal{B}_{j\rightarrow m,\mathcal{J}_{-j}[0]}^c$, $\mathcal{B}_{j\rightarrow m,\mathcal{J}_{-j}[m]}^c$ and $\mathcal{B}_{j\rightarrow m,\mathcal{J}_{-j}[j']}^c$ be respectively the intercept and the coefficients standing for the effects of $Y_{.j}$ on $(Y_{.m}, (Y_{.j'})_{j' \in \mathcal{J}_{-j}})$ in the complete case, \textit{i.e.} when $\Omega_{.m}=1$:
\begin{equation}\label{eq:coeffcompletecase}
    \left(Y_{.j}\right)_{|\Omega_{.m}= 1} := \mathcal{B}^c_{j\rightarrow m,\mathcal{J}_{-j}[0]}
    +\sum_{j'\in \mathcal{J}_{-j}} \mathcal{B}^c_{j\rightarrow m,\mathcal{J}_{-j}[j']} Y_{.j'} + \mathcal{B}^c_{j\rightarrow m,\mathcal{J}_{-j}[m]} Y_{.m} + \zeta^c, 
\end{equation}
with $\zeta^c=-\sum_{j'\in\mathcal{J}_{-j}} \mathcal{B}^c_{j\rightarrow m,\mathcal{J}_{-j}[j']} \epsilon_{.j'} - \mathcal{B}^c_{j\rightarrow m,\mathcal{J}_{-j}[m]} \epsilon_{.m}+ \epsilon_{.j}.$
\end{Def}

   Then, we make the two following assumptions:
\begin{enumerate}[label=\textbf{A\arabic*.}]
    \setcounter{enumi}{2}
    \item \label{hyp3}  $\forall j \in \mathcal{J}, \forall m \in \mathcal{M}$, the complete-case coefficients $\mathcal{B}_{j\rightarrow m,\mathcal{J}_{-j}[0]}^c$, $\mathcal{B}_{j\rightarrow m,\mathcal{J}_{-j}[m]}^c$ and $\mathcal{B}_{j\rightarrow m,\mathcal{J}_{-j}[k]}^c, k\neq j, k \in \mathcal{J}_{-j}$ can be consistently estimated. 
\end{enumerate}
\begin{enumerate}[label=\textbf{A\arabic*.}]
    \setcounter{enumi}{3}
    \item \label{hyp4}The means $(\alpha_{j})_{j\in\mathcal{J}}$, variances $(\mathrm{Var}(Y_{.j}))_{j\in\mathcal{J}}$ and covariances $(\mathrm{Cov}(Y_{.j},Y_{.j'}))_{j, j' \in\mathcal{J}}$, for $j\neq j'$ of the $r$ pivot variables can be consistently estimated.
\end{enumerate}
Note that Assumption \ref{hyp4} is  met whether the $r$ pivot variables are fully observed.

\begin{prop}[Mean estimator] \label{prop:mean_formula_general}
	Consider the PPCA model \eqref{eq:model}. Under Assumptions \ref{hyp1} and \ref{hyp2}, an estimator of the mean of a MNAR variable $Y_{.m}$, for $m\in\mathcal{M}$, can be constructed as follows: choose $j\in\mathcal{J}$, and compute
	\begin{equation}\label{eq:expectation_main}
		\hat{\alpha}_m :=\frac{\hat{\alpha}_{j}-\hat{\mathcal{B}}_{j\rightarrow m, \mathcal{J}_{-j}[0]}^c-\sum_{j'  \in \mathcal{J}_{-j}}\hat{\mathcal{B}}_{j\rightarrow m, \mathcal{J}_{-j}[j']}^c\hat{\alpha}_{j'}}{ \hat{\mathcal{B}}_{j\rightarrow m, \mathcal{J}_{-j}[m]}^c},
	\end{equation}
	with $(\hat{\mathcal{B}}^c_{j\rightarrow m, \mathcal{J}_{-j}[k]})_{k \in \{0,m\}\cup\mathcal{J}_{-j}}$ estimators of the coefficients obtained from Definition \ref{def:coeff_complete_Case}.
	
	Under the additional Assumptions \ref{hyp3} and \ref{hyp4}, this estimator is consistent. 
\end{prop}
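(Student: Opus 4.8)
\textbf{Proof plan for Proposition~\ref{prop:mean_formula_general}.}
The plan is to establish the algebraic identity underlying \eqref{eq:expectation_main} first, and then to deduce consistency from Assumptions \ref{hyp3} and \ref{hyp4} by a continuous-mapping argument. The three conceptual steps mirror the toy example of Section~\ref{sec:toyexample}, carried out for general $p$ and $r$.

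First I would fix $m \in \mathcal{M}$ and $j \in \mathcal{J}$, and invoke Lemma~\ref{lem:PCAlinear} to obtain the population linear link \eqref{eq:linear_jr} between $Y_{.j}$, the pivot variables $(Y_{.j'})_{j' \in \mathcal{J}_{-j}}$ and the MNAR variable $Y_{.m}$; here Assumption~\ref{hyp1} guarantees that the relevant submatrix of $B$ is invertible so the coefficients $\mathcal{B}_{j\rightarrow m,\mathcal{J}_{-j}[\cdot]}$ are well defined and, in particular, $\mathcal{B}_{j\rightarrow m,\mathcal{J}_{-j}[m]} \neq 0$. The key point is then to argue that the complete-case coefficients of Definition~\ref{def:coeff_complete_Case} coincide with the population ones, i.e. $\mathcal{B}^c_{j\rightarrow m,\mathcal{J}_{-j}[k]} = \mathcal{B}_{j\rightarrow m,\mathcal{J}_{-j}[k]}$ for $k \in \{0,m\}\cup\mathcal{J}_{-j}$. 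This is where Assumption~\ref{hyp2} enters: since $Y_{.j} \independent \Omega_{.m} \mid (Y_{.k})_{k \in \widebar{\{j\}}}$, conditioning on $\Omega_{.m}=1$ does not change the conditional law of $Y_{.j}$ given $(Y_{.m},(Y_{.j'})_{j'\in\mathcal{J}_{-j}})$, hence the regression function — and therefore its intercept and slopes — is unaffected. (This is exactly the reason the reverse regression of $Y_{.m}$ on the pivots is forbidden: $\Omega_{.m}$ is not independent of $Y_{.m}$.)

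Next I would take conditional expectations in \eqref{eq:coeffcompletecase} given $(Y_{.m},(Y_{.j'})_{j'\in\mathcal{J}_{-j}})$, using that the noise term $\zeta^c$ is centered conditionally on these variables and on $\Omega_{.m}=1$ (again by \ref{hyp2}, the conditioning on $\Omega_{.m}=1$ is innocuous, and $\zeta^c$ is a centered linear combination of the Gaussian errors $\epsilon_{.k}$ independent of the $W$'s). Then taking a further (unconditional) expectation and using linearity yields
\[
\alpha_j = \mathcal{B}^c_{j\rightarrow m,\mathcal{J}_{-j}[0]} + \sum_{j'\in\mathcal{J}_{-j}} \mathcal{B}^c_{j\rightarrow m,\mathcal{J}_{-j}[j']}\,\alpha_{j'} + \mathcal{B}^c_{j\rightarrow m,\mathcal{J}_{-j}[m]}\,\alpha_m .
\]
Since $\mathcal{B}^c_{j\rightarrow m,\mathcal{J}_{-j}[m]} = \mathcal{B}_{j\rightarrow m,\mathcal{J}_{-j}[m]} \neq 0$ by \ref{hyp1}, this can be solved for $\alpha_m$, giving precisely the population version of \eqref{eq:expectation_main}; the estimator is its empirical plug-in.

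Finally, consistency follows by combining \ref{hyp3} (the $\hat{\mathcal{B}}^c$'s converge in probability to the true complete-case coefficients) with \ref{hyp4} (the $\hat\alpha_j$, $j\in\mathcal{J}$, are consistent for the pivot means), and applying the continuous mapping theorem to the map $(\hat\alpha_j, (\hat{\mathcal{B}}^c_{j\rightarrow m,\mathcal{J}_{-j}[k]})_k) \mapsto \hat\alpha_m$, which is continuous on the region where the denominator is nonzero — a region to which the true parameter belongs, so Slutsky's lemma handles the denominator. I expect the main obstacle to be the rigorous justification of the coefficient-matching step $\mathcal{B}^c = \mathcal{B}$ under \ref{hyp2}: one must be careful that \ref{hyp2} gives equality of the full conditional distribution of $Y_{.j}$ given the other variables (so that the best linear predictor, which here is the true conditional mean by joint Gaussianity of $Y$, is genuinely preserved), rather than merely equality of some marginal moments. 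Everything else is routine linear algebra and standard convergence arguments.
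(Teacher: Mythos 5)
Your route is in substance the paper's: use \ref{hyp2} to identify $\mathbb{E}[Y_{.j}\mid (Y_{.k})_{k\in\widebar{\{j\}}}]$ with its complete-case counterpart, substitute the linear representation of Definition \ref{def:coeff_complete_Case}, take a total expectation to get $\alpha_j=\mathcal{B}^c_{j\rightarrow m,\mathcal{J}_{-j}[0]}+\sum_{j'\in\mathcal{J}_{-j}}\mathcal{B}^c_{j\rightarrow m,\mathcal{J}_{-j}[j']}\alpha_{j'}+\mathcal{B}^c_{j\rightarrow m,\mathcal{J}_{-j}[m]}\alpha_m$, solve for $\alpha_m$, and obtain consistency from \ref{hyp3}--\ref{hyp4} by a continuous-mapping/Slutsky argument. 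Two of your justifications are inaccurate, though neither is fatal. First, $\zeta^c$ is \emph{not} centered conditionally on $Z=(Y_{.k})_{k\in\widebar{\{j\}}}$: the errors $\epsilon_{.m}$ and $(\epsilon_{.j'})_{j'\in\mathcal{J}_{-j}}$ enter the conditioning variables themselves, so $\mathbb{E}[\epsilon_{.k}\mid Z]=\sigma^2\,(\mathrm{Var}(Z)^{-1})_{k.}\,(Z-\mathbb{E}[Z])\neq 0$ in general; this endogeneity is exactly what the paper acknowledges in Section \ref{sec:algo} and what produces the $O(\sigma^2)$ terms in the variance proof. The paper's proof (and the fix for yours) simply keeps the term $\mathbb{E}[\zeta^c\mid Z]$ and only uses $\mathbb{E}[\epsilon_{.k}]=0$ after the outer, unconditional expectation; the mean identity then holds exactly, with no $\sigma^2$ approximation and without needing your coefficient-matching step $\mathcal{B}^c=\mathcal{B}$ (which, as stated via "the regression function is preserved", also runs into the same exogeneity caveat).

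Second, \ref{hyp1} does not by itself give $\mathcal{B}^c_{j\rightarrow m,\mathcal{J}_{-j}[m]}\neq 0$: by Lemma \ref{lem:PCAlinear} this coefficient equals $\sum_{k\in\{m\}\cup\mathcal{J}_{-j}}B^{-1}_{km}B_{jk}$, which vanishes precisely when the column $B_{.j}$ lies in the span of the $r-1$ columns $(B_{.j'})_{j'\in\mathcal{J}_{-j}}$, a situation compatible with the invertibility demanded in \ref{hyp1}. The paper handles this by adding, in the appendix restatement of the proposition, the explicit assumption that this coefficient (and its estimator) is nonzero; you should do the same rather than attribute it to \ref{hyp1}.
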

The proof is given in Appendix \ref{sec:proofmean}.
Proposition \ref{prop:mean_formula_general} provides an estimator easily computable from complete observations.  Furthermore, different choices of $Y_{.j}$, $j\in\mathcal{J}$ can be done in Equation \eqref{eq:expectation_main}: all the resulting estimators may be aggregated to stabilize the estimation of $\alpha_m$.

\begin{prop}[Variance and covariances estimators] \label{prop:var_formula_general}
	Consider the PPCA model \eqref{eq:model}. Under Assumptions \ref{hyp1} and \ref{hyp2}, an estimator of the variance of a MNAR variable $Y_{.m}$, for $m\in\mathcal{M}$, ,and its covariances with the pivot variables, can be constructed as follows: choose a pivot variable $Y_{.j}$ for $j \in \mathcal{J}$ and compute
	\begin{equation}\label{eq:estimcov_main}
		\begin{pmatrix}
			\widehat{\mathrm{Var}}(Y_{.m}) &
			\widehat{\mathrm{Cov}}(Y_{.m},(Y_{.j'})_{j'\in \mathcal{J}})
		\end{pmatrix}^T:=(\widehat{M}_j)^{-1}\widehat{P}_j,
	\end{equation}
	assuming that $\sigma^2$ tends to zero,
	with $\widehat{M}_j^{-1} \in \mathbb{R}^{(r+1)\times(r+1)}$, $\widehat{P}_j \in \mathbb{R}^{r+1}$ 
	detailed in Appendix \ref{sec:proofvar}. These quantities depend on $(\hat{\alpha}_{j'})_{j'\in\mathcal{J}}$, $\hat{\alpha}_m$ given in Proposition \ref{prop:mean_formula_general}, and on $(\widehat{\mathrm{Var}}(Y_{.j})_{j \in \mathcal{J}}$ and 
	on complete-case coefficients such as $(\hat{\mathcal{B}}^c_{j'\rightarrow m,\mathcal{J}_{-j'}[k]})_{k \in \{m\}\cup\mathcal{J}_{-j'}}$ for $j' \in \mathcal{J}$.
	
	Under the additional Assumptions \ref{hyp3} and \ref{hyp4}, 
	the estimators of the variance of $Y_{.m}$ and its covariances with the pivot variables given in \eqref{eq:estimcov_main} are consistent.
\end{prop}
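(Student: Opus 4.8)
The argument follows the blueprint of Proposition~\ref{prop:mean_formula_general}, but now extracts second-order information from the linear link of Lemma~\ref{lem:PCAlinear}. The key simplification is the standing assumption $\sigma^2\to 0$: in this regime the noise term $\zeta$ in \eqref{eq:linear_jr} vanishes, so for every $j\in\mathcal{J}$ the relation
\[
Y_{.j}=\mathcal{B}_{j\rightarrow m,\mathcal{J}_{-j}[0]}+\sum_{j'\in\mathcal{J}_{-j}}\mathcal{B}_{j\rightarrow m,\mathcal{J}_{-j}[j']}\,Y_{.j'}+\mathcal{B}_{j\rightarrow m,\mathcal{J}_{-j}[m]}\,Y_{.m}
\]
becomes an exact affine identity among the columns of $Y$. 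Since a deterministic identity also holds on the sub-sample $\{\Omega_{.m}=1\}$ --- which is where Assumption~\ref{hyp2} enters, exactly as in the proof of Proposition~\ref{prop:mean_formula_general} --- the complete-case coefficients of Definition~\ref{def:coeff_complete_Case} coincide with the full-data ones, and by Assumption~\ref{hyp3} they are consistently estimable. Write $\mathcal{B}^c_{[k]}:=\mathcal{B}^c_{j\rightarrow m,\mathcal{J}_{-j}[k]}$ for $k\in\{0,m\}\cup\mathcal{J}_{-j}$.

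\textbf{Assembling the linear system.} First, for each $j\in\mathcal{J}$ take the covariance of the identity above with $Y_{.m}$; the intercept drops out and one gets
\[
\mathrm{Cov}(Y_{.j},Y_{.m})=\sum_{j'\in\mathcal{J}_{-j}}\mathcal{B}^c_{[j']}\,\mathrm{Cov}(Y_{.j'},Y_{.m})+\mathcal{B}^c_{[m]}\,\mathrm{Var}(Y_{.m}),
\]
which is $r$ linear equations whose unknowns are $\mathrm{Var}(Y_{.m})$ together with the $r$ covariances $\bigl(\mathrm{Cov}(Y_{.m},Y_{.j'})\bigr)_{j'\in\mathcal{J}}$ and whose coefficients are the complete-case coefficients. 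Second, for the chosen $j$ take the variance of the identity, producing one more equation
\[
\mathrm{Var}(Y_{.j})=\sum_{j',j''\in\mathcal{J}_{-j}}\mathcal{B}^c_{[j']}\mathcal{B}^c_{[j'']}\,\mathrm{Cov}(Y_{.j'},Y_{.j''})+2\,\mathcal{B}^c_{[m]}\sum_{j'\in\mathcal{J}_{-j}}\mathcal{B}^c_{[j']}\,\mathrm{Cov}(Y_{.j'},Y_{.m})+\bigl(\mathcal{B}^c_{[m]}\bigr)^2\mathrm{Var}(Y_{.m}),
\]
in which $\mathrm{Var}(Y_{.j})$ and the within-pivot covariances $\mathrm{Cov}(Y_{.j'},Y_{.j''})$ are known by Assumption~\ref{hyp4}; this is the inhomogeneous equation. (If one instead works with raw second moments $\mathbb{E}[Y_{.j}Y_{.m}]$, the pivot means $(\alpha_{j'})_{j'\in\mathcal{J}}$ and $\alpha_m$ enter through the intercepts $\mathcal{B}^c_{[0]}$; this is the role of $\hat\alpha_m$ from Proposition~\ref{prop:mean_formula_general} in \eqref{eq:estimcov_main}.) Stacking the $r$ covariance equations with the variance equation yields a square system $M_j\,v=P_j$ with $v=\bigl(\mathrm{Var}(Y_{.m}),\mathrm{Cov}(Y_{.m},(Y_{.j'})_{j'\in\mathcal{J}})\bigr)^T\in\mathbb{R}^{r+1}$, where $M_j$ depends only on the complete-case coefficients and $P_j$ only on $\mathrm{Var}(Y_{.j})$, the within-pivot covariances and, through the intercepts, the means. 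One checks that $M_j$ is invertible --- here Assumption~\ref{hyp1} is used: it forces $\mathcal{B}^c_{[m]}\neq 0$ (already needed as a denominator in \eqref{eq:expectation_main}) and rules out a degenerate coefficient pattern --- so $v=M_j^{-1}P_j$ is exactly the population version of \eqref{eq:estimcov_main}.

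\textbf{Consistency.} Replace each ingredient by its empirical counterpart: $\hat{\mathcal{B}}^c$ is consistent by Assumption~\ref{hyp3}; $\hat\alpha_{j'}$, $\widehat{\mathrm{Var}}(Y_{.j})$ and $\widehat{\mathrm{Cov}}(Y_{.j'},Y_{.j''})$ are consistent by Assumption~\ref{hyp4}; and $\hat\alpha_m$ is consistent by Proposition~\ref{prop:mean_formula_general}. Hence $\widehat{M}_j$ and $\widehat{P}_j$ converge in probability to $M_j$ and $P_j$; since $M_j$ is invertible, the map $A\mapsto A^{-1}$ is continuous at $M_j$, and the continuous mapping theorem gives that $(\widehat{M}_j)^{-1}\widehat{P}_j$ converges in probability to $M_j^{-1}P_j$, i.e.\ the estimators of $\mathrm{Var}(Y_{.m})$ and of its covariances with the pivot variables given in \eqref{eq:estimcov_main} are consistent.

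\textbf{Main obstacle.} Two points need care. The first is proving that $M_j$ is invertible for every admissible $B$: one has to track how the invertibility in Assumption~\ref{hyp1} propagates through Lemma~\ref{lem:PCAlinear} to the complete-case coefficients and then to the assembled $(r+1)\times(r+1)$ matrix, excluding pathological configurations. The second is making the $\sigma^2\to 0$ limit rigorous: for $\sigma^2>0$ one has $\mathcal{B}^c\neq\mathcal{B}$ and the noise $\zeta$ (respectively $\zeta^c$) injects $O(\sigma^2)$ corrections into every second moment, so one must argue that these corrections vanish in the limit and justify taking $n\to\infty$ before $\sigma^2\to 0$ (equivalently, continuity of the estimator in $\sigma^2$ at $0$); this is also where Assumption~\ref{hyp2} reconciles conditioning on the $r$ variables indexed by $\{m\}\cup\mathcal{J}_{-j}$ with conditioning on the full row $Y_{i.}$ when $p>r+1$.
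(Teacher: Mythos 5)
Your overall strategy is the paper's: stack, for each pivot $j'\in\mathcal{J}$, a moment equation built from the complete-case regression of $Y_{.j'}$ on $(Y_{.m},(Y_{.k})_{k\in\mathcal{J}_{-j'}})$, plus one variance equation, to get an $(r+1)\times(r+1)$ system in $\bigl(\mathrm{Var}(Y_{.m}),\mathrm{Cov}(Y_{.m},(Y_{.j'})_{j'\in\mathcal{J}})\bigr)$, and conclude consistency by plugging in consistent ingredients (\ref{hyp3}, \ref{hyp4}, Proposition \ref{prop:mean_formula_general}) and inverting. The shape of your $M_j$ indeed matches the paper's.

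The genuine gap is how you connect the complete-case coefficients to unconditional moments when $\sigma^2>0$. You set $\sigma^2=0$ exactly, declare the linear link an exact identity, and assert that the complete-case coefficients of Definition \ref{def:coeff_complete_Case} "coincide with the full-data ones" via \ref{hyp2}; everything then follows by taking variances and covariances of a deterministic relation. But the proposition (and the paper's proof) concerns $\sigma^2>0$, where this coincidence is neither claimed nor needed by the paper: selection on $\Omega_{.m}$ depends on the regressor $Y_{.m}$ and $\zeta^c$ is not exogenous, so one cannot simply restrict the identity to $\{\Omega_{.m}=1\}$. The paper instead conditions on $Z=(Y_{.k})_{k\in\widebar{\{j\}}}$ (all remaining variables, not the $r$ regressors), applies \ref{hyp2} there to replace $\mathbb{E}[Y_{.j}|Z]$ and $\mathrm{Var}(Y_{.j}|Z)$ by their complete-case versions, and then computes explicitly: the law of total variance produces the complete-case residual term $Q^c$ of \eqref{eq:Qcdef}, and the Gaussian identity $\mathbb{E}[\epsilon_{.k}|Z]=\sigma^2(\Gamma_Z)_{k.}(Z-\mathbb{E}[Z])$ in \eqref{eq:epsilonkcond} generates the remainders $o_{\mathrm{var}}(\sigma^2)$ and $o_{\mathrm{cov},k}(\sigma^2)$ in \eqref{eq:ovar} and \eqref{eq:ocov}, yielding the exact system \eqref{eq:vartoy}--\eqref{eq:cov2toyproof} of which \eqref{eq:estimcov_main} drops only the $\mathcal{O}(\sigma^2)$ part. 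This is precisely the content you defer to your "main obstacle" paragraph without carrying it out, so the core technical work of the proof is missing. Two further mismatches follow from your shortcut: your right-hand side omits $Q^c$ (which is $O(\sigma^2)$ but is retained, and estimated on the complete case, in the paper's $\widehat{P}_j$), and you cancel the mean terms using the identity behind Proposition \ref{prop:mean_formula_general}, whereas the paper keeps them (this is exactly why $\hat{\alpha}_m$ appears in $\widehat{P}_j$); so your estimator is not literally the one in \eqref{eq:estimcov_main}. Finally, you promise that \ref{hyp1} forces invertibility of $M_j$; the paper does not prove this and simply assumes $M_j$ invertible, and your "one checks" does not supply the argument either.
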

The proof is given in Appendix \ref{sec:proofvar}. Note that to estimate the variance of a MNAR variable, only $r$ pivot variables are required  to solve \eqref{eq:estimcov_main} and $r$ tasks have to be performed for estimating the coefficients of the effects of $Y_{.k}$ on $(Y_{.\ell})_{\ell \in \{m\}\cup\mathcal{J}_{-k}}$ for all $k \in \mathcal{J}$. 

All the ingredients can be combined to form an estimator $\hat{\Sigma}$ for the covariance matrix \eqref{eq:model}. Define
\begin{equation}\label{eq:cov_matrix_general}
    \hat{\Sigma} := \left(\widehat{\mathrm{Cov}}(Y_{.k},Y_{.\ell})\right)_{k,\ell \in \{1,\dots,p\}},
\end{equation}
where
\begin{itemize}
	\item if $Y_{.k}$ and  $Y_{.\ell}$ have both consistent mean/variance estimators, then $\widehat{\mathrm{Cov}}(Y_{.k},Y_{.\ell})$ can be trivially evaluated by standard empirical covariance estimators.
	\item if $Y_{.k}$ is a MNAR variable and $Y_{.\ell}$ is a pivot variable, then $\widehat{\mathrm{Cov}}(Y_{.k},Y_{.\ell})$ is given by \eqref{eq:estimcov_main},
 	\item if $Y_{.k}$ is a MNAR variables and $Y_{.\ell}$ is not a pivot variable, \textit{i.e.} $\ell \in \widebar{\mathcal{J}}\setminus \{k\}$, a similar strategy as the one above can be devised. Then $\widehat{\mathrm{Cov}}(Y_{.k},Y_{.\ell})$ is given by \eqref{eq:covmissvar} detailed in Appendix \ref{sec:proofcov2} and for which some additional assumptions similar as the ones above are required.
	This estimator relies on the choice of $r-1$ pivot variables indexed by $j$ and $\mathcal{H}\subset\mathcal{J}$, and only necessitates to evaluate the effects of $Y_{.j}$ on $(Y_{.j'})_{j' \in \{k,\ell\}\cup\mathcal{H}}$ in the complete case.
\end{itemize}

\subsection{Performing PPCA with MNAR variables}

With the estimator $\hat{\Sigma}$ given in \eqref{eq:cov_matrix_general}, one performs the estimation of the loading matrix $B$ in \eqref{eq:model}.
\begin{Def}[Estimation of the loading matrix]\label{def:estPPCAcoeff}
	Given the estimator $\hat{\Sigma}$ of the covariance matrix in \eqref{eq:cov_matrix_general}, let the orthogonal matrix $\hat{U} = (\hat{u}_1 | \hdots | \hat{u}_p) \in \mathbb{R}^{p\times p}$ and the diagonal matrix $\hat{D} = \mathrm{diag}(\hat{d}_1, \hat{d}_2, \hdots, \hat{d}_p)\in \mathbb{R}^{p\times p}$ with $d_1\geq d_2 \geq \hdots \geq d_p \geq 0$ form the singular value decomposition of the following matrix $\hat{\Sigma} - \sigma^2 \mathrm{Id}_{p\times p} =: \hat{U} \hat{D} \hat{U}^T.$
	An estimator $\hat{B}$ of $B$ can be defined using the $r$ first singular values and vectors, as follows
	\begin{align}
		\label{eq:Bhat}
		\hat{B} =  \hat{D}_{|r}^{1/2} \hat{U}_{|r}^T = \mathrm{diag}({\hat{d}_1},\dots,{\hat{d}_r})^{1/2} (\hat{u}_1^T | \hdots | \hat{u}_r^T)^T
	\end{align}
\end{Def}
The estimation of the loading matrix is used to impute the variables with missing values. 
More precisely, a classical strategy to impute missing values is to estimate their conditional expectation given the observed values. 
One can note that with  $\Sigma=B^TB+\sigma^2\mathrm{Id}_{p\times p}$, the conditional expectation of $Y_{.m}$ for $m\in \mathcal{M}$ given $(Y_{.k})_{k\in \widebar{\mathcal{M}}}$ 
 reads as follows
$$\mathbb{E}[Y_{.m}|(Y_{.k})_{k\in \widebar{\mathcal{M}}}]=
{\alpha}_m 
+ \Sigma_{m,\widebar{\mathcal{M}}} \Sigma_{\widebar{\mathcal{M}},\widebar{\mathcal{M}}}^{-1} 
\left( Y_{. \widebar{\mathcal{M}}}^T-\alpha_{\widebar{\mathcal{M}}} \right), 
$$
with $\Sigma_{m,\widebar{\mathcal{M}}}:=(\Sigma_{m,k})_{k\in \widebar{\mathcal{M}}}^T$, $\Sigma_{\widebar{\mathcal{M}},\widebar{\mathcal{M}}} :=(\Sigma_{k,k'})_{k,k'\in \widebar{\mathcal{M}}}$, $Y_{. \widebar{\mathcal{M}}} := (Y_{.k})_{k\in \widebar{\mathcal{M}}}$, and $\alpha_{\widebar{\mathcal{M}}} := (\alpha_{k})_{k\in \widebar{\mathcal{M}}}$.

\begin{Def}[Imputation of a MNAR variable]
Set $\hat{\Gamma}:=\hat{B}^T\hat{B}+\sigma^2\mathrm{Id}_{p\times p}$ for $\hat{B}$ given in Definition \ref{def:estPPCAcoeff}. The MNAR variable $Y_{.m}$ with $m\in \mathcal{M}$ can be imputed as follows: for $i$ such that $\Omega_{i,m}=0$,
\begin{align}
    \label{eq:imputation_MNAR_var}
    \hat{Y}_{im} &= \hat{\alpha}_m + \hat{\Gamma}_{m,\widebar{\mathcal{M}}}
		    \hat{\Gamma}_{\widebar{\mathcal{M}},\widebar{\mathcal{M}}}^{-1} 
		    \left(Y_{i,\widebar{\mathcal{M}}}^T - \hat{\alpha}_{\widebar{\mathcal{M}}} \right)
\end{align}
with $\hat{\Gamma}_{m,\widebar{\mathcal{M}}}:=(\hat{\Gamma}_{m,k})_{k\in \widebar{\mathcal{M}}}^T$, $\hat{\Gamma}_{\widebar{\mathcal{M}},\widebar{\mathcal{M}}} :=(\hat{\Gamma}_{k,k'})_{k,k'\in \widebar{\mathcal{M}}}$, $Y_{. \widebar{\mathcal{M}}} := (Y_{.k})_{k\in \widebar{\mathcal{M}}}$ and $\hat{\alpha}_{\widebar{\mathcal{M}}} := (\hat{\alpha}_{k})_{k\in \widebar{\mathcal{M}}}$. 
\end{Def}

\subsection{Algorithm}\label{sec:algo}


The proposed imputation method is described in Algorithm \ref{alg:imputation} and can handle different MNAR mechanisms: self-masked MNAR case but also cases where the probability to have missing values on variables depends on both the underlying values and values of other variables (observed or missing). 

\setlength{\columnseprule}{1pt}

\begin{algorithm}[H]
    \begin{algorithmic}[1]
    \REQUIRE $r$ (number of latent variables), $\sigma^2$ (noise level), $\mathcal{J}$ (pivot variables indices), $\Omega$ (mask).
    \end{algorithmic}
    \vspace{-0.3cm}
    \begin{multicols}{2}
	\begin{algorithmic}[1]
		\FOR{each MNAR variable $(Y_{.m})_{m\in \mathcal{M}}$}
		\STATE Evaluate $\hat{\alpha}_m$ the estimator of its mean given in \eqref{eq:expectation_main} using the $r$ pivot variables indexed by $\mathcal{J}$.
 		\STATE Evaluate $\widehat{\mathrm{Var}}(Y_{.m})$, and $\widehat{\mathrm{Cov}}(Y_{.m},Y_{.\ell})$ for $\ell \in \mathcal{J}$, using \eqref{eq:estimcov_main}. 
 		\STATE Evaluate $\widehat{\mathrm{Cov}}(Y_{.m},Y_{.\ell})$ for $\ell\in\widebar{\mathcal{J}}\setminus \{m\}$ using  Proposition \ref{prop:covmiss}. 
		\ENDFOR
		\columnbreak
		\STATE Form $\hat{\Sigma}$, covariance matrix estimator in \eqref{eq:cov_matrix_general}.\STATE Compute the loading matrix estimator $\hat{B}$ given in \eqref{eq:Bhat}.
		\STATE Compute 
		$\hat{\Gamma}=\hat{B}^T\hat{B}+\sigma^2\mathrm{Id}_{p\times p}.$
		\FOR{each missing variable $(Y_{.j})$}
		    \FOR{$i$ such that $\Omega_{ij}=0$}
		    \STATE $\hat{Y}_{ij} \leftarrow$ Impute $Y_{ij}$ as in \eqref{eq:imputation_MNAR_var}.
		    \ENDFOR
		 \ENDFOR
	\end{algorithmic}
	\end{multicols}
	\vspace{-0.3cm}
	\caption{\label{alg:imputation} PPCA with MNAR variables.}
\end{algorithm}


Algorithm \ref{alg:imputation} requires the set $\mathcal{J}$, \textit{i.e.} the selection of $r$ pivot variables on which the regressions in Propositions \ref{prop:mean_formula_general}, \ref{prop:var_formula_general} and \ref{prop:covmiss} will be performed.  If there are more than $r$ variables that can be pivot, the final estimator is provided by computing the median of the estimators over all possible combinations of $r$ pivot variables. 
In addition, in order to estimate the coefficients in Definition \ref{def:coeff_complete_Case}, we use  ordinary least squares despite that the exogeneity assumption, \textit{i.e.} the noise term is independent of the covariates, does not hold. It still leads to accurate estimation in numerical experiments as shown in Section \ref{sec:simu}. 


\section{Numerical experiments}\label{sec:simu}
\subsection{Synthetic data}
\label{sec:simu_synthdata}
We empirically compare Algorithm \ref{alg:imputation} (\textbf{MNAR}) to the state-of-the-art methods, including 
\begin{enumerate}[label={(\roman*)},topsep=0pt]
\item \textbf{MAR}\label{methMAR} our method which has been adapted to handle MAR data (inspired by \cite[Theorems 1, 2, 3]{mohan2018estimation} in linear models),  see Appendix \ref{sec:MARcase} for details;
\item \textbf{EMMAR}\label{methEMMAR}:  EM algorithm to perform PPCA with MAR values \citep{ilin2010practical};
\item \textbf{SoftMAR}\label{methSoft}: matrix completion  using iterative soft-thresholding singular value decomposition algorithm  \citep{mazumder2010spectral} relevant only for M(C)AR values;
\item \textbf{MNARparam}\label{methparam}: matrix completion technique modeling the MNAR mechanism with a  parametric logistic model  \citep{sportisse2018imputation}. 
\end{enumerate}
Note that method \ref{methEMMAR} is specially designed to estimate the PPCA loading matrix  and not to perform imputation, but this is possible combining Method \ref{methEMMAR} with steps 8 and 9 in Algorithm \ref{alg:imputation}. This is the other way around for completion methods \ref{methSoft} and \ref{methparam}, but the loading matrix can be computed as in \eqref{eq:Bhat}. Note also that methods \ref{methSoft} and \ref{methparam} are developed in a context of fixed effects low-rank models. 
They require tuning a regularization parameter $\lambda$; we consider an oracle value minimizing the true prediction error. We also use oracle values for the noise level and the rank in Algorithm \ref{alg:imputation}. These methods are compared with the imputation by the mean (\textbf{Mean}), 
which serves as a benchmark, and the naive listwise deletion method (\textbf{Del}) which consists in estimating the parameters empirically with the fully-observed data only. 

\paragraph{Measuring the performance} For the loading matrix, the RV coefficient \citep{josse2008testing}, which is a measure of relationship between two random vectors, between the estimate $\hat{B}$ and the true $B$ is computed.
An RV coefficient close to one means high correlation between the image spaces of  $\hat{B}$ and $B$. Denoted the Frobenius norm as $\|.\|_F$, the quality of imputation  is measured with the normalized prediction error given by $\| (\hat{Y}-Y) \odot (1-\Omega)\|^2_F/\left\| Y \odot (1-\Omega) \right\|_F^2$.
A discussion on computational times can be found in Appendix \ref{sec:comp_time}.


\subparagraph{Setting}
\setlength{\columnseprule}{0pt}
\begin{multicols}{2}
 We generate a data matrix of size $n=1000$ and $p=10$  from a  PPCA model \eqref{eq:model} with two latent variables ($r=2$) and with a noise level $\sigma=0.1$. Missing values are introduced on seven variables $(Y_{.k})_{k \in [1:7]}$ 
 according to a logistic self-masked MNAR mechanism, leading to $35\%$ of missing values in total. Results are presented for one missing variable (same results hold  for other missing variables). All the observed variables $(Y_{.k})_{k \in [8:10]}$ 
 are considered to be pivot. Figure \ref{fig:MeanVariancesY1} shows that Algorithms \ref{alg:imputation} is the only one which always gives unbiased estimators of the mean, variance and associated covariances of $Y_{.1}$. As expected, the listwise deletion method provides biased estimates inasmuch as the observed sample is not representative of the population with MNAR data. Method \ref{methEMMAR}, specifically designed for PPCA models but assuming MAR missing values, provides biased estimators. Method \ref{methparam} improves on the benchmark mean imputation and on Method \ref{methSoft} as well, as it explicitly takes into account the MNAR  mechanism, but it still leads to biased estimates probably because of the fixed effect model assumption.
  \columnbreak
\begin{figure}[H]
\vspace{-0.8cm}
\hspace{-0.3cm}
\includegraphics[width=0.5\textwidth]{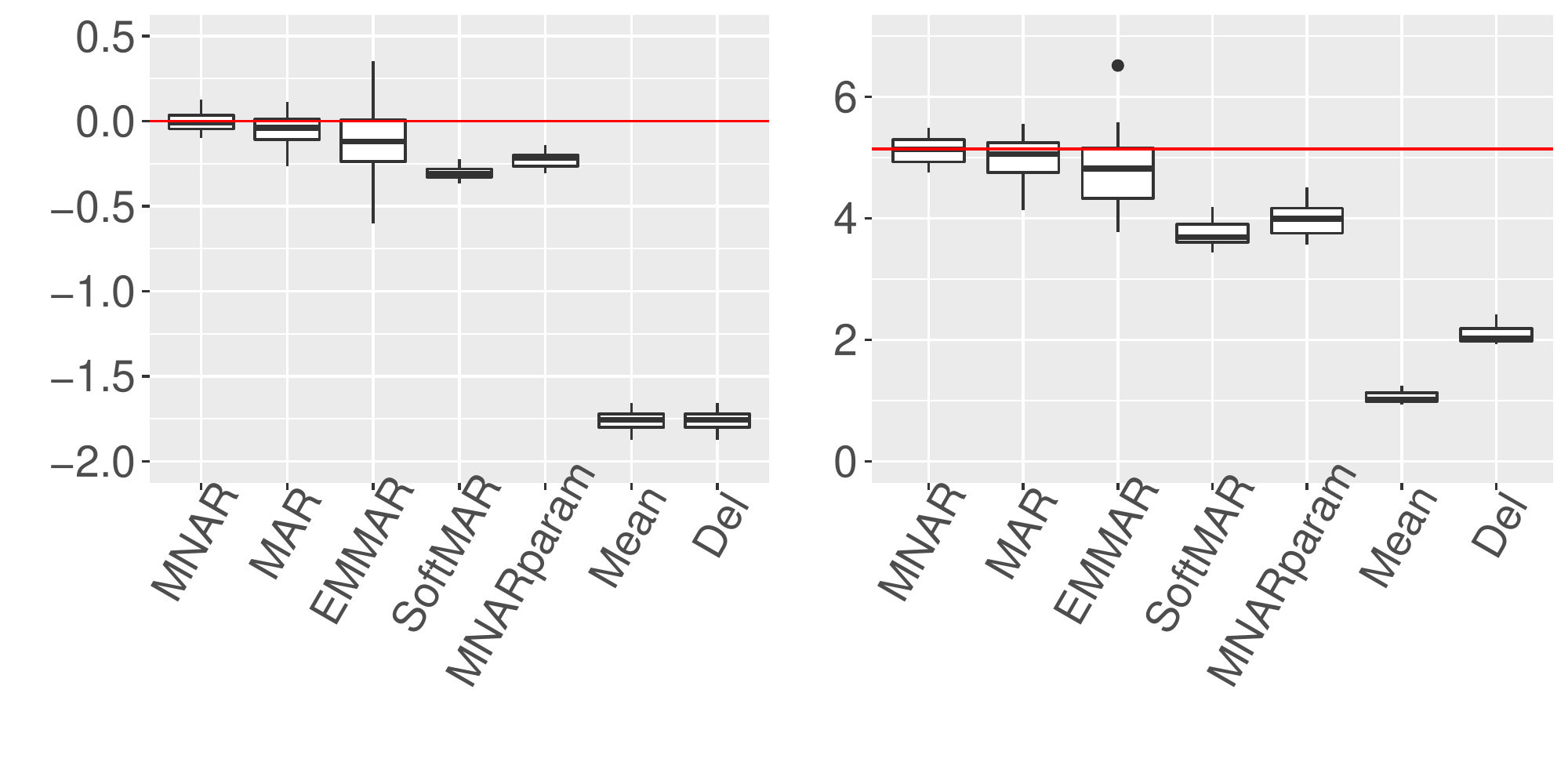}
\par\vspace{-0.25cm}
\hspace{-0.3cm}
\includegraphics[width=0.5\textwidth]{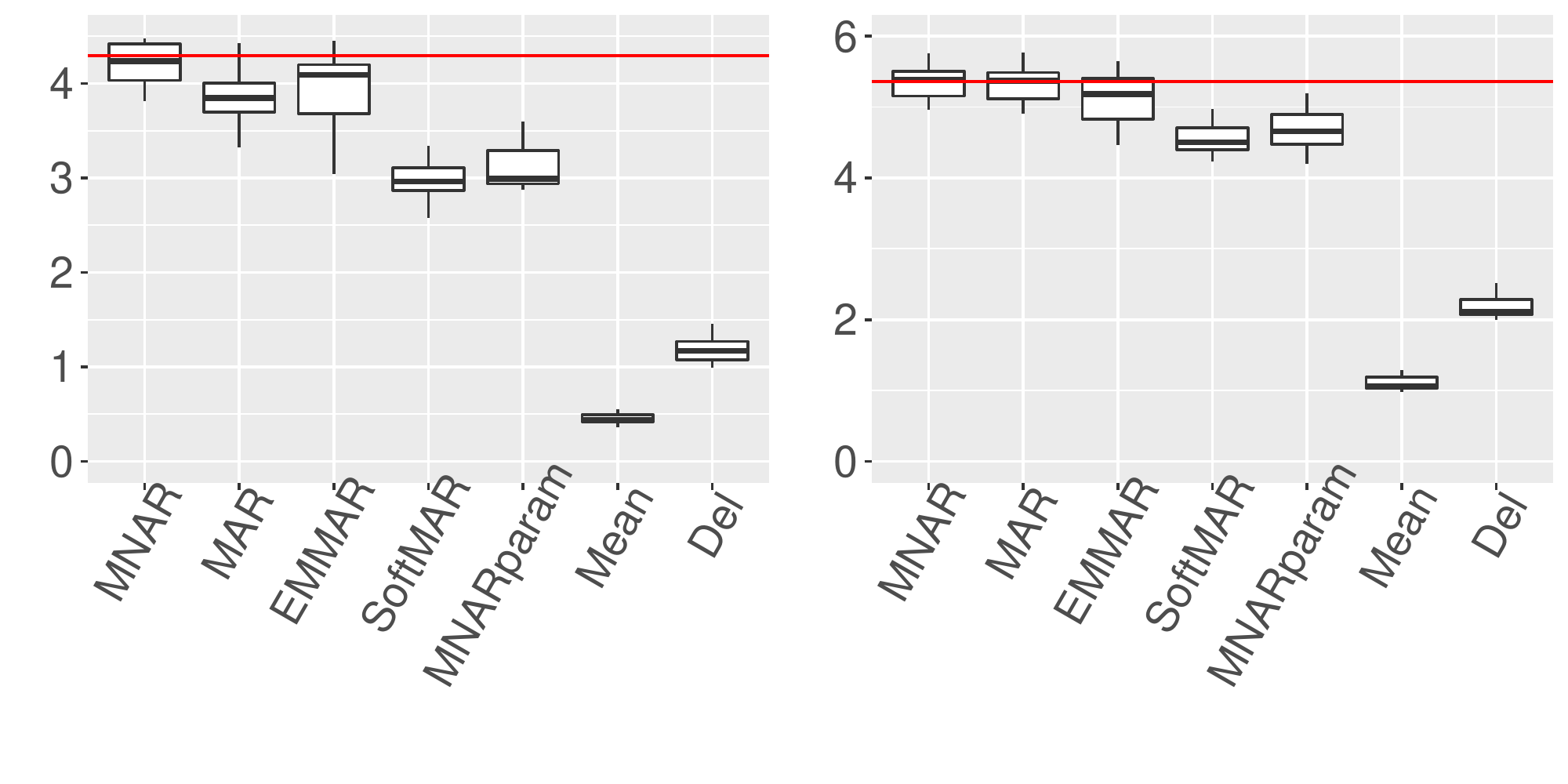}
\vspace{-0.8cm}
\caption{\label{fig:MeanVariancesY1} Mean and variance estimations of the missing variable $Y_{.1}$ (top left and right graphics) and  covariances estimations (bottom graphics) of $\textrm{Cov}(Y_{.1},Y_{.2})$ (\textit{i.e.} covariance between two missing variables) and of $\textrm{Cov}(Y_{.1},Y_{.8})$ (\textit{i.e.} between one missing variable and one pivot variable). True values  are indicated by red lines.}
\end{figure}
\end{multicols}

\setlength{\columnseprule}{0pt}
\begin{multicols}{2}
Figure \ref{fig:MSECorrY1} shows that Algorithm \ref{alg:imputation} gives the best estimate of the loading matrix and the smallest imputation error.  
Biases in estimation results being lower, Method \ref{methMAR}, based on same arguments as Algorithm \ref{alg:imputation} but considering MAR data, may be considered as a second choice for this low-dimensional example (yet not in higher dimension, see Appendix \ref{sec:othernumexp}). 

\begin{figure}[H]
    \captionsetup[subfigure]{labelformat=empty}
	\begin{subfigure}[H]{0.3\textwidth}
		\includegraphics[width=1\textwidth]{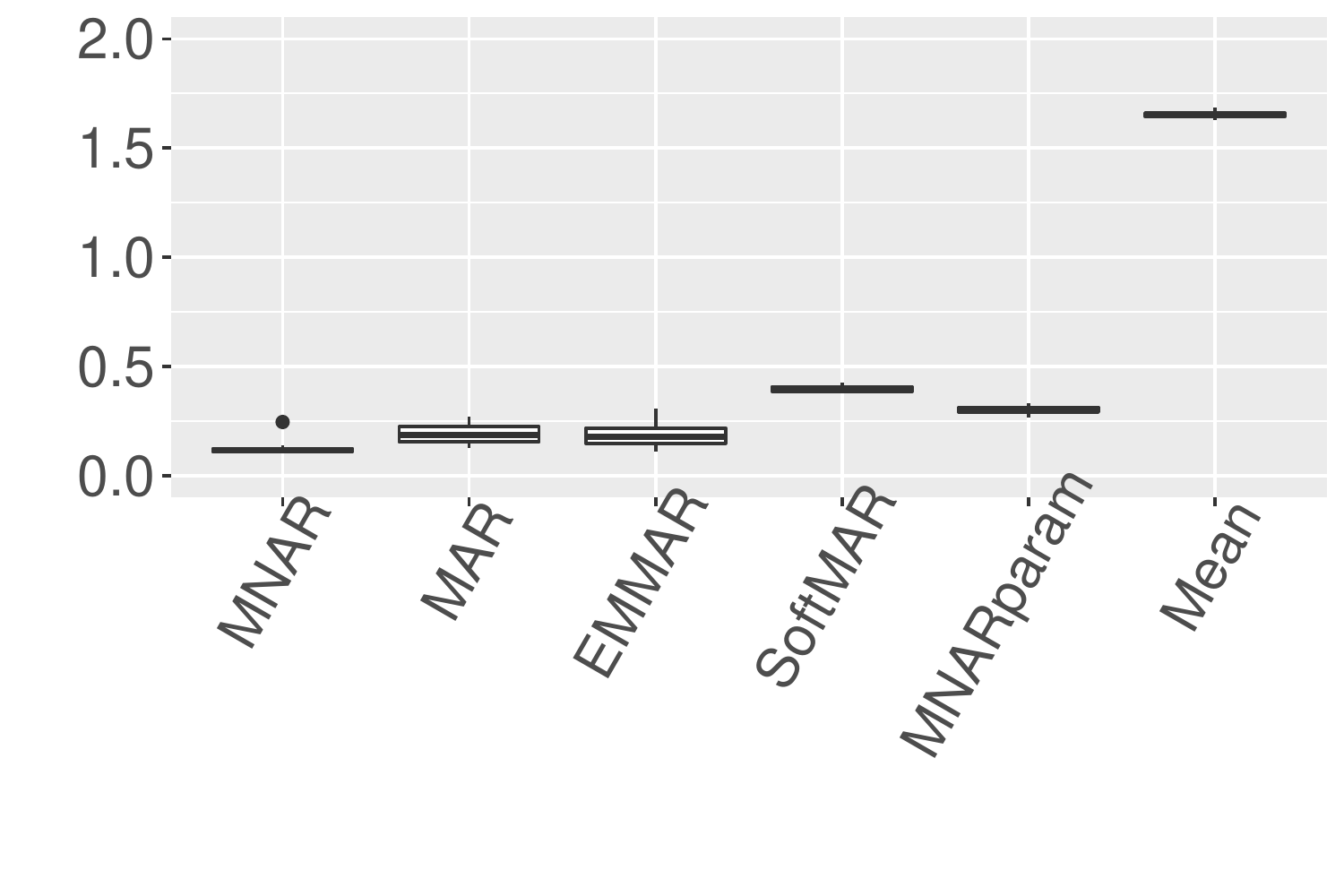}
		\caption{}
	\end{subfigure}
	\hspace{0.05cm}
	\begin{subfigure}[H]{0.165\textwidth}
	\vspace{-0.8cm}
	{\scriptsize
	\begin{tabular}{c|c}
		& \textrm{RV} \\
		\textrm{MNAR} & {$0.997$} \\
		\textrm{MAR} & $0.993$ \\
		\textrm{EMMAR} &  $ 0.988 $\\
		\textrm{SoftMAR} &   $0.986$ \\
		\textrm{Mean} &  $0.593$
	\end{tabular}
	}
		\caption{}
	\end{subfigure}
	\vspace{-1.2cm}
	\caption{\label{fig:MSECorrY1}Prediction error (left) and median of the RV coefficients for the loading matrix (right).}
\end{figure}
\end{multicols}
\vspace{-0.4cm}
In Appendix \ref{sec:othernumexp}, we report further simulation results, where we vary the features dimension ($p=50$), the rank ($r=5$),  the missing values mechanism using probit self-masking and also multivariate MNAR (when the probability to be missing for a variable depends on its underlying values and on values of other variables that can be missing) and the percentage of missing values (10\%, 50\%). 
The results obtained on the simulations presented before are representative of other results obtained with different number of variables, ranks and mechanisms. Besides, as expected, all the methods deteriorate with an increasing percentage of missing values but our method is stable. 

We also  assess the robustness of the methods in terms of noise,  model misspecification (assuming a fixed effect model) and we evaluated the impact of underestimating or overestimating the  number $r$ of latent variables.
When we increase the level of noise, our method is very robust in terms of mean and variance estimations, and despite a bias for some covariances estimation for large noise it outperforms competitors regarding  the prediction error.  When data are simulated according to a low-rank fixed effect model, Algorithm \ref{alg:imputation} provides results close to the ones of Method \ref{methparam} which is specifically developed for this model and MNAR data. Moreover, it turns out that the procedure remains stable at a wrong specification of the number of latent variables $r$. These extensive simulations highlight that our approach is stable to model misspecifications of the PPCA assumption. 

\subsection{Application to clinical data}
\label{sec:simu_realdata}
We illustrate our method on the TraumaBase$^{\mbox{\normalsize{\textregistered}}}$ dataset containing the clinical measurements of 3159 patients with brain trauma injury.
Nine quantitative variables, selected by doctors, contain from 1 to 30\% missing values, leading to 11\% in the whole dataset. After discussion with doctors, some variables can be considered to have informative missing values, such as the variable \textit{HR.ph}, which denotes the heart rate. 
Indeed, when the patient’s condition is too critical and therefore his heart rate is either high or low, the heart rate may not be measured, as doctors prefer to provide emergency care. 
Both percentage and nature of missing data demonstrate the importance of taking appropriate account of missing data. More information on the data can be found in Appendix \ref{sec:appdata}.

\vspace{-0.15cm}
\paragraph{Imputation performances}

To assess the quality of our method, we introduce additional MNAR values in the variable \textit{HR.ph} (which has an initial missing rate of 1\%) using a logistic self-masked mechanism 
leading to 50\% missing values.
The other variables are considered M(C)AR. 
\vspace{-0.15cm}
\begin{multicols}{2}
The noise level is estimated using the mean of the last eigenvalues \citep{josse2016denoiser} and the rank of $Y$ is estimated using cross-validation \citep{josse2012selecting}. Both quantities are estimated using the complete-case analysis (1862 observations).  In Figure \ref{fig:MSERealData}, 
the prediction error is relative to the error of the benchmark imputation by the mean.
Algorithm \ref{alg:imputation} gives significantly smaller prediction error than other methods. 
\begin{figure}[H]
\vspace{0.05cm}
\centering
\includegraphics[width=0.4\textwidth]{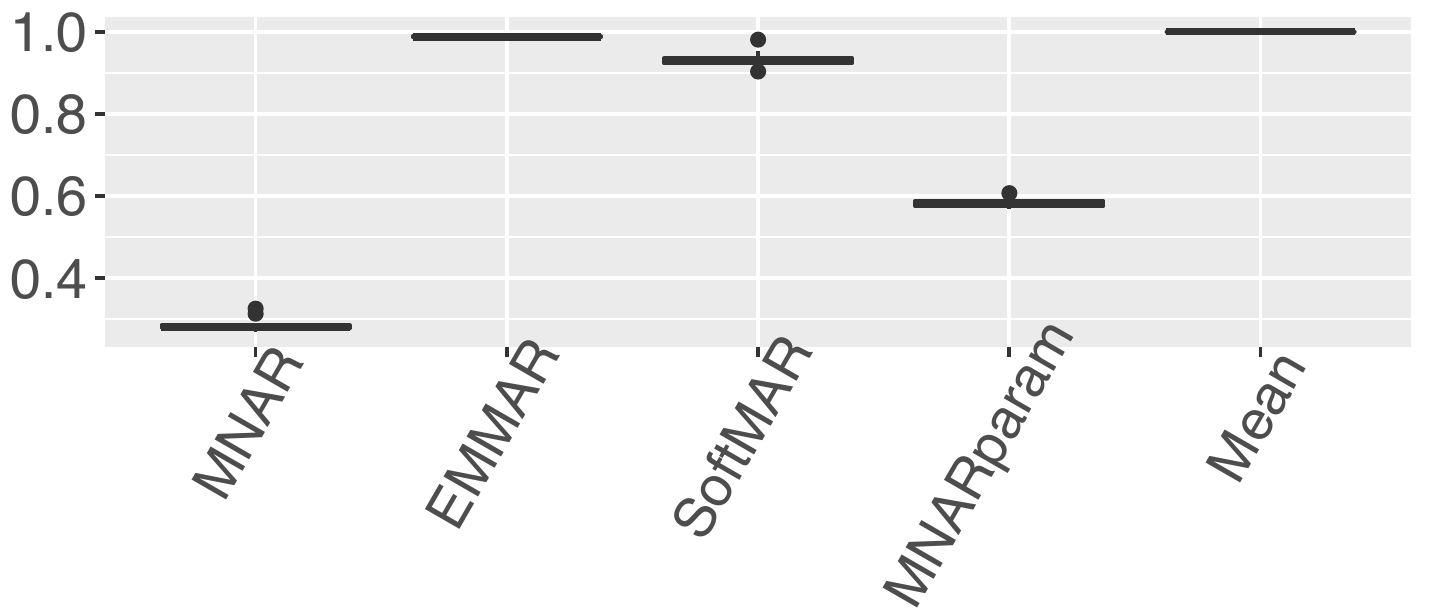}
\vspace{-0.3cm}
\caption{\label{fig:MSERealData} Comparison of the prediction error over 10 replications for the TraumaBase data.}
\end{figure}
\end{multicols}


\vspace{-0.5cm}

\section*{Conclusion}
\addcontentsline{toc}{section}{Conclusion}
\vspace{-0.1cm}
In this work, we propose a new estimation and imputation method to perform PPCA with MNAR data (possibly coupled with M(C)AR data), without any need of modeling the missing mechanism. 
This comes with strong theoretical guarantees as identifiability and consistency, but also with an efficient algorithm.
Estimating the rank in the PPCA setting with MNAR data remains non trivial. Once the number of latent variables is estimated, the noise variance can be estimated. A cross-validation strategy by additionally adding some MNAR values is a first solution, but this definitely requires further research.
Another ambitious prospect would be to extend work to the exponential family to process count data, for example, which is prevalent in many application fields such as genomics.

\bibliographystyle{plainnat}
\bibliography{biblio}

\appendix
\section{Proof of Proposition \ref{prop:identifiability}}
\label{sec:proofidentif}

For the sake of readability, we first present the proof of Proposition \ref{prop:identifiability} in the case of the toy example presented in Section \ref{sec:toyexample} with  $p=3$ and $r=2$. The proof in the general setting follows.

\subsection{Proof of Proposition \ref{prop:identifiability} in the case of the toy example presented in Section \ref{sec:toyexample}}

Consider the setting of the toy example presented in Section \ref{sec:toyexample} with  $p=3$ and $r=2$. The PPCA model in \eqref{eq:model} reads
$$
\left\{
\begin{array}{ll}
Y&=\begin{pmatrix} 
Y_{1} & Y_{2} & Y_{3}
\end{pmatrix}=\begin{pmatrix} \alpha_1 & \alpha_2 & \alpha_3 \end{pmatrix} + \begin{pmatrix}W_{1} & W_{2}\end{pmatrix}B + \epsilon, \\
Y &\sim \mathcal{N}(\alpha,\Sigma), \: \Sigma=B^TB+\sigma^2 I.
\end{array}
\right.
$$
$Y_2$ and $Y_3$ are assumed to be observed and $Y_{1}$ is self-masked MNAR, \textit{i.e.}
\begin{equation}\label{eq:omega1}
\mathbb{P}(\Omega_{1}=1|Y_{1},Y_{2},Y_{3};\phi_1)=\mathbb{P}(\Omega_{1}=1|Y_{1};\phi_1)=F_1(\phi_1^0+\phi_1^1 y_{1}),
\end{equation}
where $F_1$ is strictly monotone with a positive finite support.

\begin{proof}
Assume that $(Y,\Omega)$ and $(Y',\Omega')$ have distributions respectively parameterized by $(\alpha,\Sigma, \phi_1)$ and $(\alpha',\Sigma', \phi_1')$.
Assume that $Y$ and $Y'$ have the same observed distribution, \textit{i.e.}
\begin{align}
\label{eq:id_lawequality_toy}
 \mathcal{L}(Y_1,\Omega_{1}=1;\alpha_1,\Sigma_{11},\phi_1)&=\mathcal{L}(Y'_{1},\Omega'_{1}=1;\alpha'_1,\Sigma'_{11},\phi'_1) \\
\label{eq:id_lawequality_twovar_toy}
  \mathcal{L}(Y_1,Y_j,\Omega_{1}=1;\alpha_1,\alpha_j,\Sigma_{(1j)},\phi_1)&=\mathcal{L}(Y'_{1},Y'_{j},\Omega'_{1}=1;\alpha'_1,\alpha'_j,\Sigma'_{(1j)},\phi'_1) \qquad j \in \{2,3\},
\end{align}
where $\Sigma_{(1j)}$ is the covariance matrix $\begin{pmatrix}
\Sigma_{11} & \Sigma_{1j} \\
\Sigma_{1j} & \Sigma_{jj}
\end{pmatrix}$. 
In order to show that parameters identifiability holds, we need to show that \eqref{eq:id_lawequality_toy} and \eqref{eq:id_lawequality_twovar_toy} imply that $\alpha=\alpha'$, $\Sigma=\Sigma'$ and $\phi_1=\phi_1'$. Then, under a known noise level $\sigma^2$, we prove that $B$ and $B'$ are equal up to a row permutation. 

As $(Y_{2},Y_{3})$ and $(Y'_{2},Y'_{3})$ are fully observed, the parameters of the  distributions $\mathcal{L}(Y_{2})$, $\mathcal{L}(Y'_{2})$,  $\mathcal{L}(Y_{3})$, $\mathcal{L}(Y'_{3})$, $\mathcal{L}(Y_{2},Y_{3})$ and $\mathcal{L}(Y'_{2},Y'_{3})$ are identifiable. 
It trivially implies that $\alpha_2=\alpha'_2$, $\Sigma_{22}=\Sigma'_{22}$, $\alpha_3=\alpha'_3$, $\Sigma_{33}=\Sigma'_{33}$ and $\Sigma_{23}=\Sigma'_{23}$.

\subparagraph{Identifiability of the MNAR variable variance}

Equation \eqref{eq:id_lawequality_toy} can be rewritten in terms of density function as follows
$$f_{Y_{1},\Omega_{1}=1}(y_1;\alpha_1,\Sigma_{11},\phi_1)=f_{Y'_{1},\Omega'_{1}=1}(y_1;\alpha'_1,\Sigma'_{11},\phi'_1) \qquad \forall y_1 \in \mathbb{R}.$$
Given the missing mechanism in \eqref{eq:omega1} and that $Y_{.1} \sim \mathcal{N}(\alpha_1,\Sigma_{11})$,  \cite[Theorem 1 a)]{miao2016identifiability} ensures that $\Sigma_{11}=\Sigma'_{11}$.

\subparagraph{Identifiability of the Mean and the MNAR mechanism parameter}

Using \eqref{eq:id_lawequality_toy} and \eqref{eq:id_lawequality_twovar_toy}, the previous computations entail that
$$\mathcal{L}(Y_{2}|Y_{1},\Omega_{1}=1;\alpha_1,\alpha_2,\Sigma_{(12)},\phi_1)
=\mathcal{L}(Y'_2|Y'_{1},\Omega'_{1}=1;\alpha'_1,\alpha'_2,\Sigma'_{(12)},\phi'_1),$$
noting that $$f_{Y_{2}|Y_{1}=y_1,\Omega_{1}=1}(y_2;\alpha_1,\alpha_2,\Sigma_{(12)},\phi_1)=\frac{f_{Y_{1},Y_{2},\Omega_{1}=1}(y_1,y_2;\alpha_1,\alpha_2,\Sigma_{(12)},\phi_1)}{f_{Y_1,\Omega_{1}=1}(y_1;\alpha_1,\Sigma_{11},\phi_1)} \qquad \forall (y_1,y_2) \in \mathbb{R}^2$$

One obtains
\begin{multline*}
\frac{\mathbb{P}(\Omega_{1}=1|Y_1 =y_1,Y_2=y_2;\phi_1)f_{Y_{2}|Y_{1}=y_1}(y_2;\alpha_1,\alpha_2,\Sigma_{(12)})}{\mathbb{P}(\Omega_{1}=1|Y_{1}=y_{1};\phi_1)} \\
=\frac{\mathbb{P}(\Omega'_{.1}=1|Y'_1 =y_1,Y'_2=y_2;\phi'_1)f_{Y'_{2}|Y'_{1}=y_1}(y_2;\alpha'_1,\alpha'_2,\Sigma'_{(12)})}{\mathbb{P}(\Omega'_{1}=1|Y_{1}=y_{1};\phi'_1)} \qquad \forall (y_1,y_2) \in \mathbb{R}^2
\end{multline*}

Yet, 
\begin{align}
\nonumber
\mathbb{P}(\Omega_{1}=1|Y_{1} =y_1,Y_{2} =y_2;\phi_1)&=\mathbb{E}[\mathbb{E}[\mathrm{1}_{\Omega_{1}=1}|Y_{1} =y_1,Y_{2} =y_2,Y_{3}=y_3;\phi_1]|Y_{1} =y_1,Y_{2} =y_2] \\
\nonumber
&=\mathbb{E}[\mathbb{P}(\Omega_{1}=1|Y =y;\phi_1)|Y_{1} =y_1,Y_{2} =y_2] \\
\nonumber
&=\mathbb{E}[\mathbb{P}(\Omega_{1}=1|Y_1 =y_1;\phi_1)|Y_{1} =y_1,Y_{2} =y_2] \\
\label{eq:tiptoy}
&=\mathbb{P}(\Omega_{1}=1|Y =y_1;\phi_1)
\end{align}
by measurability. It implies for all $y_1\in \mathbb{R}$ and $y_2\in \mathbb{R}$
$$f_{Y_{2}|Y_{1}=y_1}(y_2;\alpha_1,\alpha_2,\Sigma_{(12)})
=f_{Y'_{2}|Y'_{1}=y_1}(y_2;\alpha'_1,\alpha'_2,\Sigma'_{(12)})$$
which leads to the equality of the conditional expectations and variances associated to the above densities:
\begin{align*}
\alpha_{2}+\Sigma_{12} \Sigma_{11}^{-1}(\alpha_1-y_1)&=\alpha_{2}+\Sigma'_{12} \Sigma_{11}^{-1}(\alpha'_1-y_1) \qquad \forall y_1 \in \mathbb{R}\\
\Sigma_{22}-\Sigma_{12}^2\Sigma_{11}^{-1}&=\Sigma_{22}-(\Sigma'_{12})^2\Sigma_{11}^{-1}.
\end{align*}

It implies that
\begin{align}
\label{eq:sigma12toy}
\Sigma_{12}^2=(\Sigma'_{12})^2 &\Longrightarrow |\Sigma_{12}|=|\Sigma'_{12}|  \\
\label{eq:sigma12alphatoy}
\frac{\Sigma_{21}}{\Sigma'_{21}}=\frac{(\alpha'_1-y_1)}{(\alpha_1-y_1)} &\Longrightarrow |\alpha_1-y_1|=|\alpha'_1-y_1| \qquad \forall y_1 \in \mathbb{R}
\end{align}

Equation \eqref{eq:sigma12alphatoy} implies that $\alpha_1=\alpha'_1$, since for $y_1=\alpha_1'$, one has $\alpha_1-\alpha'_1=0$.


Using \eqref{eq:id_lawequality_twovar_toy}, one has
\begin{multline}\label{eq:Y1Y2toy}
\mathbb{P}(\Omega_{1}=1|Y_{1} =y_1,Y_{2} =y_2;\phi_1)f_{(Y_{1},Y_{2})}(y_1,y_2;\alpha_1,\alpha_2,\Sigma_{(12)}) \\
=\mathbb{P}(\Omega'_{1}=1|Y'_{1} =y_1,Y'_{2}=y_2;\phi'_1)f_{(Y'_{1},Y'_{2})}(y_1,y_2;\alpha'_1,\alpha'_2,\Sigma'_{(12)}) \qquad \forall (y_1,y_2) \in \mathbb{R}^2
\end{multline}
Using \eqref{eq:tiptoy}, 
\begin{align*}
\frac{\exp{\left(-\frac{1}{2}\begin{pmatrix}y_1-\alpha_1 & y_2-\alpha_2\end{pmatrix}\Sigma^{-1}_{(12)}\begin{pmatrix}y_1-\alpha_1 \\ y_2-\alpha_2\end{pmatrix}\right)}}{\exp{\left(-\frac{1}{2}\begin{pmatrix}y_1-\alpha_1 & y_2-\alpha_2\end{pmatrix}(\Sigma'_{(12)})^{-1}\begin{pmatrix}y_1-\alpha_1 \\ y_2-\alpha_2\end{pmatrix}\right)}}\frac{\mathbb{P}(\Omega_{1}=1|Y_{1}=y_1;\phi_1)}{\mathbb{P}(\Omega'_{1}=1|Y'_{1}=y_1;\phi_1')}=\frac{\sqrt{\mathrm{det}(\Sigma_{(12)})}}{\sqrt{\mathrm{det}(\Sigma'_{(12)}})},
\end{align*}
where $\mathrm{det}(\Sigma_{(12)})$ denotes the determinant of the matrix $\Sigma_{(12)}$.

With \eqref{eq:sigma12toy}, one has $\Sigma_{11}\Sigma_{22}-\Sigma_{12}^2=\Sigma_{11}\Sigma_{22}-(\Sigma'_{12})^2$ and $\frac{\sqrt{\mathrm{det}(\Sigma_{(12)})}}{\sqrt{\mathrm{det}(\Sigma'_{(12)}})}=1$. 

It leads to $\forall (y_1,y_2) \in \mathbb{R}^2$,
$$K \cdot \frac{\mathbb{P}(\Omega_{1}=1|Y_{1}=y_1;\phi_1)}{\mathbb{P}(\Omega'_{1}=1|Y'_{1}=y_1;\phi_1')}=1,$$
with
$$K:=\frac{\exp{\left(-\frac{1}{2\mathrm{det}(\Sigma_{(12)})}\left((y_1-\alpha_1)^2\Sigma_{11}+(y_2-\alpha_2)^2\Sigma_{22}-2(y_1-\alpha_1)(y_2-\alpha_2)\Sigma_{12}\right)\right)}}{\exp{\left(-\frac{1}{2\mathrm{det}(\Sigma_{(12)})}\left((y_1-\alpha_1)^2\Sigma_{11}+(y_2-\alpha_2)^2\Sigma_{22}-2(y_1-\alpha'_1)(y_2-\alpha_2)\Sigma'_{12}\right)\right)}}.$$
The quantity $K$ is equal to one, because 
$$(y_2-\alpha_2)\big((y_1-\alpha_1)\Sigma_{12}-(y_1-\alpha'_1)\Sigma'_{12}\big)=0$$ 

using \eqref{eq:sigma12alphatoy}. Thus, 

$$
\frac{\mathbb{P}(\Omega_{1}=1|Y_{1}=y_1;\phi_1)}{\mathbb{P}(\Omega'_{1}=1|Y'_{1}=y_1;\phi_1')}=1 
\quad \Longleftrightarrow \quad  F_1(\phi^0_1+\phi^1_1y_{1})=F_1((\phi')^0_1+(\phi')^1_1y_{1}) \qquad \forall y_1 \in \mathbb{R}
$$

As $F_1$ is strictly monotone, it is an injective function. Thus,

$$\phi^0_1+\phi^1_1y_{1}=(\phi')^0_1+(\phi')^1_1y_{1} \qquad \forall y_1 \in \mathbb{R}
\qquad \Longleftrightarrow \qquad  (\phi^0_1-(\phi')^0_1)+((\phi')^1_1-\phi^1_1)y_1=0 \qquad \forall y_1 \in \mathbb{R}$$

It implies $\phi_1=\phi'_1$. 

\subparagraph{Identifiability of the Covariances of the MNAR variable}

Equation \eqref{eq:Y1Y2toy} thus leads to
$$f_{(Y_{1},Y_{2})}(y_1,y_2;\alpha_1,\alpha_2,\Sigma_{(12)})=f_{(Y'_{1},Y'_{2})}(y_1,y_2;\alpha'_1,\alpha'_2,\Sigma'_{(12)}) \qquad \forall (y_1,y_2) \in \mathbb{R}^2$$
One can conclude that $\Sigma_{12}=\Sigma'_{12}$. 
The same reasoning may be done for the covariance between $Y_1$ and $Y_3$. 

\subparagraph{Identifiability of the loading matrix}

One wants to prove $B=B'$ up to row permutation. 
One has
\begin{align}
    \nonumber
    \Sigma=\Sigma'
    &\Leftrightarrow \Sigma-\sigma^2I_{p\times p}=\Sigma'-\sigma^2I_{p\times p} \\
    \label{eq:btb_equal}
    &\Leftrightarrow B^TB = (B')^TB'
\end{align}

As $B^TB$ is a positive symetric matrix of rank $2$, one has the following singular value decomposition, 
$$B^TB = (B')^TB'=UDU^T,$$
where $U= (u_1 | u_2 | u_3) \in \mathbb{R}^{3\times 3}$ the orthogonal matrix of singular vector and 
$$D= \begin{pmatrix}
			\sqrt{{d}_1} & 0 &0  \\
			0 &  \sqrt{{d}_2} & 0 \\
			0 & 0 & 0 \\
		\end{pmatrix}  \in \mathbb{R}^{3\times3}$$ 
with $d_1 \geq d_2 \geq 0$.
One can choose
$$B=\begin{pmatrix}
			& \sqrt{d_1}{u}_1^T & \\
			\hline
			& \sqrt{d_2} {u}_2^T & \end{pmatrix}$$
noting that a row permutation of B would not change the product $B^TB$. Therefore, $B=B'$ up to a row permutation.

\end{proof}

\subsection{Proof of Proposition \ref{prop:identifiability} in the general case}

We present the proof of Proposition \ref{prop:identifiability} in the general case where $d$ variables are self-masked MNAR and $p-d$ variables are MCAR.

\begin{proof}
Assume that $(Y,\Omega)$ and $(Y',\Omega')$ have distributions respectively parameterized by $(\alpha,\Sigma, \phi)$ and $(\alpha',\Sigma', \phi')$.
Assume that $Y$ and $Y'$ have the same following observed distributions
\begin{equation}
\label{eq:id_lawequality}
\mathcal{L}(Y_j,\Omega_j=1;\alpha_j,\Sigma_{jj},\phi_j)=\mathcal{L}(Y'_j,\Omega'_j=1;\alpha'_j,\Sigma'_{jj},\phi'_j) \qquad \forall j \in \{1, \dots,p\},
\end{equation}
\begin{multline}
\label{eq:id_lawequality_twovar}
\mathcal{L}(Y_j,Y_k, \Omega_j=1, \Omega_k=1;\alpha_j,\alpha_k,\Sigma_{(jk)},\phi_j,\phi_k)\\ 
=\mathcal{L}(Y'_j,Y'_k,\Omega'_j=1, \Omega'_k=1;\alpha'_j,\alpha'_k,\Sigma'_{(jk)},\phi'_j,\phi'_k) \qquad \forall j\neq k \in \{1, \dots,p\},
\end{multline}
where $\Sigma_{(jk)}$ denotes the covariance matrix $\begin{pmatrix} \Sigma_{jj} & \Sigma_{jk} \\
\Sigma_{jk} & \Sigma_{kk}\end{pmatrix}$.

In order to show that parameters identifiability holds, we need to show that \eqref{eq:id_lawequality} and \eqref{eq:id_lawequality_twovar} implies that $\alpha=\alpha'$, $\Sigma=\Sigma'$ and $\phi=\phi'$. Then, under a known noise level $\sigma^2$, we will prove that $B$ and $B'$ are equal up to row permutations. 

In what follows, $f_{Y_{.j}}$ or $f_{(Y_{.j},Y_{.k})}$ respectively denote the density function of $Y_{.j}$, and of $(Y_{.j},Y_{.k})$.

In the following, we will use the following tip, for any $l \in \{1,\dots,p\}$ and $\mathcal{K} \subset \{1,\dots,p\}\setminus\{l\}$ such that $0\leq|\mathcal{K}|\leq p-1$,
\begin{align*}
\mathbb{P}(\Omega_{l}=1|Y_{l} =y_l,Y_{\mathcal{K}} =y_{\mathcal{K}};\phi_l)&=\mathbb{E}[\mathbb{E}[\mathbf{1}_{\Omega_{l}=1}|Y;\phi_l]|Y_{l} =y_l,Y_{\mathcal{K}} =y_{\mathcal{K}}] \\
&=\mathbb{E}[\mathbb{P}(\Omega_{l}=1|Y =y;\phi_l)|Y_{l} =y_l,Y_{\mathcal{K}} =y_{\mathcal{K}}]
\end{align*}
Thus, 
\begin{multline*}
\mathbb{P}(\Omega_{l}=1|Y_{l} =y_l,Y_{\mathcal{K}} =y_{\mathcal{K}};\phi_l)\\
=\left\{\begin{array}{lr} 
 \mathbb{E}[\mathbb{P}(\Omega_{l}=1|Y_l =y_l;\phi_l)|Y_{l} =y_l,Y_{\mathcal{K}} =y_{\mathcal{K}}]  &\qquad \textrm{if $Y_l$ is self-masked MNAR} \\
  \mathbb{E}[\mathbb{P}(\Omega_{l}=1;\phi_l)|Y_{l} =y_l,Y_{\mathcal{K}} =y_{\mathcal{K}}]  & \qquad \textrm{if $Y_l$ is MCAR} \\
  \end{array}\right.
 \end{multline*}
by measurability if $Y_{l}$ is self-masked MNAR and by independence if $Y_{l}$ is MCAR. Thus, using the mechanisms in \eqref{eq:selfmaskedMNARmechanism} and \eqref{eq:MCARmechanism},
\begin{empheq}[left={\mathbb{P}(\Omega_{l}=1|Y_{l} =y_l,Y_{\mathcal{K}} =y_{\mathcal{K}};\phi_l)=\empheqlbrace}]{align}
\label{eq:tipMNAR}
    &\mathbb{P}(\Omega_{l}=1|Y_l =y_l;\phi_l) & \textrm{if $Y_l$ is self-masked MNAR} \\
\label{eq:tipMCAR}
    & \mathbb{P}(\Omega_{l}=1;\phi_l) & \textrm{if $Y_l$ is MCAR}
  \end{empheq}

\paragraph{Identifiability of the parameters for the not-MNAR variables $(Y_j)_{j \in    \widebar{\mathcal{M}}}$.}

\subparagraph{Mechanism parameter, Mean and Variance of $Y_j, j \in \widebar{\mathcal{M}}$.}
Equation \eqref{eq:id_lawequality} trivially gives that
$$P(\Omega_j=1)=P(\Omega'_j=1).$$
Using \eqref{eq:tipMCAR}, $P(\Omega_j=1)=\mathbb{P}(\Omega_{j}=1|Y_j =y_j;\phi_j)=F_j(\phi_j)$. As $F_j$ is strictly monotone, it implies that
$$F_j(\phi_j)=F_j(\phi'_j) 
\Longleftrightarrow 
\phi_j=\phi'_j.$$
Equation \eqref{eq:id_lawequality} also leads to
$$\mathbb{P}(\Omega_{j}=1|Y_j =y_j;\phi_j)f_{Y_{j}}(y_j;\alpha_j,\Sigma_{jj})=\mathbb{P}(\Omega'_{j}=1|Y'_j =y_j;\phi'_j)f_{Y'_{j}}(y_j;\alpha'_j,\Sigma'_{jj}) \qquad \forall y_j \in \mathbb{R}.$$
As $\phi_j=\phi'_j$, one obtains
$$f_{Y_{j}}(y_j;\alpha_j,\Sigma_{jj})=f_{Y'_{j}}(y_j;\alpha'_j,\Sigma'_{jj}) \qquad \forall y_j \in \mathbb{R}$$
which directly implies that $\alpha_j=\alpha'_j$ and $\Sigma_{jj}=\Sigma'_{jj}$, since $Y_{j}$ and $Y'_{j}$ are Gaussian variables.

\subparagraph{Covariance between two not MNAR variables $Y_j$ and $Y_k, \: j\neq k \in \widebar{\mathcal{M}}$.}
Equation \eqref{eq:id_lawequality_twovar} gives that for all $(y_j,y_k) \in \mathbb{R}^2$
\begin{multline}\label{eq:MCARid}
\mathbb{P}(\Omega_{j}=1,\Omega_k=1|Y_j=y_j,Y_k=y_k;\phi_{j},\phi_k)f_{(Y_{j},Y_{k})}(y_j,y_k;\alpha_{j},\alpha_k,\Sigma_{(j,k)}) \\
=\mathbb{P}(\Omega'_{j}=1,\Omega'_k=1|Y'_j=y_j,Y'_k=y_k;\phi'_{j},\phi'_k)f_{(Y'_{j},Y'_{k})}(y_j,y_k;\alpha'_{j},\alpha'_k,\Sigma'_{(j,k)}),
\end{multline}
and one has as well that
$$\mathbb{P}(\Omega_{j}=1,\Omega_k=1|Y_j=y_j,Y_k=y_k;\phi_{j},\phi_k)=\mathbb{P}(\Omega_{j}=1|Y_j=y_j;\phi_j)\mathbb{P}(\Omega_{k}=1|Y_k=y_k;\phi_k),$$
because $\Omega_{j}\independent\Omega_{k}|Y$. Likewise, 
$$\mathbb{P}(\Omega'_{j}=1,\Omega'_k=1|Y'_j=y_j,Y'_k=y_k;\phi'_{j},\phi'_k)=\mathbb{P}(\Omega'_{j}=1|Y'_j=y_j;\phi'_j)\mathbb{P}(\Omega'_{k}=1|Y'_k=y_k;\phi'_k).$$ 
Given that $\phi_j=\phi'_j$ and $\phi_k=\phi'_k$, one obtains
$$\mathbb{P}(\Omega_{j}=1,\Omega_k=1|Y_j=y_j,Y_k=y_k;\phi_{j},\phi_k)=\mathbb{P}(\Omega'_{j}=1,\Omega'_k=1|Y'_j=y_j,Y'_k=y_k;\phi_{j},\phi_k).$$
Thus, Equation \eqref{eq:MCARid} leads to, for all $(y_j,y_k) \in \mathbb{R}^2$,
$$f_{(Y_{j},Y_{k})}(y_j,y_k;\alpha_{j},\alpha_k,\Sigma_{(j,k)})=f_{(Y'_{j},Y'_{k})}(y_j,y_k;\alpha'_{j},\alpha'_k,\Sigma'_{(j,k)}),$$
and $\Sigma_{jk}=\Sigma'_{jk}$. 

\paragraph{Identifiability of the parameters for the MNAR variables} 

\subparagraph{Variance of $Y_m, m \in \mathcal{M}$}
Equation \eqref{eq:id_lawequality} gives that 
$$f_{(Y_{m},\Omega_{m}=1)}(y_m;\alpha_m,\Sigma_{mm},\phi_m)=f_{(Y'_{m},\Omega'_{m}=1)}(y_m;\alpha'_m,\Sigma'_{mm},\phi'_m) \qquad \forall y_m \in \mathbb{R}.$$
Given the missing mechanism in \eqref{eq:selfmaskedMNARmechanism} and that $Y_{.m} \sim \mathcal{N}(\alpha_m,\Sigma_{mm})$,  \cite[Theorem 1 a)]{miao2016identifiability} ensures that $\Sigma_{mm}=\Sigma'_{mm}$.

\subparagraph{Mean and mechanism parameter of $Y_m, m \in \mathcal{M}$}

Let $j$ be the index of a not MNAR variable.
One has
\begin{multline}\label{eq:id_YjgivenYm}
\mathcal{L}(Y_{j},\Omega_{j}=1|Y_{m},\Omega_{m}=1;\alpha_j,\alpha_m,\Sigma_{(jm)},\phi_j,\phi_m) \\
=\mathcal{L}(Y'_{j},\Omega'_{j}=1|Y'_{m},\Omega'_{m}=1;\alpha'_j,\alpha'_m,\Sigma'_{(jm)},\phi'_j,\phi'_m)
\end{multline}
using \eqref{eq:id_lawequality} and \eqref{eq:id_lawequality_twovar} and noting that
\begin{multline*}
f_{(Y_{j},\Omega_{j}=1)|Y_{m}=y_m,\Omega_{m}=1}(y_j;\alpha_j,\alpha_m,\Sigma_{(jm)},\phi_j,\phi_m) \\
=\frac{f_{(Y_{j},\Omega_{j}=1,Y_{m},\Omega_{m}=1)}(y_j,y_m;\alpha_j,\alpha_m,\Sigma_{(jm)},\phi_j,\phi_m)}{f_{(Y_{m},\Omega_{m}=1)}(y_m;\alpha_m,\Sigma_{mm},\phi_m)} \qquad \forall (y_j,y_m) \in \mathbb{R}^2.
\end{multline*}
Equation \eqref{eq:id_YjgivenYm} implies that $\forall (y_j,y_m) \in \mathbb{R}^2$,
\begin{multline}\label{eq:mechanismYm}
\mathbb{P}(\Omega_j=1|Y_j=y_j,Y_m=y_m,\Omega_m=1;\phi_j)\frac{\mathbb{P}(\Omega_m=1|Y_j=y_j,Y_m=y_m;\phi_m)f_{Y_j|Y_m=y_m}(y_j;\alpha_j,\alpha_m,\Sigma_{(jm)})}{\mathbb{P}(\Omega_m=1|Y_m=y_m;\phi_m)}\\
=\mathbb{P}(\Omega'_j=1|Y'_j=y_j,Y'_m=y_m,\Omega'_m=1;\phi'_j)\frac{\mathbb{P}(\Omega'_m=1|Y'_j=y_j,Y'_m=y_m;\phi'_m)f_{Y'_j|Y'_m=y_m}(y_j;\alpha'_j,\alpha'_m,\Sigma'_{(jm)})}{\mathbb{P}(\Omega'_m=1|Y'_m=y_m;\phi'_m)}
\end{multline}
\normalsize
One can note that 
$$
\mathbb{P}(\Omega_j=1|Y_j=y_j,Y_m=y_m,\Omega_m=1;\phi_j)=\mathbb{P}(\Omega_j=1|Y_j=y_j;\phi_j).
$$ 
Indeed,
\begin{align*}
\mathbb{P}(\Omega_j=1|Y_j=y_j,Y_m=y_m,\Omega_m=1;\phi_j)&=\frac{\mathbb{P}(\Omega_j=1\cap \Omega_m=1|Y_j=y_j,Y_m=y_m;\phi_j,\phi_m)}{\mathbb{P}(\Omega_m=1|Y_j=y_j,Y_m=y_m;\phi_m)} \\
&=\frac{\mathbb{P}(\Omega_j=1|Y_j=y_j;\phi_j)\mathbb{P}(\Omega_m=1|Y_m=y_m;\phi_m)}{\mathbb{P}(\Omega_m=1|Y_j=y_j,Y_m=y_m;\phi_m)} \\
&=\mathbb{P}(\Omega_j=1|Y_j=y_j;\phi_j),
\end{align*}

using \eqref{eq:hypID} in the second step. Likewise,  
$$\mathbb{P}(\Omega'_j=1|Y'_j=y_j,Y'_m=y_m,\Omega'_m=1;\phi'_j)=\mathbb{P}(\Omega'_j=1|Y'_j=y_j;\phi'_j).$$
Given that $\phi_j=\phi'_j$, 
\begin{equation*}
\mathbb{P}(\Omega_j=1|Y_j=y_j,Y_m=y_m,\Omega_m=1;\phi_j)=\mathbb{P}(\Omega'_j=1|Y'_j=y_j,Y'_m=y_m,\Omega'_m=1;\phi'_j)
\end{equation*}

Thus, Equation \eqref{eq:mechanismYm} leads to 
\begin{multline*}
\frac{\mathbb{P}(\Omega_m=1|Y_j=y_j,Y_m=y_m;\phi_m)f_{Y_j|Y_m=y_m}(y_j;\alpha_j,\alpha_m,\Sigma_{(jm)})}{\mathbb{P}(\Omega_m=1|Y_m=y_m;\phi_m)}\\
=\frac{\mathbb{P}(\Omega'_m=1|Y'_j=y_j,Y'_m=y_m;\phi'_m)f_{Y'_j|Y'_m=y_m}(y_j;\alpha'_j,\alpha'_m,\Sigma'_{(jm)})}{\mathbb{P}(\Omega'_m=1|Y'_m=y_m;\phi'_m)} \qquad \forall (y_j,y_m) \in \mathbb{R}^2.
\end{multline*}
As $\mathbb{P}(\Omega_m=1|Y_j=y_j,Y_m=y_m;\phi_m)=\mathbb{P}(\Omega_m=1|Y_j=y_j;\phi_m)$ by using \eqref{eq:tipMNAR}, one obtains 
$$f_{Y_j|Y_m=y_m}(y_j;\alpha_j,\alpha_m,\Sigma_{(jm)})
=f_{Y'_j|Y'_m=y_m}(y_j;\alpha'_j,\alpha'_m,\Sigma'_{(jm)}) \qquad \forall (y_j,y_m) \in \mathbb{R}^2, $$
which leads to the equality of the conditional expectation and variance, as follows:
\begin{align*}
\alpha_{j}+\Sigma_{mj}\Sigma_{mm}^{-1}(\alpha_m-y_m)&=\alpha'_{j}+\Sigma'_{mj}(\Sigma'_{mm})^{-1}(\alpha'_m-y_m) \qquad \forall (y_j,y_m) \in \mathbb{R}^2 \\
\Sigma_{jj}-\Sigma_{mj}^2\Sigma_{mm}^{-1}&=\Sigma'_{jj}-(\Sigma'_{mj})^2(\Sigma'_{mm})^{-1}
\end{align*}

As $\alpha_j=\alpha'_j$ and $\Sigma_{mm}=\Sigma'_{mm}$, 

\begin{align}
\label{eq:sigma12}
\Sigma_{mj}^2=(\Sigma'_{mj})^2 &\Longrightarrow |\Sigma_{mj}|=|\Sigma'_{mj}|  \\
\label{eq:sigma12alpha}
\frac{\Sigma_{mj}}{\Sigma'_{mj}}=\frac{(\alpha'_m-y_m)}{(\alpha_m-y_m)} &\Longrightarrow |\alpha_m-y_m|=|\alpha'_m-y_m| \qquad \forall y_m \in \mathbb{R}
\end{align}

Equation \eqref{eq:sigma12alpha} implies that $\alpha_m=\alpha'_m$, since for $y_m=\alpha_m'$, one has $\alpha_m-\alpha'_m=0$.


In addition, using \eqref{eq:id_lawequality}, one has for all  $(y_j,y_m) \in \mathbb{R}^2$,
\begin{multline}\label{eq:mechanismYmbis}
\mathbb{P}(\Omega_j=1,\Omega_m=1|Y_j=y_j,Y_m=y_m;\phi_j,\phi_m)f_{(Y_{j},Y_{m})}(y_j,y_m;\alpha_j,\alpha_m,\Sigma_{(jm)}) \\
= \mathbb{P}(\Omega'_j=1,\Omega'_m=1|Y'_j=y_j,Y'_m=y_m;\phi'_j,\phi'_m)f_{(Y'_{j},Y'_{m})}(y_j,y_m;\alpha'_j,\alpha'_m,\Sigma'_{(jm)}) 
\end{multline}
One can note that
\begin{multline*}
\mathbb{P}(\Omega_j=1,\Omega_m=1|Y_j=y_j,Y_m=y_m;\phi_j,\phi_m)\\
=\mathbb{P}(\Omega_j=1|Y_j=y_j;\phi_j)\mathbb{P}(\Omega_m=1|Y_m=y_m;\phi_m),
\end{multline*}
using \eqref{eq:hypID}. The same equation holds for $(Y'_j,Y'_m,\Omega'_j,\Omega'_m)$ with the parameters $(\phi'_j,\phi'_m)$. Using $\phi_j=\phi'_j$, Equation \eqref{eq:mechanismYmbis} leads to 
\begin{multline}\label{eq:mechanismYmter}
\mathbb{P}(\Omega_m=1|Y_m=y_m;\phi_m)f_{(Y_{j},Y_{m})}(y_j,y_m;\alpha_j,\alpha_m,\Sigma_{(jm)})= \\
\mathbb{P}(\Omega'_m=1|Y'_m=y_m;\phi'_m)f_{(Y'_{j},Y'_{m})}(y_j,y_m;\alpha'_j,\alpha'_m,\Sigma'_{(jm)}) \qquad \forall (y_j,y_m) \in \mathbb{R}^2.
\end{multline}
It implies that, $\forall (y_j,y_m) \in \mathbb{R}^2$,
\footnotesize
\begin{align*}
\frac{\exp{\left(-\frac{1}{2}\begin{pmatrix}y_j-\alpha_j & y_m-\alpha_m\end{pmatrix}\Sigma^{-1}_{(jm)}\begin{pmatrix}y_j-\alpha_j \\ y_m-\alpha_m\end{pmatrix}\right)}}{\exp{\left(-\frac{1}{2}\begin{pmatrix}y_j-\alpha'_j & y_m-\alpha'_m\end{pmatrix}(\Sigma'_{(jm)})^{-1}\begin{pmatrix}y_j-\alpha'_j \\ y_m-\alpha'_m\end{pmatrix}\right)}}
\frac{\mathbb{P}(\Omega_{m}=1|Y_{m}=y_m;\phi_m)}{\mathbb{P}(\Omega'_{m}=1|Y'_{m}=y_m;\phi'_m)}
=\frac{\sqrt{\mathrm{det}(\Sigma_{(jm)})}}{\sqrt{\mathrm{det}(\Sigma'_{(jm)}})},
\end{align*}
\normalsize
where $\mathrm{det}(\Sigma_{(jm)})$ denotes the determinant of the covariance matrix $\Sigma_{(jm)}$. 

With $\Sigma_{jj}=\Sigma'_{jj}$, $\Sigma_{mm}=\Sigma'_{mm}$ and Equation \eqref{eq:sigma12}, one has $$\Sigma_{jj}\Sigma_{mm}-\Sigma_{mj}^2=\Sigma_{jj}\Sigma_{mm}-(\Sigma'_{mj})^2 
\qquad \Longrightarrow \qquad 
\frac{\sqrt{\mathrm{det}(\Sigma_{(jm)})}}{\sqrt{\mathrm{det}(\Sigma'_{(jm)}})}=1.
$$

Besides, using $\alpha_j=\alpha'_j$, $\Sigma_{jj}=\Sigma'_{jj}$ and  $\Sigma_{mm}=\Sigma'_{mm}$, one obtains that for all $ (y_j,y_m) \in \mathbb{R}^2$,
$$K \cdot \frac{\mathbb{P}(\Omega_{m}=1|Y_{m}=y_m;\phi_m)}{\mathbb{P}(\Omega'_{m}=1|Y'_{m}=y_m;\phi'_m)}=1,$$
with
$$K:=\frac{\exp{\left(-\frac{1}{2\mathrm{det}(\Sigma_{(jm)})}\left((y_j-\alpha_j)^2\Sigma_{jj}+(y_m-\alpha_m)^2\Sigma_{mm}-2(y_j-\alpha_j)(y_m-\alpha_m)\Sigma_{mj}\right)\right)}}{\exp{\left(-\frac{1}{2\mathrm{det}(\Sigma_{(jm)})}\left((y_j-\alpha_j)^2\Sigma_{jj}+(y_m-\alpha_m)^2\Sigma_{mm}-2(y_j-\alpha_j)(y_m-\alpha'_m)\Sigma'_{mj}\right)\right)}}.$$
The quantity $K$ is equal to one, because $$(y_j-\alpha_j)((y_m-\alpha_m)\Sigma_{mj}-(y_m-\alpha'_m)\Sigma'_{mj})=0$$ using \eqref{eq:sigma12alpha}. Thus, for all $y_m \in \mathbb{R}$,
$$
\frac{\mathbb{P}(\Omega_{m}=1|Y_{m}=y_m;\phi_m)}{\mathbb{P}(\Omega'_{m}=1|Y'_{m}=y_m;\phi'_m)}=1 
\quad \Longleftrightarrow \quad  F_m(\phi^0_m+\phi^1_my_{m})=F_m((\phi')^0_m+(\phi')^1_my_{m}).
$$

As F is strictly monotone, it is an injective function. Thus,
$$\phi^0_m+\phi^1_my_{m}=(\phi')^0_m+(\phi')^1_my_{m}\Leftrightarrow ((\phi')^0_m-\phi^0_m)+((\phi')^1_m-\phi^1_m)y_m=0 \qquad \forall y_1 \in \mathbb{R}$$

It implies that $\phi_m=\phi'_m$.

\subparagraph{Covariance between $Y_j$ and $Y_m$ with $j \in \widebar{\mathcal{M}}, m \in \mathcal{M}$}

Using \eqref{eq:mechanismYmter} and $\phi_m=\phi'_m$, one has 
\begin{align*}
f_{(Y_{j},Y_{m})}(y_j,y_m;\alpha_j,\alpha_m,\Sigma_{(jm)})=f_{(Y'_{j},Y'_{m})}(y_j,y_m;\alpha'_j,\alpha'_m,\Sigma'_{(jm)}) \qquad \forall (y_j,y_m) \in \mathbb{R}^2
\end{align*}
One can conclude that $\Sigma_{mj}=\Sigma'_{mj}$. 

\subparagraph{Covariance between $Y_\ell$ and $Y_m$ with $\ell\neq m \in \mathcal{M}$}

Using \eqref{eq:id_lawequality}, one has for all $(y_\ell,y_m) \in \mathbb{R}^2$,
\begin{multline}\label{eq:covYmbis}
\mathbb{P}(\Omega_\ell=1,\Omega_m=1|Y_j=y_j,Y_m=y_m;\phi_\ell,\phi_m)f_{(Y_{\ell},Y_{m})}(y_\ell,y_m;\alpha_\ell,\alpha_m,\Sigma_{(\ell m)}) \\
= \mathbb{P}(\Omega'_\ell=1,\Omega'_m=1|Y'_\ell=y_\ell,Y'_m=y_m;\phi'_\ell,\phi'_m)f_{(Y'_{\ell},Y'_{m})}(y_\ell,y_m;\alpha'_\ell,\alpha'_m,\Sigma'_{(\ell m)}) 
\end{multline}
One can note that
\begin{multline*}
\mathbb{P}(\Omega_\ell=1,\Omega_m=1|Y_\ell=y_\ell,Y_m=y_m;\phi_\ell,\phi_m) \\ =\mathbb{P}(\Omega_\ell=1|Y_\ell=y_\ell;\phi_\ell)\mathbb{P}(\Omega_m=1|Y_m=y_m;\phi_m),
\end{multline*}

using \eqref{eq:hypID}. The same equation holds for $(Y'_\ell,Y'_m,\Omega'_\ell,\Omega'_m)$ with the parameters $(\phi'_\ell,\phi'_m)$. Yet $\phi_\ell=\phi'_\ell$ and $\phi_m=\phi'_m$, which gives, for all $(y_j,y_m) \in \mathbb{R}^2$,
$$\mathbb{P}(\Omega_\ell=1,\Omega_m=1|Y_\ell=y_\ell,Y_m=y_m;\phi_\ell,\phi_m)=\mathbb{P}(\Omega'_\ell=1,\Omega'_m=1|Y'_\ell=y_\ell,Y'_m=y_m;\phi_\ell,\phi'_m).
$$

Equation \eqref{eq:covYmbis} leads to
$$f_{(Y_{\ell},Y_{m})}(y_\ell,y_m;\alpha_\ell,\alpha_m,\Sigma_{(\ell m)})=f_{(Y'_{\ell},Y'_{m})}(y_\ell,y_m;\alpha'_\ell,\alpha'_m,\Sigma'_{(\ell m)}) \qquad \forall (y_\ell,y_m) \in \mathbb{R}^2,$$
which implies that $\Sigma_{\ell m}=\Sigma'_{\ell m}$.

\paragraph{Identifiability of the loading matrix}

One wants to prove that $B=B'$ up to a row permutation. 
One has
\begin{align}
    \nonumber
    \Sigma=\Sigma'
    &\Longleftrightarrow \Sigma-\sigma^2I_{p\times p}=\Sigma'-\sigma^2I_{p\times p} \\
    \label{eq:btb_equal}
    &\Longleftrightarrow B^TB = (B')^TB'
\end{align}

As $B^TB$ is a positive symetric matrix of rank $r$, its singular value decomposition reads 
$$B^TB = (B')^TB'=UDU^T,$$
where $U= (u_1 | \hdots | u_p) \in \mathbb{R}^{p\times p}$ is an orthogonal matrix containing the singular vectors and 
$$D= \begin{pmatrix}
			\sqrt{{d}_1} & & & & &  \\
			&  \ddots & & 0 & & \\
			& & \sqrt{{d}_r} & & & \\
			&  0 & & 0 & &  \\
			&  & & & \ddots &  \\
			&  & & & &  0 \\
		\end{pmatrix}  \in \mathbb{R}^{p\times p}$$ 
with $d_1\geq \dots \geq d_r \geq 0$.
One can choose
$$B=\begin{pmatrix}
			& \sqrt{d_1}{u}_1^T & \\
			\hline
			& \vdots & \\
			\hline
			& \sqrt{d_r} {u}_r^T & \end{pmatrix}$$
A row permutation of B does not change the product $B^TB$. Therefore, $B=B'$ up to a row permutation.

\end{proof}

\section{Proof for Section \ref{sec:generalcase}}
\label{sec:proofresults}

\subsection{Proof of Lemma \ref{lem:PCAlinear}}
\label{sec:proofPCAlinar}

\newtheorem*{lem:PCAlinear}{Lemma \ref{lem:PCAlinear}}
\begin{lem:PCAlinear}
	Under the PPCA model \eqref{eq:model} and Assumption \ref{hyp1}, choose $j\in \mathcal{J}$. Denote $B^{-1} \in \mathbb{R}^{r\times r}$ the inverse of $\begin{pmatrix} B_{.m} & (B_{.j'})_{j'\in \mathcal{J}_{-j}} \end{pmatrix}$. 
	One has
		\begin{equation*}
		Y_{.j}=\mathcal{B}_{j\rightarrow m,\mathcal{J}_{-j}[0]}+\sum_{j'\in \mathcal{J}_{-j}} \mathcal{B}_{j\rightarrow m,\mathcal{J}_{-j}[j']} Y_{.j'} + \mathcal{B}_{j\rightarrow m,\mathcal{J}_{-j}[m]} Y_{.m} + \zeta
    \end{equation*}
    	with:
	\begin{align*}
		\mathcal{B}_{j\rightarrow m,\mathcal{J}_{-j}[j']} &:=\sum_{k \in \{m\}\cup\mathcal{J}_{-j}}B^{-1}_{kj'}B_{jk}, \forall j' \in \mathcal{J}_{-j} \\
		\mathcal{B}_{j\rightarrow m,\mathcal{J}_{-j}[m]}&:=\sum_{k \in \{m\}\cup\mathcal{J}_{-j}}B^{-1}_{km}B_{jk},  \\
		\mathcal{B}_{j\rightarrow m,\mathcal{J}_{-j}[0]}&:=\mathbf{1}\alpha_{j} - \sum_{j' \in \mathcal{J}_{-j}}	\mathcal{B}_{j\rightarrow m,\mathcal{J}_{-j}[j']}\mathbf{1}\alpha_{j'} - \mathcal{B}_{j\rightarrow m,\mathcal{J}_{-j}[m]}\mathbf{1}\alpha_m \\ 
		\zeta&=-\sum_{j'\in\mathcal{J}_{-j}} \mathcal{B}_{j\rightarrow m,\mathcal{J}_{-j}[j']} \epsilon_{.j'} - \mathcal{B}_{j\rightarrow m,\mathcal{J}_{-j}[m]} \epsilon_{.m}+ \epsilon_{.j}
	\end{align*}
	
\end{lem:PCAlinear}

\begin{proof}
	
	Starting from the PPCA model written in \eqref{eq:model} and recalled here
	$$Y=\mathbf{1}\alpha + WB + \epsilon$$
	and the matrix $B \in \mathbb{R}^{r\times p}$ being of full rank $r$, solving this linear system is the same as solving the following reduced system
	
	\begin{equation*}
		\begin{pmatrix}
			Y_{.m} &
			(Y_{.j'})_{j' \in \mathcal{J}_{-j}}
		\end{pmatrix}= \mathbf{1}\alpha_{|r} + \begin{pmatrix}
			W_{.1} &
			\dots &
			W_{.r}
		\end{pmatrix} B_{|r}  + \epsilon_{|r}, 
	\end{equation*}
	where $B_{|r} \in \mathbb{R}^{r \times r}$ denotes the reduced matrix $\begin{pmatrix}
			B_{.m} &
			(B_{.j'})_{j' \in \mathcal{J}_{-j}}
		\end{pmatrix}$ of $B$. Similarly, $\alpha_{|r} \in \mathbb{R}^{r}$ and $\epsilon_{|r} \in \mathbb{R}^{n \times r}$ denote the reduced matrices of $\alpha$ and $\epsilon$. 
	With a slight abuse of notation, $B^{-1}$ denotes the inverse of the reduced matrix $\begin{pmatrix} B_{.m} & (B_{.j'})_{j' \in \mathcal{J}_{-j}}
	\end{pmatrix}$ which exists using \ref{hyp1}. 
	
	Then, one can derive that
	\begin{equation*}
		\begin{pmatrix}
			W_{.1} &
			\dots &
			W_{.r}
		\end{pmatrix} =  \left(	\begin{pmatrix}
			Y_{.m} &
			(Y_{.j'})_{j' \in \mathcal{J}_{-j}}
		\end{pmatrix}  - \mathbf{1}\alpha_{|r} - \epsilon_{|r} \right) B^{-1}.
	\end{equation*}
	
	The expression of $Y_{.j}$ as a function of the latent variables is 
	\begin{align*}
		Y_{.j}&=\mathbf{1}\alpha _{j}+ \begin{pmatrix}
			W_{.1} &
			\dots &
			W_{.r}
		\end{pmatrix}B_{j.}+\epsilon_{.j}  \\
		&= \mathbf{1}\alpha_{j} +  \left( \begin{pmatrix}
			Y_{.m}  &
			(Y_{.j'})_{j' \in \mathcal{J}_{-j}}
		\end{pmatrix}   - \mathbf{1}\alpha_{|r} - \epsilon_{|r} \right)  B^{-1}B_{j.} +\epsilon_{.j},
	\end{align*}
	so that
	\begin{multline*}
		Y_{.j}=\sum_{k \in \{m\}\cup\mathcal{J}_{-j}} \left(\sum_{\ell \in \{m,\}\cup\mathcal{J}_{-j}} B^{-1}_{lk}B_{jl}\right)Y_{.k} \\
		- \sum_{k \in \{m\}\cup\mathcal{J}_{-j}} \left(\sum_{\ell \in \{m\}\cup\mathcal{J}_{-j}} B^{-1}_{lk}B_{jl}\right)(\mathbf{1}\alpha_{k}+\epsilon_{.k}) + \epsilon_{.j} + \mathbf{1}\alpha_{j}.
    \end{multline*}
	which leads to the desired solution.
	
\end{proof}

\subsection{Proof of Proposition \ref{prop:mean_formula_general}}
\label{sec:proofmean}

\newtheorem*{prop:mean_formula_general}{Proposition \ref{prop:mean_formula_general}}

\begin{prop:mean_formula_general}[Mean estimator]
Consider the PPCA model \eqref{eq:model}. Under Assumptions \ref{hyp1} and \ref{hyp2}, an estimator of the mean of a MNAR variable $Y_{.m}$, for $m\in\mathcal{M}$, can be constructed as follows: choose $j\in\mathcal{J}$, and compute
	\begin{equation*}
		\hat{\alpha}_m :=\frac{\hat{\alpha}_{j}-\hat{\mathcal{B}}_{j\rightarrow m, \mathcal{J}_{-j}[0]}^c-\sum_{j'  \in \mathcal{J}_{-j}}\hat{\mathcal{B}}_{j\rightarrow m, \mathcal{J}_{-j}[j']}^c\hat{\alpha}_{j'}}{ \hat{\mathcal{B}}_{j\rightarrow m, \mathcal{J}_{-j}[m]}^c}, 
	\end{equation*}
	with the $(\hat{\mathcal{B}}_{j\rightarrow m, \mathcal{J}_{-j}[k]})$'s estimators of the coefficients given in Definition \ref{def:coeff_complete_Case} and assuming that the coefficient $\mathcal{B}_{j\rightarrow m, \mathcal{J}_{-j}[m]}^c$ estimated by $\hat{\mathcal{B}}_{j\rightarrow m, \mathcal{J}_{-j}[m]}^c$ is non zero. 
	
	Under the additional Assumptions \ref{hyp3} and \ref{hyp4}, this estimator is consistent. 
\end{prop:mean_formula_general}

\begin{proof}
    The main goal is to obtain a formula for $\alpha_{.m}$, \textit{i.e.}
    \begin{equation}\label{eq:meanproof}
    {\alpha}_m=\frac{{\alpha}_{j}-{\mathcal{B}}_{j\rightarrow m, \mathcal{J}_{-j}[0]}^c-\sum_{j'  \in \mathcal{J}_{-j}}{\mathcal{B}}_{j\rightarrow m, \mathcal{J}_{-j}[j']}^c{\alpha}_{j'}}{ {\mathcal{B}}_{j\rightarrow m, \mathcal{J}_{-j}[m]}^c},
    \end{equation}
    from which an estimator can be deduced.
    The idea is to express $\alpha_j$ from $\alpha_m$ and $(\alpha_j')_{j' \in \mathcal{J}_{-j}}$.
    Note that  $\mathbb{E}[Y_{.j}]=\mathbb{E}[\mathbb{E}[Y_{.j}|(Y_{.k})_{k \in \widebar{\{j\}}}]]$. Assumption \ref{hyp2} leads to 
    $$\mathbb{E}[Y_{.j}|(Y_{.k})_{k \in \widebar{\{j\}}}]=\mathbb{E}[Y_{.j}|(Y_{.k})_{k \in \widebar{\{j\}}},\Omega_{.m}=1].$$
	Then, by Definition \ref{def:coeff_complete_Case} which gives
	$(Y_{.j})_{|\Omega_{.m}=1}$,
	\footnotesize
	\begin{align*}
		&\mathbb{E}[Y_{.j}|(Y_{.k})_{k \in \widebar{\{j\}}},\Omega_{.m}=1] \\
		&=\mathbb{E}\left[\mathcal{B}^c_{j\rightarrow m,\mathcal{J}_{-j}[0]}
    +\sum_{k\in \{m\}\cup\mathcal{J}_{-j}} \mathcal{B}^c_{j\rightarrow m,\mathcal{J}_{-j}[k]} Y_{.k}  + \zeta^c\bigg|(Y_{.k})_{k \in \widebar{\{j\}}}\right] \\
		&= \mathcal{B}^c_{j\rightarrow m,\mathcal{J}_{-j}[0]}
    +\sum_{k\in \{m\}\cup\mathcal{J}_{-j}} \mathcal{B}^c_{j\rightarrow m,\mathcal{J}_{-j}[k]} Y_{.k}+\mathbb{E}\left[\zeta^c\bigg|(Y_{.k})_{k \in \widebar{\{j\}}}\right]
	\end{align*}
	\normalsize
	
	Thus, by taking the mean and given that $\mathbb{E}[\epsilon_{.k}]=0, \forall k \in \{m\}\cup\mathcal{J}_{-j}$, one has
	$$\alpha_j=\mathcal{B}^c_{j\rightarrow m,\mathcal{J}_{-j}[0]}+\sum_{j'\in \mathcal{J}_{-j}} \mathcal{B}^c_{j\rightarrow m,\mathcal{J}_{-j}[j']} \alpha_{j'} + \mathcal{B}^c_{j\rightarrow m,\mathcal{J}_{-j}[m]} \alpha_{m} ,$$
	implying Equation \eqref{eq:meanproof}, provided that $\mathcal{B}_{j\rightarrow m, \mathcal{J}_{-j}[m]}^c\neq 0$. 
    
    From this formula for the mean $\alpha_m$, one define its estimator $\hat{\alpha}_m$ as in \eqref{eq:expectation_main}. It is trivially consistent as the linear combination of consistent quantities under \ref{hyp3} and \ref{hyp4}
\end{proof}

\subsection{Proof of Proposition \ref{prop:var_formula_general}}
\label{sec:proofvar}

\newtheorem*{prop:var_formula_general}{Proposition \ref{prop:var_formula_general}}

\begin{prop:var_formula_general}[Variance and covariances estimators] 
	Consider the PPCA model \eqref{eq:model}. Under Assumptions \ref{hyp1} and \ref{hyp2}, an estimator of the variance of a MNAR variable $Y_{.m}$ for $m\in\mathcal{M}$ and its covariances with the pivot variables, can be constructed as follows: choose $j \in \mathcal{J}$ and compute
	\begin{equation*}
		\begin{pmatrix}
			\widehat{\mathrm{Var}}(Y_{.m}) &
			\widehat{\mathrm{Cov}}(Y_{.m},(Y_{.k})_{k\in \mathcal{J}})
		\end{pmatrix}^T:=(\widehat{M}_j)^{-1}\widehat{P}_j,
	\end{equation*}
	assuming that $\sigma^2$ tends to zero and the inverse of the matrix ${M}_j$ estimated by $(\widehat{M}_j)^{-1}$ exists, with 
	\begin{equation*}
    \widehat{M}_j = 
    \begin{tikzpicture}[baseline={-0.5ex},mymatrixenv,large/.style={font=\large}]
        \matrix [mymatrix,inner sep=4pt] (m)  
        {
        (\hat{\mathcal{B}}_{j\rightarrow m,\mathcal{J}_{-j}[m]}^c)^2 &0  & 2\hat{\mathcal{B}}_{j\rightarrow m,\mathcal{J}_{-j}[m]}^c\left(\hat{\mathcal{B}}_{j\rightarrow m,\mathcal{J}_{-j}[\mathcal{J}_{-j}]}^c\right)^T  \\
        -(\hat{\mathcal{B}}_{k\rightarrow m,\mathcal{J}_{-k}[m]}^c)_{k \in \mathcal{J}} &  & \\
        };

        \node[large] at (m-2-3){$(\widehat{M}^k)_{k \in \mathcal{J}}$};
        \mymatrixbraceright{1}{1}{$\in \mathbb{R}$}
        \mymatrixbraceright{2}{2}{$\in \mathbb{R}^{r}$}
        \mymatrixbracetop{2}{3}{$\in \mathbb{R}^{r}$}
        \mymatrixbracetop{1}{1}{$\in \mathbb{R}$}
    \end{tikzpicture}
\end{equation*}
    Let us precise that $\widehat{M}_j \in \mathbb{R}^{(r+1)\times(r+1)}$. One has $(\hat{\mathcal{B}}_{k\rightarrow m,\mathcal{J}_{-k}[m]}^c)_{k \in \mathcal{J}}=\begin{pmatrix}
    \hat{\mathcal{B}}_{j_1\rightarrow m,\mathcal{J}_{-j_1}[m]}^c \\
    \vdots \\
    \hat{\mathcal{B}}_{j_r\rightarrow m,\mathcal{J}_{-j_r}[m]}^c
    \end{pmatrix}$. 
    
    One details $\widehat{M^k}$ for $k=j_1$ and the same definition is valid for all $k \in \mathcal{J}$. 
    $$\widehat{M^{j_1}}=\begin{pmatrix}1 & -\hat{\mathcal{B}}_{j_1\rightarrow m,\mathcal{J}_{-j_1}[j_2]}^c & \dots &  -\hat{\mathcal{B}}_{j_1\rightarrow m,\mathcal{J}_{-j_1}[j_r]}^c
    \end{pmatrix} \in \mathbb{R}^r
    $$
    
    \begin{equation*}
    \widehat{P}_j = 
    \begin{tikzpicture}[baseline={-0.5ex},mymatrixenv,large/.style={font=\large}]
        \matrix [mymatrix,inner sep=4pt] (m)  
        {
        (\widehat{\mathrm{Var}}(Y_{.j})-Q^{ c}-(\hat{\mathcal{B}}^c_{j\rightarrow m,\mathcal{J}_{-j}[\mathcal{J}_{-j}]})^T \widehat{\mathrm{Var}}(Y_{\mathcal{J}_{-j}})\hat{\mathcal{B}}^c_{j\rightarrow m,\mathcal{J}_{-j}[\mathcal{J}_{-j}]} \\
        \left(((\hat{\mathcal{B}}^c_{k\rightarrow m,\mathcal{J}_{-k}})^T\begin{pmatrix}
	    1 & \hat{\alpha}_{m} & (\hat{\alpha}_{\ell})_{\ell \in \mathcal{J}_{-k}}
	    \end{pmatrix}^T -\hat{\alpha}_k)\hat{\alpha}_{m}]\right)_{k \in \mathcal{J}} \\
        };

        \mymatrixbraceleft{1}{1}{$\in \mathbb{R}$}
        \mymatrixbraceleft{2}{2}{$\in \mathbb{R}^{r}$}
        \mymatrixbracetop{1}{1}{$\in \mathbb{R}$}
    \end{tikzpicture}
    \end{equation*}
    

	\begin{multline*}
	\hat{Q}^{c}=\left(\widehat{\mathrm{Var}}(Y_{.j})\big| \Omega_{.m}=1\right)
	\\
	-\left(\widehat{\mathrm{Cov}}((Y_{.k})_{k \in \widebar{\{j\}}},Y_{.j}) \widehat{\mathrm{Var}}((Y_{.k})_{k \in \widebar{\{j\}}})^{-1} \widehat{\mathrm{Cov}}((Y_{.k})_{k \in \widebar{\{j\}}},Y_{.j})^T \big| \Omega_{.m}=1\right).
	\end{multline*}

	Under the additional Assumptions \ref{hyp3} and \ref{hyp4}, 
	the estimators for the variance of $Y_{.m}$ and its covariances with the pivot variables given in \eqref{eq:cov_matrix_general} are consistent. 
\end{prop:var_formula_general}

\begin{proof}
    As for the mean, to derive some estimator of the variance and the covariances, we want to obtain a formula as 
    \begin{equation}\label{eq:varcovformula}
		M_j\begin{pmatrix}
			{\mathrm{Var}}(Y_{.m}) &
			{\mathrm{Cov}}(Y_{.m},(Y_{.k})_{k\in \mathcal{J}})
		\end{pmatrix}^T=\left({P}_j-\mathcal{O}(\sigma^2)\right),
	\end{equation}
	with 

    \begin{equation*}
    M_j = 
    \begin{tikzpicture}[baseline={-0.5ex},mymatrixenv,large/.style={font=\large}]
        \matrix [mymatrix,inner sep=4pt] (m)  
        {
        (\mathcal{B}_{j\rightarrow m,\mathcal{J}_{-j}[m]}^c)^2 &0  & 2\mathcal{B}_{j\rightarrow m,\mathcal{J}_{-j}[m]}^c\left(\mathcal{B}_{j\rightarrow m,\mathcal{J}_{-j}[\mathcal{J}_{-j}]}^c\right)^T  \\
        -(\mathcal{B}_{k\rightarrow m,\mathcal{J}_{-k}[m]}^c)_{k \in \mathcal{J}} &  & \\
        };

        \node[large] at (m-2-3){$(M^k)_{k \in \mathcal{J}}$};
        \mymatrixbraceright{1}{1}{$\in \mathbb{R}$}
        \mymatrixbraceright{2}{2}{$\in \mathbb{R}^{r}$}
        \mymatrixbracetop{2}{3}{$\in \mathbb{R}^{r}$}
        \mymatrixbracetop{1}{1}{$\in \mathbb{R}$}
    \end{tikzpicture}
\end{equation*}
    Let us precise that $M_j \in \mathbb{R}^{(r+1)\times(r+1)}$. One has $(\mathcal{B}_{k\rightarrow m,\mathcal{J}_{-k}[m]}^c)_{k \in \mathcal{J}}=\begin{pmatrix}
    \mathcal{B}_{j_1\rightarrow m,\mathcal{J}_{-j_1}[m]}^c \\
    \vdots \\
    \mathcal{B}_{j_r\rightarrow m,\mathcal{J}_{-j_r}[m]}^c
    \end{pmatrix}$. 
    
    One details $M^k$ for $k=j_1$ and the same definition is valid for all $k \in \mathcal{J}$. 
    $$M^{j_1}=\begin{pmatrix}1 & -\mathcal{B}_{j_1\rightarrow m,\mathcal{J}_{-j_1}[j_2]}^c & \dots &  -\mathcal{B}_{j_1\rightarrow m,\mathcal{J}_{-j_1}[j_r]}^c
    \end{pmatrix} \in \mathbb{R}^r
    $$
    
    \begin{equation*}
    P_j = 
    \begin{tikzpicture}[baseline={-0.5ex},mymatrixenv,large/.style={font=\large}]
        \matrix [mymatrix,inner sep=4pt] (m)  
        {
        (\mathrm{Var}(Y_{.j})-Q^{ c}-(\mathcal{B}^c_{j\rightarrow m,\mathcal{J}_{-j}[\mathcal{J}_{-j}]})^T \mathrm{Var}(Y_{\mathcal{J}_{-j}})\mathcal{B}^c_{j\rightarrow m,\mathcal{J}_{-j}[\mathcal{J}_{-j}]} \\
        \left(((\mathcal{B}^c_{k\rightarrow m,\mathcal{J}_{-k}})^T\begin{pmatrix}
	    1 & \mathbb{E}[Y_{.m}] & (\mathbb{E}[Y_{.\ell}])_{\ell \in \mathcal{J}_{-k}}
	    \end{pmatrix}^T -\mathbb{E}[Y_{.k}])\mathbb{E}[Y_{.m}]\right)_{k \in \mathcal{J}} \\
        };

        \mymatrixbraceleft{1}{1}{$\in \mathbb{R}$}
        \mymatrixbraceleft{2}{2}{$\in \mathbb{R}^{r}$}
        \mymatrixbracetop{1}{1}{$\in \mathbb{R}$}
    \end{tikzpicture}
    \end{equation*}
    
    \begin{equation*}
    \mathcal{O}(\sigma^2) = 
    \begin{tikzpicture}[baseline={-0.5ex},mymatrixenv,large/.style={font=\large}]
        \matrix [mymatrix,inner sep=4pt] (m)  
        {
         o_{\mathrm{var}}(\sigma^2) \\
        -\left(o_{\mathrm{cov},k}(\sigma^2)\right)_{k \in \mathcal{J}} \\
        };

        \mymatrixbraceleft{1}{1}{$\in \mathbb{R}$}
        \mymatrixbraceleft{2}{2}{$\in \mathbb{R}^{r}$}
        \mymatrixbracetop{1}{1}{$\in \mathbb{R}$}
    \end{tikzpicture},
    \end{equation*}
    with $o_{\mathrm{var}}(\sigma^2)$ and $o_{\mathrm{cov},k}(\sigma^2)$ detailed in \eqref{eq:ovar} and \eqref{eq:ocov} respectively. 
	\begin{multline}\label{eq:Qcdef}
	Q^{c}=\left(\mathrm{Var}(Y_{.j})\big| \Omega_{.m}=1\right)
	\\
	-\left(\mathrm{Cov}((Y_{.k})_{k \in \widebar{\{j\}}},Y_{.j}) \mathrm{Var}((Y_{.k})_{k \in \widebar{\{j\}}})^{-1} \mathrm{Cov}((Y_{.k})_{k \in \widebar{\{j\}}},Y_{.j})^T \big| \Omega_{.m}=1\right).
	\end{multline}

    The strategy is to prove each equality of the linear system in \eqref{eq:varcovformula}.
	
	\paragraph{Deriving an equation for the variance.}	The idea is first to express $\mathrm{Var}(Y_{.j})$ from $\mathrm{Var}(Y_{.m})$, $(\mathrm{Var}(Y_{.j'}))_{j' \in \mathcal{J}_{-j}}$ and $(\mathrm{Cov}(Y_{.k},Y_{.\ell}))_{k \neq \ell \in \{m\}\cup\mathcal{J}_{-j}}$.
	The law of total variance reads as 
	\begin{equation}\label{eq:varY2toy}
	\mathrm{Var}(Y_{.j})=\mathbb{E}[\mathrm{Var}(Y_{.j}|Z)]+\mathrm{Var}(\mathbb{E}[Y_{.j}|Z]),
	\end{equation}
	with $Z=(Y_{.k})_{k \in \widebar{\{j\}}}$.
	
	For the first term in \eqref{eq:varY2toy}, using Assumption \ref{hyp2}, one has
	$$Y_{.j}\independent (\Omega_{.m}=1) | Z$$
	which leads to
	$$\mathrm{Var}(Y_{.j}|Z)=\mathrm{Var}(Y_{.j}|Z,\Omega_{.m}=1).$$
	The conditional variance for a Gaussian vector gives
	$$\mathrm{Var}(Y_{.j}|Z)=\mathrm{Var}(Y_{.j})-\mathrm{Cov}(Z,Y_{.j}) \mathrm{Var}(Z)^{-1} \mathrm{Cov}(Z,Y_{.j})^T,$$
	implying that $$\mathrm{Var}(Y_{.j}|Z,\Omega_{.m}=1)=\left(\mathrm{Var}(Y_{.j})-\mathrm{Cov}(Z,Y_{.j}) \mathrm{Var}(Z)^{-1} \mathrm{Cov}(Z,Y_{.j})^T\big| \Omega_{.m}=1 \right)$$ and then, as deterministic quantity,
	$$\mathbb{E}[\mathrm{Var}(Y_{.j}|Z)]=\left(\mathrm{Var}(Y_{.j})-\mathrm{Cov}(Z,Y_{.j}) \mathrm{Var}(Z)^{-1} \mathrm{Cov}(Z,Y_{.j})^T\big| \Omega_{.m}=1 \right).$$
	
	One has
	\begin{multline*}
	\mathrm{Cov}(Z,Y_{.j}) \mathrm{Var}(Z)^{-1} \mathrm{Cov}(Z,Y_{.j})^T= \\
	\mathrm{Cov}((Y_{.k})_{k \in \widebar{\{j\}}},Y_{.j}) \mathrm{Var}((Y_{.k})_{k \in \widebar{\{j\}}})^{-1} \mathrm{Cov}((Y_{.k})_{k \in \widebar{\{j\}}},Y_{.j})^T
	\end{multline*}
	
	leading to 
	\begin{equation}
	\label{eq:espvarcondtoy}
	\mathbb{E}[\mathrm{Var}(Y_{.j}|Z)]=Q^c,
	\end{equation}
	where $Q^c$ is defined in \eqref{eq:Qcdef}.
	
	For the second term of \eqref{eq:varY2toy}, remark that \ref{hyp2} implies that
	$$\mathrm{Var}(\mathbb{E}[Y_{.j}|Z])=\mathrm{Var}(\mathbb{E}[Y_{.j}|Z,\Omega_{.m}=1]),$$ 
	and
	$$
	\mathrm{Var}(\mathbb{E}[Y_{.j}|Z,\Omega_{.m}=1])=\mathrm{Var}\left(\mathbb{E}\left[\mathcal{B}^c_{j\rightarrow m,\mathcal{J}_{-j}[0]}
    +\sum_{k\in \{m\}\cup\mathcal{J}_{-j}} \mathcal{B}^c_{j\rightarrow m,\mathcal{J}_{-j}[k]} Y_{.k}  + \zeta^c\bigg|Z\right]\right),$$
    \textit{i.e.}
    \begin{multline*}
	\mathrm{Var}(\mathbb{E}[Y_{.j}|Z,\Omega_{.m}=1])
	\\=\mathrm{Var}\left(\sum_{k\in \{m\}\cup\mathcal{J}_{-j}} \mathcal{B}^c_{j\rightarrow m,\mathcal{J}_{-j}[k]} Y_{.k}-\sum_{k\in \{m\}\cup\mathcal{J}_{-j}} \mathcal{B}^c_{j\rightarrow m,\mathcal{J}_{-j}[k]} \mathbb{E}[\epsilon_{.k}|Z] +\mathcal{B}^c_{j\rightarrow m,\mathcal{J}_{-j}[0]}+\mathbb{E}[\epsilon_{.j}]\right)
    \end{multline*}
	
	In the variance, the first term is obtained using that the variables $(Y_{.k})_{k \in \{m\} \cup \mathcal{J}_{-j}}$ are $Z-$measurable. The two last terms use that $\mathcal{B}^c_{j\rightarrow m,\mathcal{J}_{-j}[k]}$ is a constant and $\epsilon_{.j}$ is independent of Z.
	To calculate the second term, involving $\mathbb{E}[\epsilon_{.k}|Z]$, one first shows that the vector
	$\begin{pmatrix} (Y_{.k})_{k \in \{m\} \cup \mathcal{J}_{-j}} & (\epsilon_{.k})_{k \in \{m\} \cup \mathcal{J}_{-j}} \end{pmatrix}^T$ is gaussian. Indeed,
	\begin{itemize}
	    \item  $(Y_{.k})_{k \in \{m\} \cup \mathcal{J}_{-j}}$ is a gaussian vector, using the model \eqref{eq:model}. 
	    \item $(\epsilon_{.k})_{k \in \{m\} \cup \mathcal{J}_{-j}}$ is a gaussian vector, because its components are independent gaussian variables. 
	    \item for $k\neq \ell \in  \{m\} \cup \mathcal{J}_{-j}$, $\begin{pmatrix}WB_{k.} & \epsilon_{.\ell}\end{pmatrix}^T$ is a gaussian vector, because $Y_{.k} \independent \epsilon_{.\ell}$. 
	    
	    \item for $k \in \{m\} \cup \mathcal{J}_{-j}$, $\begin{pmatrix} Y_{.k} & \epsilon_{.k}\end{pmatrix}^T$ is a gaussian vector, given that $Y_{.k}$ is a linear combination of $\begin{pmatrix}WB_{k.} & \epsilon_{.k}\end{pmatrix}^T$ which is gaussian, as $WB_{k.}$ and $\epsilon_{.k}$ are independent gaussian variables.
	\end{itemize}

	Thus, 
	\begin{align*}
	\mathbb{E}[\epsilon_{.k}|Z]&=\mathbb{E}[\epsilon_{.k}]+\mathrm{Cov}(\epsilon_{.k}, Z) \mathrm{Var}(Z)^{-1} (Z - \mathbb{E}[Z]) \\
	&=\mathrm{Cov}(\epsilon_{.k}, Y_{.k}) (\mathrm{Var}(Z)^{-1})_{k.} (Z - \mathbb{E}[Z]),
	\end{align*}
	using $\mathrm{Cov}(\epsilon_{.k}, Y_{.l})=0$, for $k\neq l$. $\Gamma_Z=\mathrm{Var}(Z)^{-1}$ denotes the inverse of the covariance matrix of $Z$ and $(\Gamma_Z)_{k.}$ is its k-th row. It leads to
	\begin{equation}\label{eq:epsilonkcond}
	    \mathbb{E}[\epsilon_{.k}|Z]=\sigma^2 (\Gamma_Z)_{k.} (Z - \mathbb{E}[Z]).
	\end{equation}
	given that $\mathrm{Cov}(\epsilon_{.k}, Y_{.k})=\mathrm{Cov}(\epsilon_{.k}, WB_{k.}+\epsilon_{.k})=\mathrm{Var}(\epsilon_{.k})$. 
	
	
	Therefore, 
	\begin{multline}\label{eq:condvarToy}
	\mathrm{Var}(\mathbb{E}[Y_{.j}|Z,\Omega_{.m}=1])=\sum_{k\in \{m\}\cup\mathcal{J}_{-j}} (\mathcal{B}^c_{j\rightarrow m,\mathcal{J}_{-j}[k]})^2 \mathrm{Var}(Y_{.k})\\
	+\sum_{(k<\ell)\in \{m\}\cup\mathcal{J}_{-j}}2\mathcal{B}^c_{j\rightarrow m,\mathcal{J}_{-j}[k]}\mathcal{B}^c_{j\rightarrow m,\mathcal{J}_{-j}[\ell]}\mathrm{Cov}(Y_{.k},Y_{.\ell}) + o_{\mathrm{var}}(\sigma^2),
	\end{multline}
	where 
	\small
	\begin{multline}\label{eq:ovar}
	o_{\mathrm{var}}(\sigma^2)=-2\sigma^2\sum_{(k,\ell) \in \{m\}\cup\mathcal{J}_{-j}}\mathcal{B}_{j\rightarrow m,\mathcal{J}_{-j}[k]}^c\mathcal{B}_{j\rightarrow m,\mathcal{J}_{-j}[\ell]}^c\sum_{\ell' \in \{m\}\cup\mathcal{J}_{-j}}(\Gamma_Z)_{\ell\ell'}\mathrm{Cov}(Y_{.k},Y_{.\ell'}) \\
	+\sigma^4 \sum_{k \in \{m\}\cup\mathcal{J}_{-j}}(\mathcal{B}_{j\rightarrow m,\mathcal{J}_{-j}[k]}^c)^2 \left(\sum_{(\ell<\ell') \in \{m\}\cup\mathcal{J}_{-j}}(\Gamma_Z)_{k\ell}^2\mathrm{Var}(Y_{.\ell})
	-2 (\Gamma_Z)_{k\ell}(\Gamma_Z)_{k\ell'}\mathrm{Cov}(Y_{.\ell},Y_{.\ell'})\right) \\
	-2\sigma^4\sum_{(k<\ell) \in \{m\}\cup\mathcal{J}_{-j}}\mathcal{B}_{j\rightarrow m,\mathcal{J}_{-j}[k]}^c\mathcal{B}_{j\rightarrow m,\mathcal{J}_{-j}[\ell]}^c\sum_{(k',\ell') \in \{m\}\cup \mathcal{J}_{-j}}(\Gamma_Z)_{kk'}(\Gamma_Z)_{\ell\ell'}\mathrm{Cov}(Y_{.k'},Y_{.\ell'})
    \end{multline}
	\normalsize
	Combining \eqref{eq:espvarcondtoy} with \eqref{eq:condvarToy}, one get the following expression for the first line of the linear system
	\begin{multline}\label{eq:vartoy}
	(\mathcal{B}^c_{j\rightarrow m,\mathcal{J}_{-j}[m]})^2\mathrm{Var}(Y_{.m})+\sum_{j'\in \mathcal{J}_{-j}}2\mathcal{B}^c_{j\rightarrow m,\mathcal{J}_{-j}[j']}\mathcal{B}^c_{j\rightarrow m,\mathcal{J}_{-j}[m]}\mathrm{Cov}(Y_{.j'},Y_{.m})
	\\
	=\mathrm{Var}(Y_{.j})-Q^c-(\mathcal{B}^c_{j\rightarrow m,\mathcal{J}_{-j}[\mathcal{J}_{-j}]})^T \mathrm{Var}(Y_{\mathcal{J}_{-j}})\mathcal{B}^c_{j\rightarrow m,\mathcal{J}_{-j}[\mathcal{J}_{-j}]}-o_{\mathrm{var}}(\sigma^2)
	\end{multline}
	
	\textbf{Deriving equations for the covariances.}	
	Let $k$ be an element of $\mathcal{J}$, our objective is to express $\mathrm{Cov}(Y_{.m},Y_{.k})$ from $\mathrm{Var}(Y_{.m})$, $\alpha_m$, $(\alpha_k)_{k \in \mathcal{J}}$ and $(\mathrm{Cov}(Y_{.m},Y_{.k}))_{k\in \{m\}\cup\mathcal{J}}$. 
	\begin{align}
	\nonumber \notag
	\mathrm{Cov}(Y_{.m},Y_{.k})&=\mathbb{E}[Y_{.m}Y_{.k}]-\mathbb{E}[Y_{.m}]\mathbb{E}[Y_{k}] \\
	\notag
	&=\mathbb{E}[\mathbb{E}[Y_{.m}Y_{.k}|Z]]-\mathbb{E}[Y_{.m}]\mathbb{E}[Y_{.k}] \\
	\label{eq:covbegintoy}
	&=\mathbb{E}[Y_{.m}\mathbb{E}[Y_{.k}|Z]]-\mathbb{E}[Y_{.m}]\mathbb{E}[Y_{.k}],
	\end{align}
	with $Z=(Y_{.\ell})_{\ell \in \widebar{\{k\}}}$.
	
	For the first term in \eqref{eq:covbegintoy}, one has
	\begin{align*}
	\mathbb{E}[Y_{.m}\mathbb{E}[Y_{.k}|Z]]\overset{(i)}{=}& \mathbb{E}[Y_{.m}\mathbb{E}[Y_{.k}|Z,\Omega_{.m}=1]] \\
	\overset{(ii)}{=}&\mathbb{E}\left[Y_{.m}\left(\mathcal{B}^c_{k\rightarrow m,\mathcal{J}_{-k}[0]}
    +\sum_{\ell\in \{m\}\cup\mathcal{J}_{-k}} \mathcal{B}^c_{k\rightarrow m,\mathcal{J}_{-k}[\ell]} Y_{.\ell}  + \mathbb{E}[\zeta_k^c|Z]\right)\right] \\
	\overset{(iii)}{=}&\mathcal{B}^c_{k\rightarrow m,\mathcal{J}_{-k}[0]}\mathbb{E}[Y_{.m}]+\mathcal{B}^c_{k\rightarrow m,\mathcal{J}_{-k}[m]}\mathbb{E}[Y_{.m}^2] \\
	&+\sum_{\ell\in \mathcal{J}_{-k}} \mathcal{B}^c_{k\rightarrow m,\mathcal{J}_{-k}[\ell]}\mathbb{E}[Y_{.m}Y_{.\ell}]+o_{\mathrm{cov},k}(\sigma^2)
	\end{align*}
	with $\zeta_k^c=-\sum_{\ell\in\mathcal{J}_{-k}} \mathcal{B}^c_{k\rightarrow m,\mathcal{J}_{-k}[\ell]} \epsilon_{.\ell} - \mathcal{B}^c_{k\rightarrow m,\mathcal{J}_{-k}[m]} \epsilon_{.m}+ \epsilon_{.k}.$
	
	Assumption \ref{hyp2} and Definition \ref{def:coeff_complete_Case}  are used for (i) and (ii) respectively. For (iii), using \eqref{eq:epsilonkcond}, one has 
	$$\mathbb{E}[Y_{.m}\mathbb{E}[\zeta_k^c|Z]]=\mathbb{E}\left[Y_{.m}\left(-\sum_{\ell \in \mathcal{J}_{-k}} \mathcal{B}^c_{k\rightarrow m,\mathcal{J}_{-k}[\ell]}\sigma^2 (\Gamma_Z)_{\ell.} (Z - \mathbb{E}[Z])-\mathcal{B}^c_{k\rightarrow m,\mathcal{J}_{-k}[m]} \epsilon_{.m}\right)\right],$$
	given that $\mathbb{E}[\epsilon_{.k}|Z]=\mathbb{E}[\epsilon_{.k}]=0$ by independence. 
	\begin{multline*}
	\mathbb{E}[Y_{.m}\mathbb{E}[\zeta_k^c|Z]]\\
	=-\sigma^2\left(\mathbb{E}\left[\sum_{\ell \in \mathcal{J}_{-k}}\mathcal{B}^c_{k\rightarrow m,\mathcal{J}_{-k}[\ell]}Y_{.m}\sum_{\ell'\in \mathcal{J}_{-k}}(\Gamma_Z)_{\ell\ell'} (Y_{.\ell'} - \mathbb{E}[Y_{.\ell'}])\right]+\mathcal{B}^c_{k\rightarrow m,\mathcal{J}_{-k}[m]}\right),
	\end{multline*}
	because $\mathbb{E}[Y_{.m}\epsilon_{.m}]=\mathrm{Cov}(Y_{.m},\epsilon_{.m})+\mathbb{E}[Y_{.m}]\mathbb{E}[\epsilon_{.m}]=\mathrm{Cov}(Y_{.m},\epsilon_{.m})=\sigma^2$. In addition, 
	\begin{align*}
	&\mathbb{E}\left[\sum_{\ell \in \mathcal{J}_{-k}}\mathcal{B}^c_{k\rightarrow m,\mathcal{J}_{-k}[\ell]}Y_{.m}\sum_{\ell'\in \mathcal{J}_{-k}}(\Gamma_Z)_{\ell\ell'} (Y_{.\ell'} - \mathbb{E}[Y_{.\ell'}])\right]\\
	&=\sum_{\ell \in \mathcal{J}_{-k}}\sum_{\ell'\in \mathcal{J}_{-k}}(\Gamma_Z)_{\ell\ell'}\mathcal{B}^c_{k\rightarrow m,\mathcal{J}_{-k}[\ell]}\left(\mathrm{Cov}\left(Y_{.m},Y_{.\ell'}\right) + \mathbb{E}[Y_{.m}]\mathbb{E}[(Y_{.\ell'} - \mathbb{E}[Y_{.\ell'}])]\right) \\
	&=\sum_{\ell \in \mathcal{J}_{-k}}\sum_{\ell'\in \mathcal{J}_{-k}}(\Gamma_Z)_{\ell\ell'}\mathcal{B}^c_{k\rightarrow m,\mathcal{J}_{-k}[\ell]}\mathrm{Cov}\left(Y_{.m},Y_{.\ell'}\right)
	\end{align*}
	It implies that, in (iii), 
	\begin{equation}\label{eq:ocov}
	o_{\mathrm{cov},k}(\sigma^2)=-\sigma^2\left(\sum_{\ell \in \mathcal{J}_{-k}}\sum_{\ell'\in \mathcal{J}_{-k}}(\Gamma_Z)_{\ell\ell'}\mathcal{B}^c_{k\rightarrow m,\mathcal{J}_{-k}[\ell]}\mathrm{Cov}\left(Y_{.m},Y_{.\ell'}\right)+\mathcal{B}^c_{k\rightarrow m,\mathcal{J}_{-k}[m]}\right)
	\end{equation}
	
	
	Equation \eqref{eq:covbegintoy} leads thus to
	\begin{multline}
	\label{eq:cov2toyproof}
	\mathrm{Cov}(Y_{.m},Y_{.k})=\mathcal{B}^c_{k\rightarrow m,\mathcal{J}_{-k}[0]}\mathbb{E}[Y_{.m}]+\mathcal{B}^c_{k\rightarrow m,\mathcal{J}_{-k}[m]}(\mathrm{Var}(Y_{.m})+\mathbb{E}[Y_{.m}]^2) \\
	+\sum_{\ell\in \mathcal{J}_{-k}} \mathcal{B}^c_{k\rightarrow m,\mathcal{J}_{-k}[\ell]}(\mathrm{Cov}(Y_{.m},Y_{.\ell})+\mathbb{E}[Y_{.m}]\mathbb{E}[Y_{.\ell}]) - \mathbb{E}[Y_{.m}]\mathbb{E}[Y_{.k}]+o_{\mathrm{cov},k}(\sigma^2),
	\end{multline}
	which can be rewritten as
	\begin{multline}
	\label{eq:cov2toyproof}
	\mathrm{Cov}(Y_{.m},Y_{.k})-\mathcal{B}^c_{k\rightarrow m,\mathcal{J}_{-k}[m]}\mathrm{Var}(Y_{.m}) -\sum_{\ell\in \mathcal{J}_{-k}} \mathcal{B}^c_{k\rightarrow m,\mathcal{J}_{-k}[\ell]}\mathrm{Cov}(Y_{.m},Y_{.\ell}) \\
	=((\mathcal{B}^c_{k\rightarrow m,\mathcal{J}_{-k}})^T\begin{pmatrix}
	1 & \mathbb{E}[Y_{.m}] & (\mathbb{E}[Y_{.\ell}])_{\ell \in \mathcal{J}_{-k}}
	\end{pmatrix}^T -\mathbb{E}[Y_{.k}])\mathbb{E}[Y_{.m}]
	+o_{\mathrm{cov},k}(\sigma^2),
	\end{multline}
	Combining Equations \eqref{eq:vartoy} and \eqref{eq:cov2toyproof} forms the desired matrix system \eqref{eq:varcovformula}.
	
	From these formulae for $\begin{pmatrix}
			{\mathrm{Var}}(Y_{.m}) &
			{\mathrm{Cov}}(Y_{.m},(Y_{.k})_{k\in \mathcal{J}})
		\end{pmatrix}^T$, assuming that ${M}_j$ is invertible and that $\sigma^2$ tends to zero, one get their estimators $\begin{pmatrix}
			\widehat{\mathrm{Var}}(Y_{.m}) &
			\widehat{\mathrm{Cov}}(Y_{.m},(Y_{.k})_{k\in \mathcal{J}})
		\end{pmatrix}^T$ defined in \eqref{eq:estimcov_main}.
	
	As for the consistency, $\hat{\alpha}_m$ is a consistent estimator for $\alpha_m$ by using Proposition \ref{prop:mean_formula_general}. The estimators in \eqref{eq:estimcov_main} are consistent, under Assumption \ref{hyp3} and \ref{hyp4}.
\end{proof}

\subsection{Proof of Proposition \ref{prop:covmiss}} 
\label{sec:proofcov2}

For deriving the covariance between a MNAR variable and a MNAR or not pivot variable, we assume the following
\begin{enumerate}[label=\textbf{A\arabic*.}]
    \setcounter{enumi}{4}
    \item \label{hyp5} $\forall m \in \mathcal{M}$, $\forall \ell \in \widebar{\mathcal{J}}$, for all set $\mathcal{H} \subset \mathcal{J}_{-j}$ such that $|\mathcal{H}|=r-2$,  $\begin{pmatrix} B_{.m} & B_{.\ell} & (B_{.j'})_{j' \in \mathcal{H}} \end{pmatrix}$ is invertible,
    \item \label{hyp6} 
    $\forall k \in \widebar{\mathcal{J}}\setminus\mathcal{M}$, $\forall j \in \mathcal{J}$,  for all set $\mathcal{H} \subset \mathcal{J}_{-j}$ such that $|\mathcal{H}|=r-2$,
    \, $Y_{.j} \independent \Omega_{.k}| (Y_{.\ell})_{\ell \in \widebar{\{j\}}}.$
    \item \label{hyp6bis} $\forall k,\ell \in \widebar{J}, \quad  k\neq l, \: \Omega_{.k} \independent \Omega_{.\ell} | Y$
    \item \label{hyp8} $\forall j \in \mathcal{J}, \forall m \in \mathcal{M}, \forall \ell \in \widebar{\mathcal{J}}$, for all set $\mathcal{H} \subset \mathcal{J}_{-j}$ such that $|\mathcal{H}|=r-2$, the complete-case coefficients $\mathcal{B}_{j\rightarrow m,\ell,\mathcal{H}[0]}^c$ and $\mathcal{B}_{j\rightarrow m,\ell,\mathcal{H}[k]}^c, k\neq j, k \in \{m,\ell\}\cup\mathcal{H}$ can be consistently estimated. (Here, note that the complete case is when $\Omega_{.m}=1$ and $\Omega_{.\ell}=1$.)
    \item  \label{hyp7} For the variables neither MNAR nor pivot, their means $(\alpha_{k})_{k\in\widebar{\mathcal{J}}\setminus\mathcal{M}}$, variances $(\mathrm{Var}(Y_{.k}))_{k\in\widebar{\mathcal{J}}\setminus\mathcal{M}}$ and covariances $(\mathrm{Cov}(Y_{.k},Y_{.k'}))_{k \neq k'\in\widebar{\mathcal{J}}\setminus\mathcal{M}}$ can be consistently estimated. The covariances between these variables and the pivot variables $(\mathrm{Cov}(Y_{.j},Y_{.k}))_{j \in \mathcal{J}, k\in\widebar{\mathcal{J}}\setminus\mathcal{M}}$ are also consistent. 
\end{enumerate}

\begin{prop}[Covariance between a MNAR variable and a MNAR or not pivot variable]\label{prop:covmiss}
    Consider the PPCA model \eqref{eq:model}. Under Assumptions \ref{hyp2}, \ref{hyp5}, \ref{hyp6} and \ref{hyp6bis}, an estimator of the covariance between a MNAR variable $Y_{.m}$, for $m \in \mathcal{M}$, and a variable $Y_{.\ell}$, for $\ell \in \widebar{\mathcal{J}}\setminus \{ m\}$, can be constructed as follows: choose $j \in \mathcal{J}$ and $r-2$ variable indexes in $\mathcal{J}_{-j}$ and compute:
	\begin{multline}\label{eq:covmissvar}
	\widehat{\mathrm{Cov}}(Y_{.m},Y_{.\ell})=\frac{1}{\hat{K}}\widehat{\mathrm{Var}}(Y_{.j})-\hat{q}^c
	-\sum_{k\in \{m,\ell\}\cup\mathcal{H}} (\hat{\mathcal{B}}^c_{j\rightarrow m,\ell,\mathcal{H}[k]})^2 \widehat{\mathrm{Var}}(Y_{.k})\\
	-\sum_{k<k',k \in \{m,\ell\}\cup\mathcal{H}, k'\in \mathcal{H}}2\hat{\mathcal{B}}^c_{j\rightarrow m,\ell,\mathcal{H}[k]}\hat{\mathcal{B}}^c_{j\rightarrow m,\mathcal{H}[k']}\widehat{\mathrm{Cov}}(Y_{.k},Y_{.k'}),
	\end{multline}
    assuming that $\sigma^2$ tends to zero and with $\hat{K}=2\hat{\mathcal{B}}^c_{j\rightarrow m,\ell,\mathcal{H}[m]}\hat{\mathcal{B}}^c_{j\rightarrow m,\mathcal{H}[\ell]}$ and 
    \begin{multline*}
	\hat{q}^{c}=\left(\widehat{\mathrm{Var}}(Y_{.j})\big| \Omega_{.m}=1,\Omega_{.\ell}=1\right)
	\\
	-\left(\widehat{\mathrm{Cov}}((Y_{.k})_{k \in \widebar{\{j\}}},Y_{.j}) \widehat{\mathrm{Var}}((Y_{.k})_{k \in \widebar{\{j\}}})^{-1} \widehat{\mathrm{Cov}}((Y_{.k})_{k \in \widebar{\{j\}}},Y_{.j})^T \big| \Omega_{.m}=1,\Omega_{.\ell}=1\right),
	\end{multline*}
	given that $K$ estimated by $\hat{K}$ is non zero.
	
    Under the additional Assumptions \ref{hyp3}, \ref{hyp8} and \ref{hyp7}.
	this estimator given in \eqref{eq:covmissvar} is consistent. 
\end{prop}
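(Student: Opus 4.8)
The plan is to reproduce, in this slightly larger configuration, the argument behind Proposition~\ref{prop:var_formula_general}: I would regress the pivot variable $Y_{.j}$ onto $Y_{.m}$, $Y_{.\ell}$ and the $r-2$ pivot variables indexed by $\mathcal{H}\subset\mathcal{J}_{-j}$, chosen so that $\mathrm{Cov}(Y_{.m},Y_{.\ell})$ is the only unknown cross-covariance appearing in a single variance decomposition of $Y_{.j}$.

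First I would mimic Lemma~\ref{lem:PCAlinear}: by Assumption~\ref{hyp5} the reduced block $\begin{pmatrix} B_{.m} & B_{.\ell} & (B_{.j'})_{j'\in\mathcal{H}}\end{pmatrix}$ is invertible, so inverting it and substituting the latent variables into the expression of $Y_{.j}$ coming from \eqref{eq:model} yields a linear relation $Y_{.j}=\mathcal{B}_{j\rightarrow m,\ell,\mathcal{H}[0]}+\mathcal{B}_{j\rightarrow m,\ell,\mathcal{H}[m]}Y_{.m}+\mathcal{B}_{j\rightarrow m,\ell,\mathcal{H}[\ell]}Y_{.\ell}+\sum_{j'\in\mathcal{H}}\mathcal{B}_{j\rightarrow m,\ell,\mathcal{H}[j']}Y_{.j'}+\zeta$, with $\zeta$ a linear combination of the noise columns $\epsilon_{.k}$; the same regression restricted to $\{\Omega_{.m}=1,\Omega_{.\ell}=1\}$ defines the complete-case coefficients $\mathcal{B}^c_{j\rightarrow m,\ell,\mathcal{H}[\cdot]}$. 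Setting $Z=(Y_{.k})_{k\in\widebar{\{j\}}}$, the next and crucial step is to establish the conditional independence $Y_{.j}\independent(\Omega_{.m},\Omega_{.\ell})\mid Z$; this combines \ref{hyp2} (which gives $Y_{.j}\independent\Omega_{.m}\mid Z$), \ref{hyp6} (which gives $Y_{.j}\independent\Omega_{.\ell}\mid Z$) and \ref{hyp6bis} (which gives $\Omega_{.m}\independent\Omega_{.\ell}\mid Y$), by the same density-factorisation manipulations used throughout Appendix~\ref{sec:proofidentif}. With this in hand, conditioning additionally on $\{\Omega_{.m}=1,\Omega_{.\ell}=1\}$ does not change the conditional law of $Y_{.j}$ given $Z$, so $\mathbb{E}[Y_{.j}\mid Z]=\mathbb{E}[Y_{.j}\mid Z,\Omega_{.m}=1,\Omega_{.\ell}=1]$ and likewise for the conditional variance. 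I would then apply the law of total variance $\mathrm{Var}(Y_{.j})=\mathbb{E}[\mathrm{Var}(Y_{.j}\mid Z)]+\mathrm{Var}(\mathbb{E}[Y_{.j}\mid Z])$. The first term is the deterministic quantity $q^c$ from the statement, obtained from the Gaussian conditional-variance formula applied in the complete case (exactly as $Q^c$ was derived in Proposition~\ref{prop:var_formula_general}). For the second term, since $Y_{.m}$, $Y_{.\ell}$ and $(Y_{.j'})_{j'\in\mathcal{H}}$ are $Z$-measurable, I would plug in the complete-case linear link and handle $\mathbb{E}[\zeta^c\mid Z]$ as in Proposition~\ref{prop:var_formula_general}, using that the vector stacking the relevant $Y_{.k}$'s and $\epsilon_{.k}$'s is Gaussian and that $\mathbb{E}[\epsilon_{.k}\mid Z]=\sigma^2(\Gamma_Z)_{k.}(Z-\mathbb{E}[Z])$; this contributes only an $o(\sigma^2)$ remainder. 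The outcome is that $\mathrm{Var}(Y_{.j})$ equals $q^c$ plus $\sum_{k\in\{m,\ell\}\cup\mathcal{H}}(\mathcal{B}^c_{j\rightarrow m,\ell,\mathcal{H}[k]})^2\mathrm{Var}(Y_{.k})$ plus the off-diagonal terms $2\mathcal{B}^c_{j\rightarrow m,\ell,\mathcal{H}[k]}\mathcal{B}^c_{j\rightarrow m,\ell,\mathcal{H}[k']}\mathrm{Cov}(Y_{.k},Y_{.k'})$ over $k<k'$ in $\{m,\ell\}\cup\mathcal{H}$, plus $o(\sigma^2)$. The coefficient of the single unknown cross-covariance $\mathrm{Cov}(Y_{.m},Y_{.\ell})$ is $K=2\mathcal{B}^c_{j\rightarrow m,\ell,\mathcal{H}[m]}\mathcal{B}^c_{j\rightarrow m,\ell,\mathcal{H}[\ell]}$, so isolating that covariance, dividing by $K\neq0$ and letting $\sigma^2\to0$ gives formula~\eqref{eq:covmissvar}.

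Consistency then follows from the continuous-mapping theorem once every quantity on the right-hand side of \eqref{eq:covmissvar} is consistently estimated: $\widehat{\mathrm{Var}}(Y_{.j})$, $\widehat{\mathrm{Var}}(Y_{.k})$ and $\widehat{\mathrm{Cov}}(Y_{.k},Y_{.k'})$ for $k,k'\in\mathcal{H}\subset\mathcal{J}$ are consistent by \ref{hyp4}; $\hat q^c$ is a smooth function of complete-case empirical moments; the complete-case coefficients $\hat{\mathcal{B}}^c_{j\rightarrow m,\ell,\mathcal{H}[\cdot]}$ are consistent by \ref{hyp8}; $\widehat{\mathrm{Var}}(Y_{.m})$ and $\widehat{\mathrm{Cov}}(Y_{.m},Y_{.k})$ for $k\in\mathcal{H}$ are consistent by Proposition~\ref{prop:var_formula_general} (under \ref{hyp3} and \ref{hyp4}); and $\widehat{\mathrm{Var}}(Y_{.\ell})$ together with $\widehat{\mathrm{Cov}}(Y_{.\ell},Y_{.k})$ for $k\in\mathcal{H}$ are consistent either by Proposition~\ref{prop:var_formula_general} when $\ell\in\mathcal{M}$, or by \ref{hyp7} when $\ell\in\widebar{\mathcal{J}}\setminus\mathcal{M}$. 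The step I expect to be the main obstacle is the three-way conditional independence $Y_{.j}\independent(\Omega_{.m},\Omega_{.\ell})\mid Z$; everything downstream is bookkeeping of the same kind as in Proposition~\ref{prop:var_formula_general}, the only extra care being to track whether each auxiliary second moment comes from \ref{hyp4}, from Proposition~\ref{prop:var_formula_general}, or from \ref{hyp7}.
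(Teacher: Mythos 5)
Your proposal is correct and follows essentially the same route as the paper's proof: the same law-of-total-variance decomposition of $\mathrm{Var}(Y_{.j})$ with $Z=(Y_{.k})_{k\in\widebar{\{j\}}}$, the same use of \ref{hyp2}/\ref{hyp6} together with \ref{hyp6bis} to justify conditioning on $\{\Omega_{.m}=1,\Omega_{.\ell}=1\}$, the same treatment of $\mathbb{E}[\epsilon_{.k}\mid Z]$ producing the $o(\sigma^2)$ remainder, and the same allocation of consistency to \ref{hyp3}, \ref{hyp4}, \ref{hyp8}, \ref{hyp7} and Proposition \ref{prop:var_formula_general}. The only cosmetic difference is that the paper separates the conditional-independence argument into the two cases $\ell\in\mathcal{M}$ (using \ref{hyp2}, \ref{hyp6bis}) and $\ell\notin\mathcal{M}$ (using \ref{hyp6}, \ref{hyp6bis}), which you fold into one statement.
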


\begin{proof}
    Let $\mathcal{H}$ bet the set of the $r-2$ variable indexes. One has $\mathcal{H}\subset\mathcal{J}_{-j}$. 
    We use the same strategy as the proof for Proposition \ref{prop:var_formula_general} (paragraph for deriving an equation for the variance). 
    
    To derive a formula for $\mathrm{Cov}(Y_{.m},Y_{.\ell})$, the idea is to express $\mathrm{Var}(Y_{.j})$ from  $(\mathrm{Var}(Y_{.k}))_{k \in \{m,l\}\cup\mathcal{H}}$ and $(\mathrm{Cov}(Y_{.k},Y_{.k'}))_{k \neq k' \in \{m,\ell\}\cup\mathcal{H}}$. 
    
    The law of total variance reads as 
	\begin{equation}\label{eq:var2miss}
	\mathrm{Var}(Y_{.j})=\mathbb{E}[\mathrm{Var}(Y_{.j}|Z)]+\mathrm{Var}(\mathbb{E}[Y_{.j}|Z]),
	\end{equation}
	with $Z=(Y_{.k})_{k \in \widebar{\{j\}}}$.
	
	For the first term in \eqref{eq:var2miss}, one uses
	$$Y_{.j}\independent \Omega_{.m}, \Omega_{.l} | Z.$$
	If $Y_{.m}$ and $Y_{.\ell}$ are both MNAR variables, this conditional independance is obtained using Assumption \ref{hyp2} and \ref{hyp6bis}. Otherwise, if $Y_{.\ell}$ is not a MNAR variable, Assumption \ref{hyp6} and \ref{hyp6bis} lead to the desired result. It implies
	$$\mathrm{Var}(Y_{.j}|Z)=\mathrm{Var}(Y_{.j}|Z,\Omega_{.m}=1,\Omega_{.\ell}=1).$$
	The conditional variance for a Gaussian vector gives
	$$\mathrm{Var}(Y_{.j}|Z)=\mathrm{Var}(Y_{.j})-\mathrm{Cov}(Z,Y_{.j}) \mathrm{Var}(Z)^{-1} \mathrm{Cov}(Z,Y_{.j})^T,$$
	implying that $$\mathrm{Var}(Y_{.j}|Z,\Omega_{.m}=1,\Omega_{.\ell}=1)=\left(\mathrm{Var}(Y_{.j})-\mathrm{Cov}(Z,Y_{.j}) \mathrm{Var}(Z)^{-1} \mathrm{Cov}(Z,Y_{.j})^T\big| \Omega_{.m}=1,\Omega_{.\ell}=1 \right)$$ and then, as deterministic quantity,
	\begin{equation}\label{eq:espvarcondtoy2}
	\mathbb{E}[\mathrm{Var}(Y_{.j}|Z)]=q^c
	\end{equation}

	with
    \begin{multline*}
	q^{c}=\left(\mathrm{Var}(Y_{.j})\big| \Omega_{.m}=1,\Omega_{.\ell}=1\right)
	\\
	-\left(\mathrm{Cov}((Y_{.k})_{k \in \widebar{\{j\}}},Y_{.j}) \mathrm{Var}((Y_{.k})_{k \in \widebar{\{j\}}})^{-1} \mathrm{Cov}((Y_{.k})_{k \in \widebar{\{j\}}},Y_{.j})^T \big| \Omega_{.m}=1,\Omega_{.\ell}=1\right).
	\end{multline*}
	
	For the second term of \eqref{eq:varY2toy}, remark that \ref{hyp2}, \ref{hyp6} and \ref{hyp6bis} implies that
	$$\mathrm{Var}(\mathbb{E}[Y_{.j}|Z])=\mathrm{Var}(\mathbb{E}[Y_{.j}|Z,\Omega_{.m}=1,\Omega_{.\ell}=1]),$$ 
	and
	$$\mathrm{Var}(\mathbb{E}[Y_{.j}|Z,\Omega_{.m}=1,\Omega_{.\ell}=1])=\mathrm{Var}\left(\mathbb{E}\left[\mathcal{B}^c_{j\rightarrow m,\ell,\mathcal{H}[0]}
    +\sum_{k\in \{m,\ell\}\cup\mathcal{H}} \mathcal{B}^c_{j\rightarrow m,\ell,\mathcal{H}[k]} Y_{.k}  + \zeta_j^c\bigg|Z\right]\right),$$
    \textit{i.e.}
    \begin{multline*}
    \mathrm{Var}(\mathbb{E}[Y_{.j}|Z,\Omega_{.m}=1,\Omega_{.\ell}=1])\\=\mathrm{Var}\left(\sum_{k\in \{m,\ell\}\cup\mathcal{H}} \mathcal{B}^c_{j\rightarrow m,\ell,\mathcal{H}[k]} Y_{.k}-\sum_{k\in \{m,\ell\}\cup\mathcal{H}} \mathcal{B}^c_{j\rightarrow m,\ell,\mathcal{H}[k]} \mathbb{E}[\epsilon_{.k}|Z] +\mathcal{B}^c_{j\rightarrow m,\ell,\mathcal{H}[0]}+\mathbb{E}[\epsilon_{.j}]\right)
    \end{multline*}
	One uses the same reasoning as in the proof of Proposition \ref{prop:var_formula_general} (paragraph for deriving an equation for the variance) to get
	\begin{multline}\label{eq:condvarToy2}
	\mathrm{Var}(\mathbb{E}[Y_{.j}|Z,\Omega_{.m}=1,\Omega_{.\ell}=1])=\sum_{k\in \{m,\ell\}\cup\mathcal{H}} (\mathcal{B}^c_{j\rightarrow m,\ell,\mathcal{H}[k]})^2 \mathrm{Var}(Y_{.k})\\
	+\sum_{k<k'\in \{m,\ell\}\cup\mathcal{H}}2\mathcal{B}^c_{j\rightarrow m,\ell,\mathcal{H}[k]}\mathcal{B}^c_{j\rightarrow m,\ell,\mathcal{H}[k']}\mathrm{Cov}(Y_{.k},Y_{.k'}) + o_{\mathrm{covmiss}}(\sigma^2),
	\end{multline}
	where
	\begin{multline}\label{eq:ocovmiss}
	o_{\mathrm{covmiss}}(\sigma^2)=-2\sigma^2\sum_{(k,k') \in \{m,\ell\}\cup\mathcal{H}}\mathcal{B}_{j\rightarrow m,\ell,\mathcal{H}[k]}^c\mathcal{B}_{j\rightarrow m,\ell,\mathcal{H}[k']}^c\sum_{\ell' \in \{m,\ell\}\cup\mathcal{H}}(\Gamma_Z)_{k'\ell'}\mathrm{Cov}(Y_{.k},Y_{.\ell'}) \\
	+\sigma^4 \sum_{k \in \{m,\ell\}\cup\mathcal{H}}(\mathcal{B}_{j\rightarrow m,\ell,\mathcal{H}[k]}^c)^2 \left(\sum_{(k'<\ell') \in \{m,\ell\}\cup\mathcal{H}}(\Gamma_Z)_{kk'}^2\mathrm{Var}(Y_{.k'})
	-2 (\Gamma_Z)_{kk'}(\Gamma_Z)_{k\ell'}\mathrm{Cov}(Y_{.k'},Y_{.\ell'})\right) \\
	-2\sigma^4\sum_{(k<k') \in \{m,\ell\}\cup\mathcal{H}}\mathcal{B}_{j\rightarrow m,\ell,\mathcal{H}[k]}^c\mathcal{B}_{j\rightarrow m,\ell,\mathcal{H}[k']}^c\sum_{(k'',\ell') \in \{m,\ell\}\cup \mathcal{H}}(\Gamma_Z)_{kk''}(\Gamma_Z)_{k'\ell'}\mathrm{Cov}(Y_{.k''},Y_{.\ell'})
    \end{multline}
	
	Combining \eqref{eq:var2miss}, \eqref{eq:espvarcondtoy2} and \eqref{eq:condvarToy2}, one get the following formula for $\mathrm{Cov}(Y_{.m},Y_{.\ell})$,
	\begin{multline*}
	2\mathcal{B}^c_{j\rightarrow m,\ell,\mathcal{H}[m]}\mathcal{B}^c_{j\rightarrow m,\mathcal{H}[\ell]}\mathrm{Cov}(Y_{.m},Y_{.\ell})=\mathrm{Var}(Y_{.j})-q^c
	-\sum_{k\in \{m,\ell\}\cup\mathcal{H}} (\mathcal{B}^c_{j\rightarrow m,\ell,\mathcal{H}[k]})^2 \mathrm{Var}(Y_{.k})\\
	-\sum_{k<k',k \in \{m,\ell\}\cup\mathcal{H}, k'\in \mathcal{H}}2\mathcal{B}^c_{j\rightarrow m,\ell,\mathcal{H}[k]}\mathcal{B}^c_{j\rightarrow m,\mathcal{H}[k']}\mathrm{Cov}(Y_{.k},Y_{.k'})-o_{\mathrm{covmiss}}(\sigma^2)
	\end{multline*}
	
	An estimator of $\mathrm{Cov}(Y_{.m},Y_{.l})$ is then derived as in \eqref{eq:covmissvar}, given that $\sigma^2$ tends to zero and $K=\mathcal{B}^c_{j\rightarrow m,\ell,\mathcal{H}[m]}\mathcal{B}^c_{j\rightarrow m,\mathcal{H}[\ell]}$ is non zero. 
	
	 We use the consistent estimators defined in Proposition \ref{prop:var_formula_general} for $\mathrm{Var}(Y_{.m})$ and $\mathrm{Cov}(Y_{.m},Y_{.k})_{k \in \mathcal{H}}$. If $Y_{.l}$ is also a MNAR variable, Proposition \ref{prop:var_formula_general} is applied for estimating  $\mathrm{Var}(Y_{.l})$ and $\mathrm{Cov}(Y_{.l},Y_{.k})_{k \in \mathcal{H}}$. Otherwise, if $Y_{.l}$ is not a MNAR variable, we use \ref{hyp7}. 
	 
	 Eventually, \ref{hyp3} and \ref{hyp8} leads to the consistency of $\widehat{\mathrm{Cov}}(Y_{.m},Y_{.l})$.
\end{proof}

\subsection{Extension to more general mechanisms for the not MNAR variables}
\label{sec:extensionmecha}

The results of Proposition \ref{prop:mean_formula_general}, \ref{prop:var_formula_general} and \ref{prop:covmiss} can be extended to a more general setting than the one presented in Section \ref{sec:model}. The pivot variables may be assumed to be MCAR (or observed). The variables which are neither MNAR nor pivot may be observed or satisfying
\begin{equation}\label{eq:mechanismother}
\forall \ell \in \widebar{\mathcal{J}}\setminus\mathcal{M}, \forall i \in \{1,\dots, n\},  \quad  \mathbb{P}(\Omega_{i\ell}=1|Y_{i.})=\mathbb{P}(\Omega_{i\ell}=1|(Y_{ik})_{k \in \widebar{\mathcal{J}}\setminus\{\ell\}\cup\mathcal{M}}),
\end{equation}
\textit{i.e.} they are MCAR or MAR but their missing-data mechanisms may not depend on the pivot variables.

The proofs are similar and not presented here for the sake of brevity. 

Note that the main difference is that the complete case has to be extended.
For instance, for $j\in \mathcal{J}$ and $k \in \mathcal{J}_{-j}$, the coefficients standing respectively for the intercept and the effects of $Y_{.j}$ on $(Y_{.m}, (Y_{.j'})_{j' \in \mathcal{J}_{-j}})$ in the complete case, \textit{i.e.} when $\Omega_{.m}=1, (\Omega_{j}=1)_{j\in \mathcal{J}}
$ are in this general setting defined as follows
$$
    \left(Y_{.j}|\Omega_{.m}= 1,(\Omega_{j}=1)_{j\in \mathcal{J}}\right) := \mathcal{B}^c_{j\rightarrow m,\mathcal{J}_{-j}[0]}
    +\sum_{j'\in \mathcal{J}_{-j}} \mathcal{B}^c_{j\rightarrow m,\mathcal{J}_{-j}[j']} Y_{.j'} + \mathcal{B}^c_{j\rightarrow m,\mathcal{J}_{-j}[m]} Y_{.m} + \zeta^c,$$
with $\zeta^c=-\sum_{j'\in\mathcal{J}_{-j}} \mathcal{B}^c_{j\rightarrow m,\mathcal{J}_{-j}[j']} \epsilon_{.j'} - \mathcal{B}^c_{j\rightarrow m,\mathcal{J}_{-j}[m]} \epsilon_{.m}+ \epsilon_{.j}.$

\section{Other numerical experiments}
\label{sec:othernumexp}

\paragraph{Robustness to noise.}
Considering the same setting as in Section \ref{sec:simu_synthdata} ($n=1000$, $p=10$, $r=2$ and seven self-masked MNAR variables), the methods are tried for different noise levels $\sigma^2\in\{0.1,0.3,0.5,0.7,1\}$. The results are presented for one missing variable and for all the other ones, the results are similar. 
In Figure \ref{fig:MeanVarianceNoise}, Algorithm \ref{alg:imputation} is the only method that does not give a biased estimate of the mean and the variance regardless of the noise level. In Figure \ref{fig:CovariancesNoise}, despite a larger bias in the estimation of the covariance between a missing variable and a pivot one as the noise level increases, Algorithm \ref{alg:imputation} outperforms all the other methods, regarding the estimation of the covariance between two missing variables. Note that the formula for the estimate of the covariance between two missing variables relies on the one for the estimate of the variance, but both differ from the one used for the covarance estimation between a missing variable and a pivot one.
As expected, in Figures \ref{fig:MSECorrNoise}, estimation deteriorates as the data gets noisier and then the loading matrix estimation and the prediction error get closer to the results of mean imputation. In term of prediction error, the proposed method yet remains competitive in regards of the approaches \ref{methEMMAR} and \ref{methSoft}. Overall, when the noise level increases, the exogeneity will be worse and that ignoring it in practice can be made to the detriment of performance. 

\begin{figure}[H]
\centering
\includegraphics[width=1\textwidth]{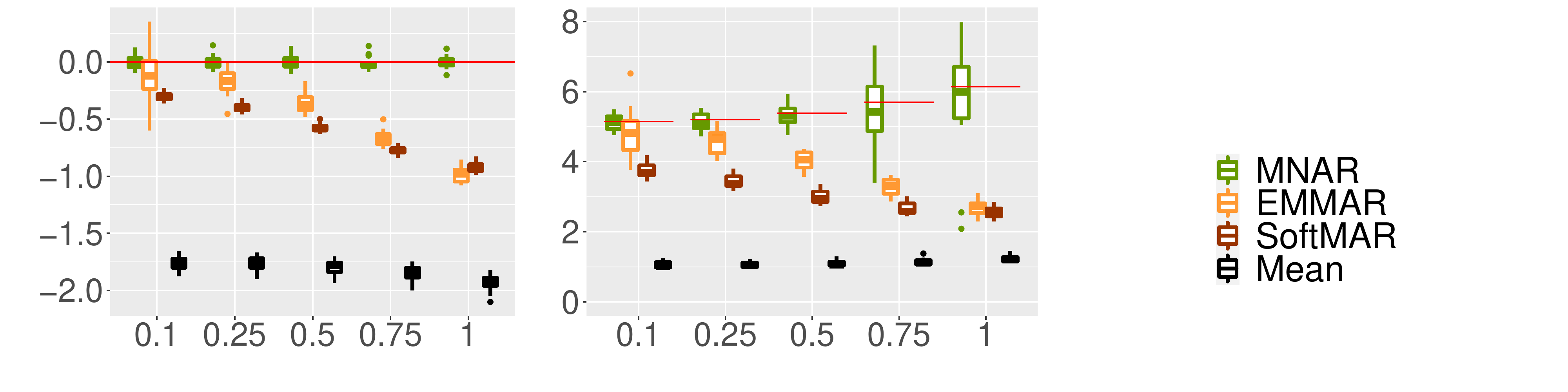}
\caption{\label{fig:MeanVarianceNoise} Mean estimation (left graphic) and variance estimation (right graphic) of one missing variable for different values of the level of noise when $r = 2$, $n = 1000$, $p = 10$ and seven variables are MNAR. True values to be estimated are indicated by red lines.}
\end{figure}

\begin{figure}[H]
\centering
\includegraphics[width=1\textwidth]{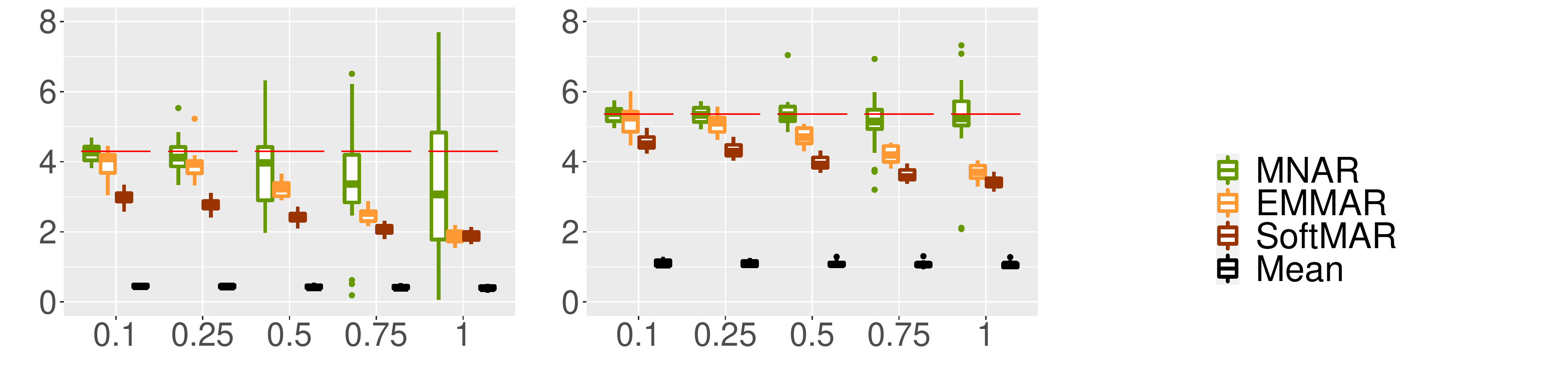}
\caption{\label{fig:CovariancesNoise} Covariance estimation beetween a missing variable and a pivot one (left graphic) and two missing variables (right graphic) for different values of the level of noise when $r = 2$, $n = 1000$, $p = 10$ and seven variables are MNAR. True values to be estimated are indicated by red lines.}
\end{figure}

\begin{figure}[H]
\centering
\includegraphics[width=1\textwidth]{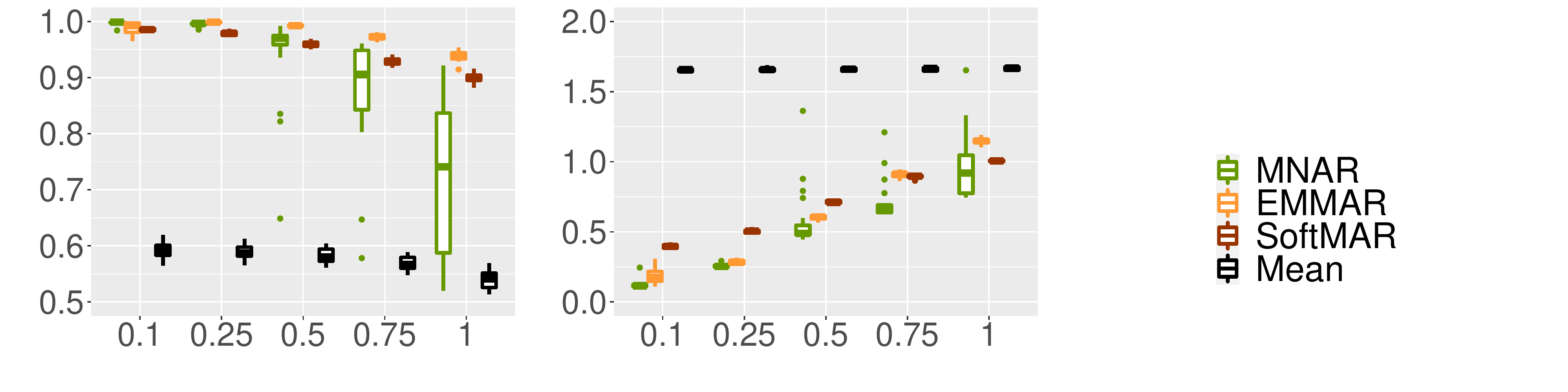}
\caption{\label{fig:MSECorrNoise} RV coefficients for the loading matrix (left graphic) and prediction error (right graphic) for different values of the level of noise when $r = 2$, $n = 1000$, $p = 10$ and seven variables are MNAR.}
\end{figure}

\paragraph{Varying the percentage of missing values}
Considering the same setting as in Section \ref{sec:simu_synthdata} ($n=1000$, $p=10$, $r=2$, $\sigma=0.1$ and seven self-masked MNAR variables), the methods are tried for different percentages of missing values (10\%, 30\%, 50\%). The results are presented in Figure \ref{fig:PercNA}. As expected, all the methods deteriorate with an increasing percentage of missing values but our method is stable. 

\begin{figure}[H]
\centering
\includegraphics[width=1\textwidth]{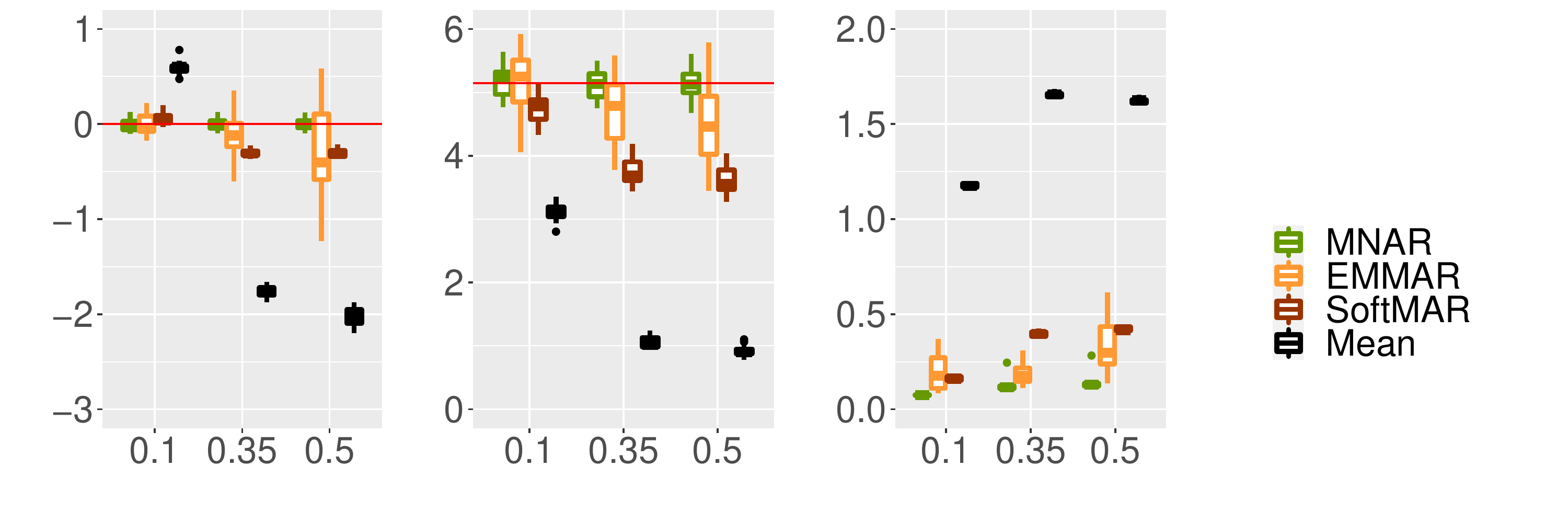}
\caption{\label{fig:PercNA} Mean estimation (left graphic), variance estimation (middle graphic) and prediction error (right graphic) for different percentages of missing values when $r = 2$, $n = 1000$, $p = 10$ and seven variables are MNAR.}
\end{figure}

\paragraph{Misspecification to the PPCA model.}
The fixed effects model is considered, \textit{i.e.} the data $Y \in \mathbb{R}^{n \times p}$ is generated as a sum of a low-rank matrix $\Theta \in \mathbb{R}^{n \times p}$ (the rank $r$ of $\Theta$ satisfies $r < \min\{n, p\}$) and a Gaussian noise matrix, \textit{i.e.}
\begin{equation}\label{eq:fixedeffect}
	Y = \Theta + \epsilon.
\end{equation}
The data matrix of size $n = 200$ and $p = 10$ is generated under the fixed effects model as \eqref{eq:fixedeffect} with a rank $r = 2$ (for $\Theta$) and a noise level $\sigma = 0.1$. Missing values are introduced on seven MNAR variables according to a self-masked MNAR mechanism, resulting in 35\% missing data in the whole matrix. Figure \ref{fig:fixedeffect} shows that estimators for the mean and the variance given by Algorithm \ref{alg:imputation} have a larger variance than those given by the parametric Method \ref{methparam}. But surprisingly, Algorithm \ref{alg:imputation} provides less biased estimates of the mean and the variance, than Method \ref{methparam}, while precisely dedicated to this specific setting. 
Note that  with Method \ref{methparam} designed for fixed effects models, the variance is slightly under-estimated, which is expected as the method imputes missing entries with $\hat \Theta$ and consequently the variability in the imputed data is smaller than the one in the observed data. 

As for the imputation performance, Figure \ref{fig:fixedeffect} also shows that Algorithm \ref{alg:imputation} gives similar results as Method \ref{methparam}, which explicitly models the MNAR mechanism. In addition, despite the model misspecification, it also remains competitive compared to  Method  \ref{methSoft}, which ignores the MNAR mechanism but is specially designed to handle fixed effect models. 

\begin{figure}[H]
\centering
\includegraphics[width=1\textwidth]{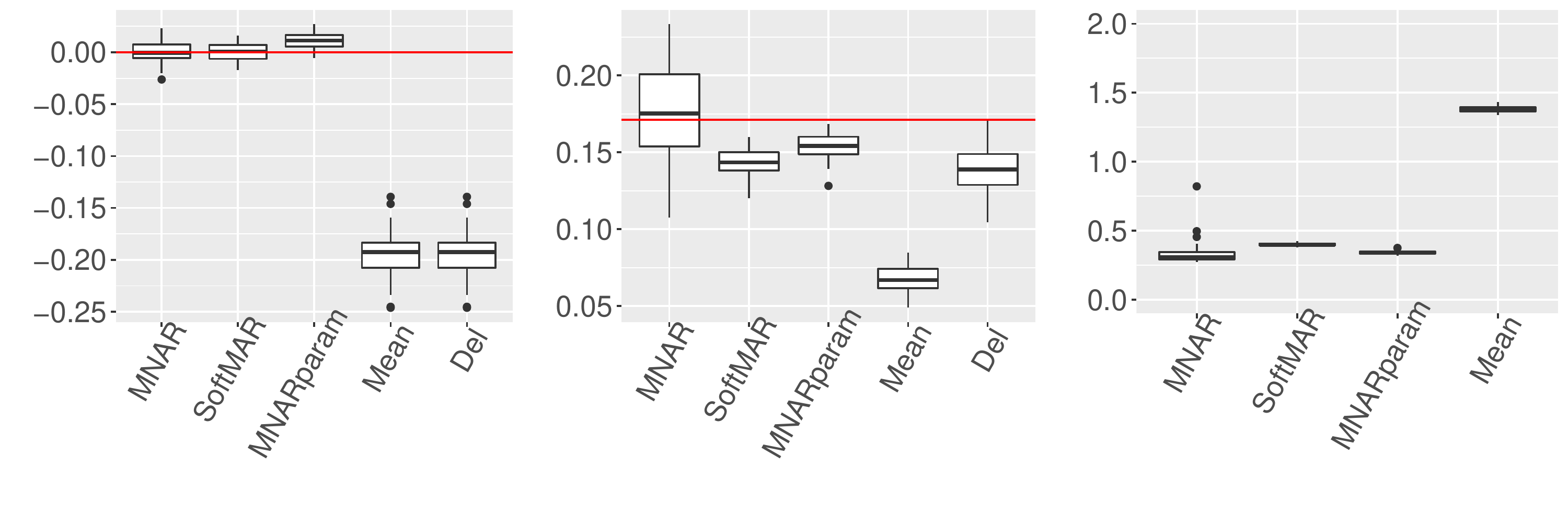}
\vspace{-1.2cm}
\caption{\label{fig:fixedeffect} Mean estimation (left graphic), variance estimation (middle graphic) of one missing variable and prediction error (right graphic) when data are generated under the fixed effects model in \eqref{eq:fixedeffect}, $r = 2$, $n = 200$, $p = 10$ and seven variables are MNAR. True values to be estimated are indicated by red lines.}
\end{figure}

\paragraph{Misspecification to the rank}
The misspecification to the parameter $r$ has been evaluated: under a model generated with $r = 3$ latent variables ($n = 1000$, $p = 20$, $\sigma=0.8$ and ten MNAR self-masked variables), the rank is either underestimated, well estimated or overestimated by giving to Algorithm \ref{alg:imputation} the information that $r=2$, $r=3$ or $r=4$. Both estimation of the loading matrix and prediction error are shown in Figure \ref{fig:missperank}. The results for an underestimated ($r=2$) or overestimated ($r=4$) rank are comparable to the case where the accurate rank is considered instead ($r=3$), showing a stability of Algorithm \ref{alg:imputation} to rank misspecification.

\begin{figure}[H]
\centering
\includegraphics[width=1\textwidth]{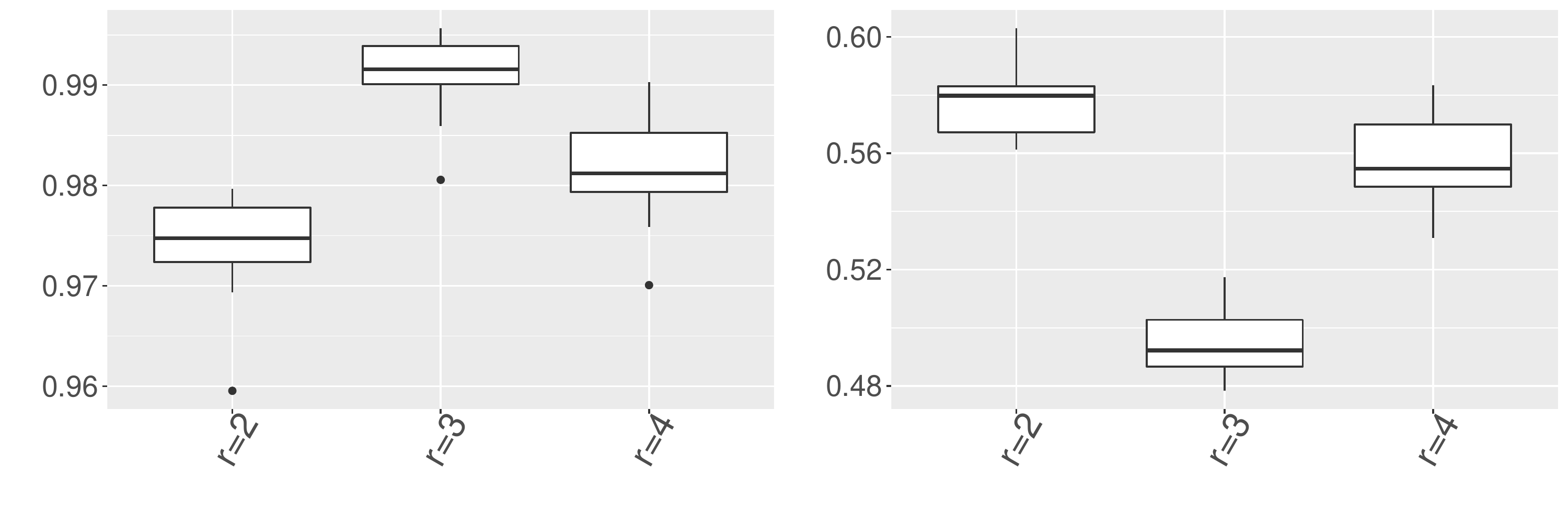}
\vspace{-0.8cm}
\caption{\label{fig:missperank} RV coefficients for the loading matrix (left graphic) and prediction error (right graphic) when $r=3$, $n=1000$, $p=20$ and ten variables are MNAR for different cases where the rank is either underestimated, well estimated or overestimated.}
\end{figure}

\paragraph{General MNAR mechanism}
We consider the setting $n=1000$, $p=20$ and $\sigma=0.8$. Here, missing values are introduced on ten variables $(Y_{.k})_{k \in [1:10]}$ using a more general MNAR mechanism (see \eqref{eq:MNARmechanism}) than the self-masked one. In particular, the MNAR mechanism we consider is defined as follows, 
\begin{equation}\label{eq:MNARsimu}
\forall m \in [1:10], \forall i \in \{1,\dots,n\}, \:  \mathbb{P}(\Omega_{im}=1|Y_{i.})=\mathbb{P}(\Omega_{im}=1|Y_{im}, Y_{ik}, Y_{i\ell}),
\end{equation}
where $k$ and $\ell$ are indexes of MNAR variables randomly chosen such that $k\neq\ell \in [1:10]\setminus\{m\}$.
In Figure \ref{fig:MNARgeneral}, Algorithm \ref{alg:imputation} provides the best estimators of the mean and the variance (in term of bias) and the smallest prediction error. 

\begin{figure}[H]
\centering
\includegraphics[width=1\textwidth]{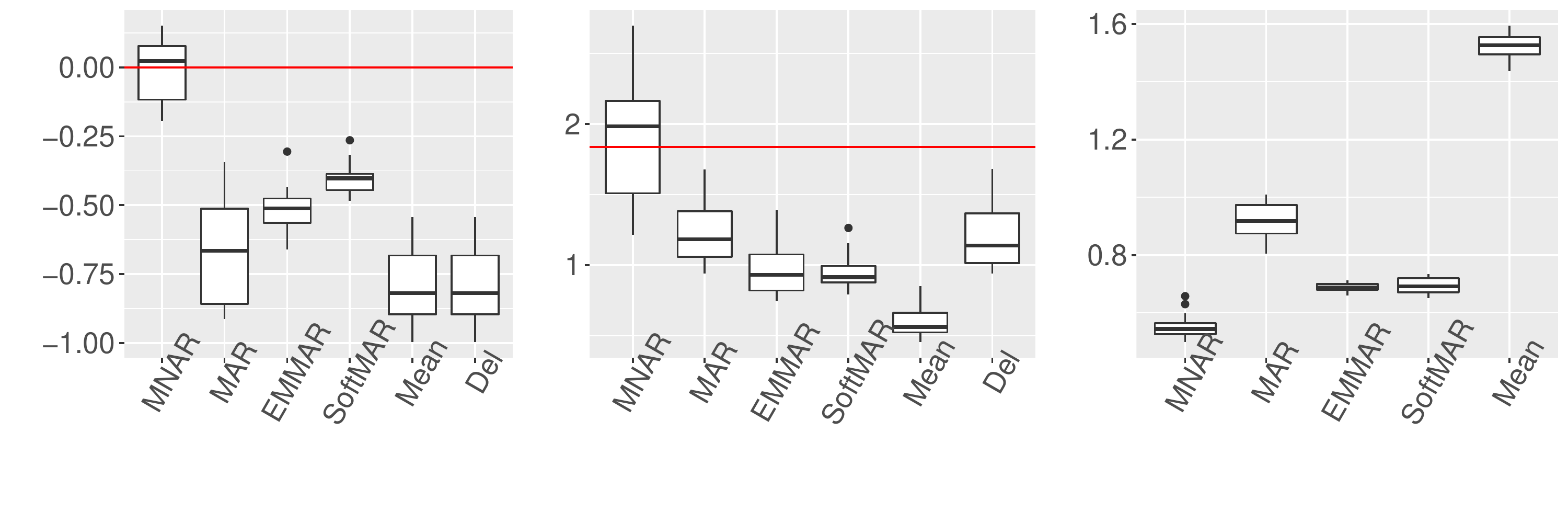}
\vspace{-0.8cm}
\caption{\label{fig:MNARgeneral} Mean estimation (left graphic), variance estimation (middle graphic) of one missing variable and prediction error (right graphic) when $r=2$, $n=1000$, $p=20$ and ten variables are MNAR as in \eqref{eq:MNARsimu}.}
\end{figure}

\paragraph{Higher dimension and variation of the rank}

The performance of the different methods for higher dimension is assessed. A data matrix of size $n = 1000$ and $p = 50$ is generated from two latent variables ($r = 2$) and with a noise level $\sigma=1$. Missing values are introduced on twenty variables according to a self-masked MNAR mechanism, leading to 20\% of missing values in total. Without loss of generality, the results are presented for one missing variable. 
Method \ref{methparam} has been discarded, as its computational time is too high for this  setting.

\begin{figure}[H]
\centering
\includegraphics[width=1\textwidth]{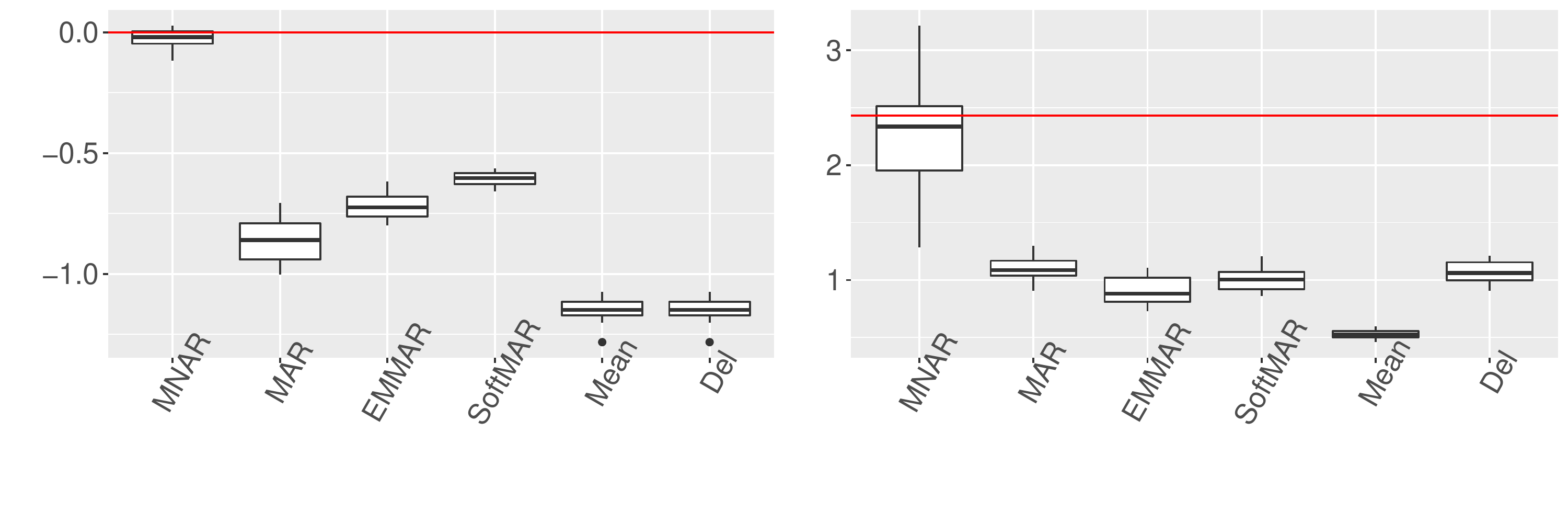}
\vspace{-1.2cm}
\caption{\label{fig:MeanVarianceHighDimr2} Mean estimation (left graphic) and variance estimation (right graphic) of one missing variable when $r = 2$, $n = 1000$, $p = 50$ and twenty variables are MNAR. True values to be estimated are indicated by red lines.}
\end{figure}

\begin{figure}[H]
\centering
\includegraphics[width=1\textwidth]{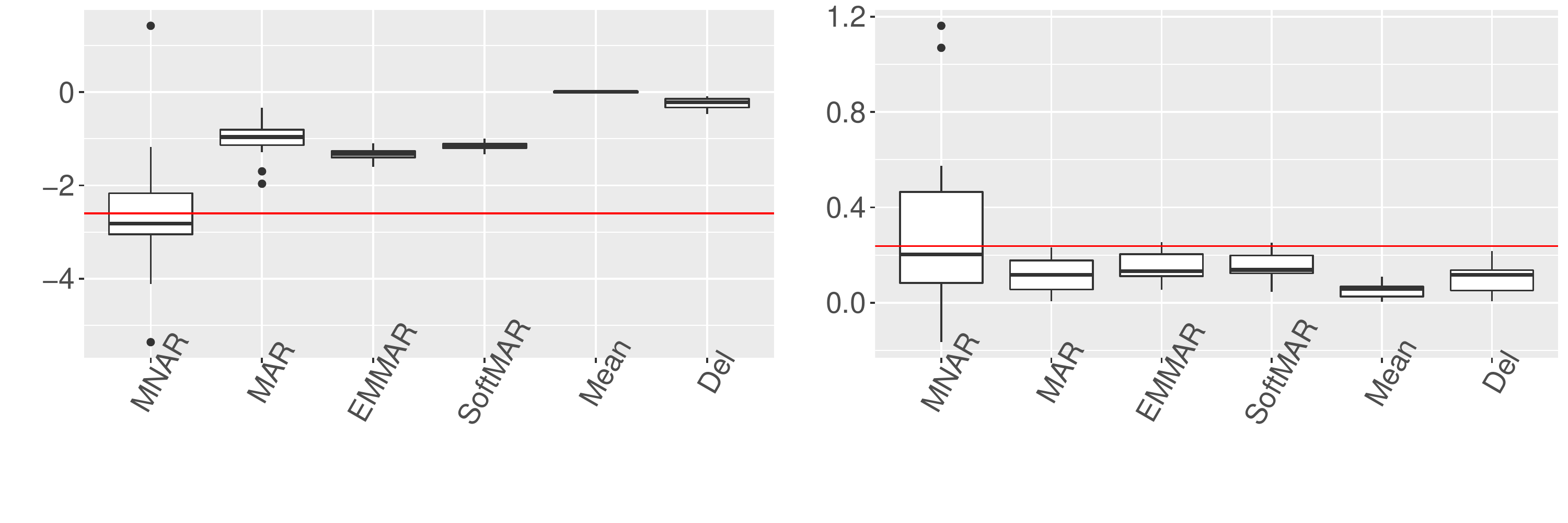}
\vspace{-1.2cm}
\caption{\label{fig:CovariancesHighDimr2} Covariance estimation beetween two missing variable (left graphic) and a missing variable and a pivot one (right graphic) when $r = 2$, $n = 200$, $p = 10$ and seven variables are MNAR. True values to be estimated are indicated by red lines.}
\end{figure}

\begin{figure}[H]
\centering
\includegraphics[width=1\textwidth]{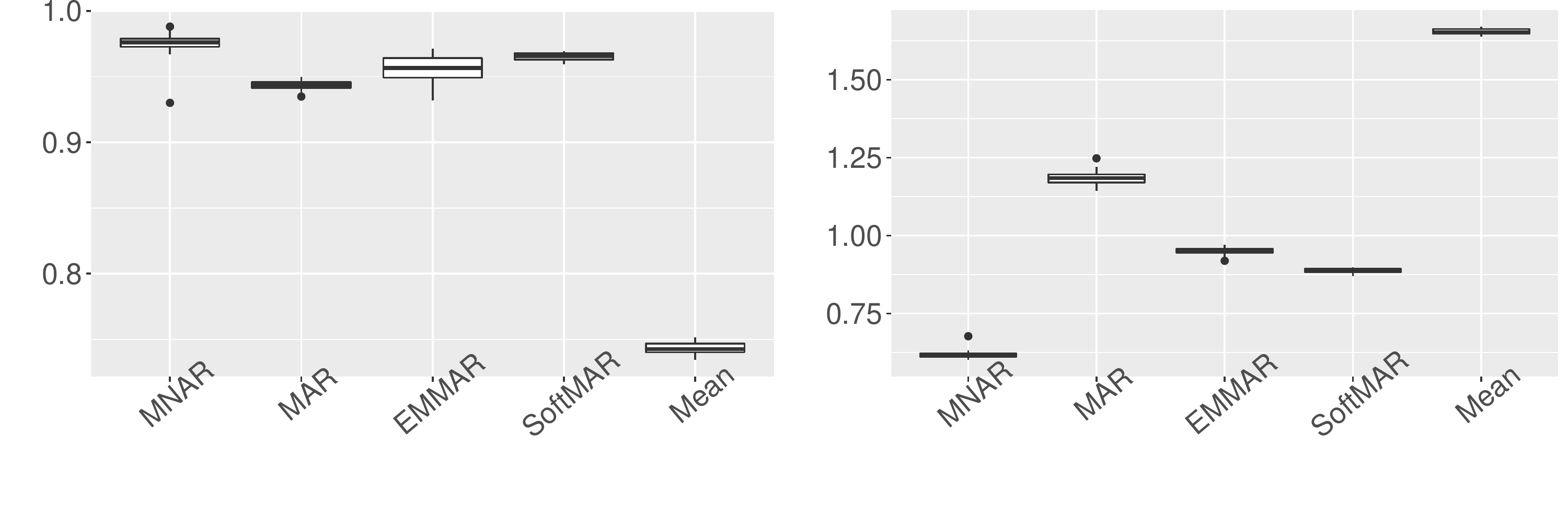}
\vspace{-1.2cm}
\caption{\label{fig:MSECorrHighDimr2} RV coefficients for the loading matrix (left graphic) and prediction error (right graphic) when $r = 2$, $n = 1000$, $p = 50$ and twenty variables are MNAR.}
\end{figure}

In Figure \ref{fig:MeanVarianceHighDimr2}, as for the estimated mean and variance, Methods  \ref{methMAR}, \ref{methEMMAR} and \ref{methSoft} suffers from a large bias, whereas Algorithm \ref{alg:imputation} gives unbiased estimators. The same comments can be done for the estimation of the covariance between two missing values in Figure \ref{fig:CovariancesHighDimr2}. As for  the covariance estimation between a missing variable and a pivot one (Figure \ref{fig:CovariancesHighDimr2}, Algorithm \ref{alg:imputation} suffers from a variability, which can be due to the fact that in this higher dimension setting, not all the possible combinations of pivot variables are considered. Indeed, instead of taking the set of pivot variables as all the not MNAR variables \textit{i.e.} $\mathcal{J}=\widebar{\mathcal{M}}$, we choose $\mathcal{J} \subset \widebar{\mathcal{M}}$ such that $|\mathcal{J}|=10$. For the mean, 270 combinations of the pivot variables are aggregated over 870 possible combinations if $\mathcal{J}=\widebar{\mathcal{M}}$.

Despite this dispersed estimator of the covariance between a MNAR variable and a pivot one, Algorithm \ref{alg:imputation} gives in Figure \ref{fig:MSECorrHighDimr2} a high RV coefficient, by improving Methods \ref{methMAR}, \ref{methSoft} and \ref{methEMMAR}. 
Concerning the imputation performance, Algorithm \ref{alg:imputation} strongly improves Methods \ref{methEMMAR} and \ref{methSoft}.

For the same dimension setting ($n=1000$, $p=50$) and the same noise level ($\sigma=1$), we vary the rank to $r=5$. Similarly as before, missing values are introduced on twenty variables according to a self-masked MNAR mechanism, leading to 20\% of missing values in total. In Figure \ref{fig:MeanVarianceHighDimr5}, for the mean and the variable estimations, Algorithm \ref{alg:imputation} gives unbiased estimators. In Figure \ref{fig:CovariancesHighDimr5}, the covariance between a missing variable and a pivot one estimated by Algorithm \ref{alg:imputation} is biased but still less than the other methods. In addition, the covariance between two missing variables is unbiased but suffers from a high variability. Note that once again we have chosen $\mathcal{J}\subset \mathcal{M}$ such that $|\mathcal{J}|=10$. For the mean, 1260 combinations of the pivot variables are aggregated over 712530 possible combinations if $\mathcal{J}=\widebar{\mathcal{M}}$. In Figure \ref{fig:MSECorrHighDimr5}, despite such results for the covariance estimators, Algorithm \ref{alg:imputation} gives a similar RV coefficient than Methods \ref{methEMMAR} and \ref{methSoft} but strongly improves all the methods in term of prediction error.

\begin{figure}[H]
\centering
\includegraphics[width=1\textwidth]{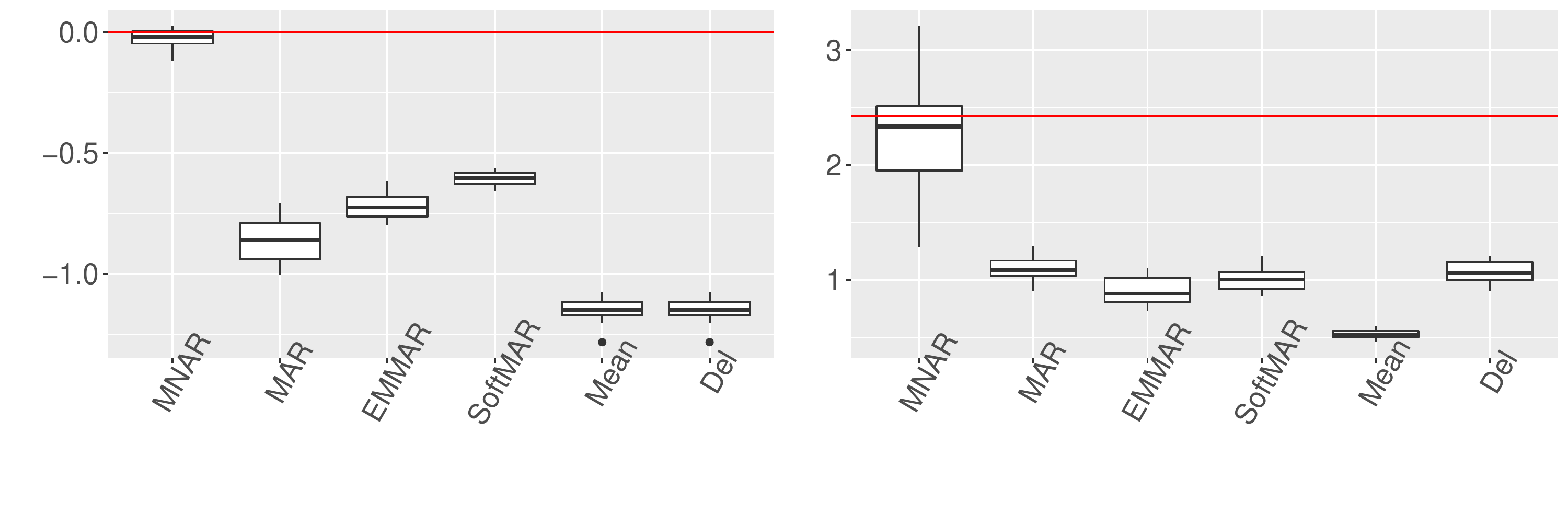}
\vspace{-1.2cm}
\caption{\label{fig:MeanVarianceHighDimr5} Mean estimation (left graphic) and variance estimation (right graphic) of one missing variable when $r = 5$, $n = 1000$, $p = 50$ and twenty variables are MNAR. True values to be estimated are indicated by red lines.}
\end{figure}

\begin{figure}[H]
\centering
\includegraphics[width=1\textwidth]{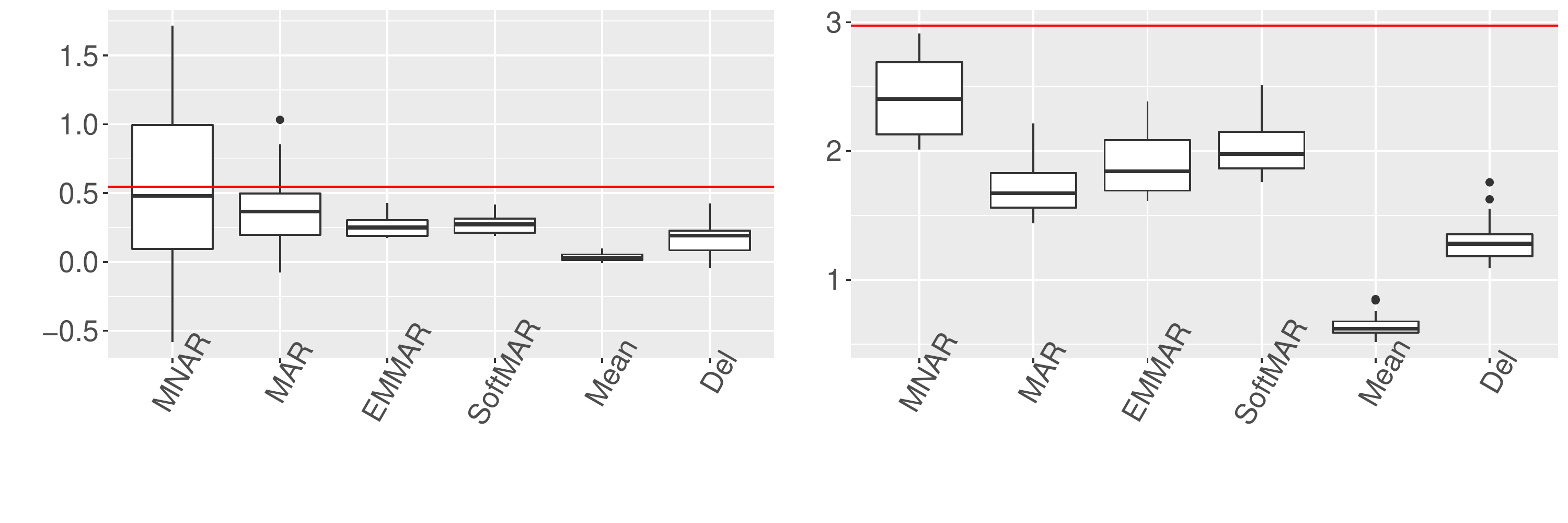}
\vspace{-1.2cm}
\caption{\label{fig:CovariancesHighDimr5} Covariance estimation beetween two missing variable (left graphic) and a missing variable and a pivot one (right graphic) when $r = 5$, $n = 1000$, $p = 50$ and twenty variables are MNAR. True values to be estimated are indicated by red lines.}
\end{figure}

\begin{figure}[H]
\centering
\includegraphics[width=1\textwidth]{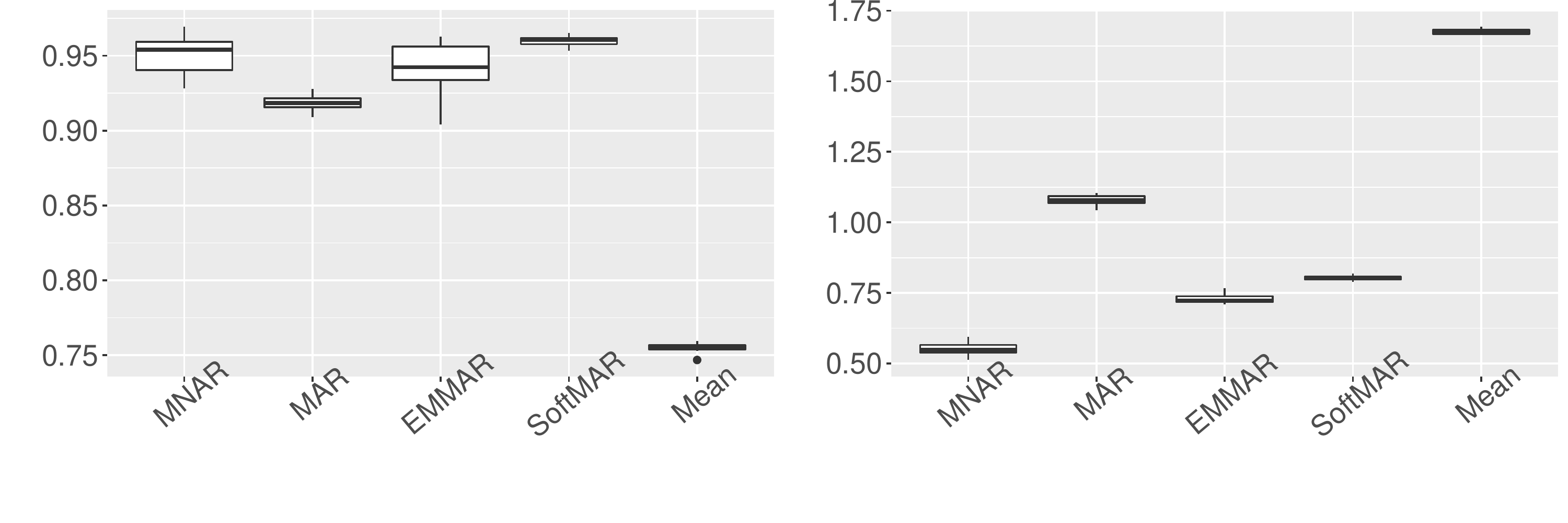}
\vspace{-1.2cm}
\caption{\label{fig:MSECorrHighDimr5} RV coefficients for the loading matrix (left graphic) and prediction error (right graphic) when $r = 5$, $n = 1000$, $p = 50$ and twenty variables are MNAR.}
\end{figure}

\section{Computation time}
\label{sec:comp_time}

Table \ref{tab:comp_time} gathers computation times of the different methods, for both settings considered in Sections \ref{sec:simu} and \ref{sec:othernumexp}. 

\begin{table}[H]
	\centering
	\begin{tabular}{|l|l|l|}
		\hline
		\begin{tabular}[c]{@{}l@{}}Method    \end{tabular} &                          
		\begin{tabular}[c]{@{}l@{}}$r=2, p=10, n=1000$\\ 35\% MNAR values \\ in 7 variables   \end{tabular} &
		\begin{tabular}[c]{@{}l@{}}$r=5, p=50, n=1000$\\ 20\% MNAR values \\  in 20 variables\end{tabular}  
		\\ \hline       
		MNAR algebraic
		&
		0,1 s  &
		11 min 48 s (1260 aggregations) \\ \hline
		SoftMAR                  & 5,5 s  &  28 s \\ \hline
		EMMAR                    &  50,8 s &  2 min 9 s  \\ \hline
		Param                    &  5 h 15 min  &  not evaluated  \\ \hline
	\end{tabular}
	\vspace{0.2cm}
	\caption{\label{tab:comp_time}Computation time for simulations in Sections \ref{sec:simu} and Appendix \ref{sec:othernumexp}. The process time is obtained for a computer with a processor Intel Core i5 of 2,3 GHz.}
\end{table}

\section{Details on the variables in TraumaBase$^{\mbox{\normalsize{\textregistered}}}$}
\label{sec:appdata}

	A description of the variables which are used in Section \ref{sec:simu_realdata} is given. The indications given in parentheses ph (pre-hospital) and h (hospital) mean that the measures have been taken before the arrival at the hospital and at the hospital.
	\begin{itemize}[label=\textbullet]
		\item \textit{SBP.ph}, \textit{DBP.ph}, \textit{HR.ph}: systolic and diastolic arterial pressure and heart rate during pre-hospital phase. (ph)
		\item \textit{HemoCue.init}: prehospital capillary hemoglobin concentration. (ph)
		\item \textit{SpO2.min}: peripheral oxygen saturation, measured by pulse oxymetry, to estimate oxygen content in the blood. (ph)
		\item \textit{Cristalloid.volume}: total amount of prehospital administered cristalloid fluid resuscitation (volume expansion). (ph)
		\item \textit{Shock.index.ph}: ratio of heart rate and systolic arterial pressure during pre-hospital phase. (ph)
		\item \textit{Delta.shock.index}: Difference of shock index between arrival at the hospital and arrival on the scene. (h)
		\item \textit{Delta.hemoCue}: Difference of hemoglobin level between arrival at the hospital and arrival on the scene. (h)
	\end{itemize}

\section{Graphical approach}
\label{sec:graph}

\subsection{Preliminaries}

Lemmas of \citet{mohan2018estimation} are used to construct some estimators of the mean, variance and covariances for a MNAR variable based on a graphical approach. 

\begin{lem}[Lemma 2 \citep{mohan2018estimation}]\label{lem:coefflinear}
	Let us consider the m-graph $G$. The coefficient of the linear regression of $Y_{.j}$ on $Y_{.k}, k \neq j$, denoted as $\beta_{j\rightarrow k, k\neq j}$ is recoverable (\textit{i.e.} they are consistent in the complete-case analysis)  if $Y_{.j} \independent \Omega | Y_{.k}, k \neq j$ and one has
	$$\beta_{j\rightarrow k, k\neq j}=\beta_{j\rightarrow k, k\neq j}^c.$$
\end{lem}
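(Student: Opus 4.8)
The plan is to reduce the statement to a fact about conditional expectations of a Gaussian vector. The quantity $\beta_{j\rightarrow k,\,k\neq j}$ is, by definition, the vector of slopes in the best affine predictor of $Y_{.j}$ from $(Y_{.k})_{k\neq j}$; since $Y$ is Gaussian under the PPCA model \eqref{eq:model}, this best affine predictor coincides with the true conditional expectation, so
$$\mathbb{E}[Y_{.j}\mid (Y_{.k})_{k\neq j}] = \beta_{j\rightarrow k,\,k\neq j}^{(0)} + \sum_{k\neq j}\beta_{j\rightarrow k,\,k\neq j}\,Y_{.k},$$
where the coefficients are read off from the Schur complement of the population covariance matrix of $Y$.

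First I would invoke the hypothesis $Y_{.j}\independent \Omega \mid (Y_{.k})_{k\neq j}$. Because the conditioning event $\{\Omega = \mathbf{1}\}$ has positive probability, this conditional independence yields the equality of conditional laws
$$\mathcal{L}\big(Y_{.j}\mid (Y_{.k})_{k\neq j}\big)=\mathcal{L}\big(Y_{.j}\mid (Y_{.k})_{k\neq j},\,\Omega=\mathbf{1}\big),$$
so that in particular $\mathbb{E}[Y_{.j}\mid (Y_{.k})_{k\neq j}]=\mathbb{E}[Y_{.j}\mid (Y_{.k})_{k\neq j},\,\Omega=\mathbf{1}]$. The right-hand side is precisely the regression function of $Y_{.j}$ on the remaining variables restricted to the complete cases; moreover, since the left-hand conditional law is Gaussian, so is the complete-case one, hence the complete-case best affine predictor also equals this common conditional expectation. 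Matching the affine coefficients on both sides term by term gives $\beta_{j\rightarrow k,\,k\neq j}=\beta_{j\rightarrow k,\,k\neq j}^{c}$ for each $k\neq j$.

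Finally, for recoverability I would note that ordinary least squares applied to the subsample $\{i : \Omega_{i.}=\mathbf{1}\}$ is, by the law of large numbers on that subsample (which is nonempty with probability tending to one), a consistent estimator of the complete-case best linear predictor coefficients $\beta_{j\rightarrow k,\,k\neq j}^{c}$; combined with the equality just established, it consistently estimates $\beta_{j\rightarrow k,\,k\neq j}$, which is the claim. The only point requiring a little care is the justification that conditioning additionally on $\Omega=\mathbf{1}$ neither destroys Gaussianity of the conditional law of $Y_{.j}$ given $(Y_{.k})_{k\neq j}$ nor alters its parameters — this is exactly what the conditional independence assumption buys, and is the crux of why the complete-case regression is unbiased here despite the overall selection bias induced by $\Omega$.
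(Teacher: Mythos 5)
Your argument is essentially correct, but note first that the paper gives no internal proof of this statement: the lemma is imported verbatim, with citation, from \citet{mohan2018estimation} (their Lemma~2), so there is no in-paper proof to compare against. Your route — invariance of $\mathcal{L}\big(Y_{.j}\mid (Y_{.k})_{k\neq j}\big)$ under conditioning on the complete cases via $Y_{.j}\independent\Omega\mid (Y_{.k})_{k\neq j}$, affineness of the conditional expectation so that the complete-case least-squares projection coincides with the full-population one, then a law of large numbers on the complete-case subsample — is exactly the mechanism behind the cited proof, with one substitution: in the m-graph/linear-SEM framework of \citet{mohan2018estimation} the affineness of $\mathbb{E}[Y_{.j}\mid (Y_{.k})_{k\neq j}]$ comes from a structural equation with exogenous error, whereas you derive it from joint Gaussianity of the PPCA model. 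In the present context this substitution is actually the honest justification, since (as the paper itself remarks when comparing the algebraic and graphical approaches) exogeneity fails for the regressions induced by the PPCA model; the price is that your proof establishes the lemma only in the Gaussian setting in which the paper uses it, not in the generality in which it is quoted — with non-Gaussian errors and a nonlinear conditional mean, the complete-case linear projection can genuinely differ from the full-population one even under the stated conditional independence, because conditioning on $\Omega=\mathbf{1}$ changes the law of the regressors. Two regularity points you gesture at should be stated explicitly: the complete-case event must have positive probability, and the complete-case covariance matrix of the regressors must be invertible, so that the affine coefficients you "match term by term" are uniquely identified and the subsample OLS limit is well defined.
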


\begin{lem}[Lemma 1] \citep{mohan2018estimation}]\label{lem:graphicalcov}(Graphical approach for computing the covariance) Let $G$ be a m-graph with $k$ unblocked paths $p_1,\dots,p_k$ between two variables $Y_{.\tau}$ and $Y_{.\delta}$. Let $A_{p_i}$ be the ancestor of all notes on path $p_i$. Let the number of notes on $p_i$ be $n_{p_i}$. One can derive that
	$$\mathrm{Cov}(Y_{.\tau},Y_{.\delta})=\sum_{i=1}^{k} \mathrm{Var}(A_{p_i}) \prod_{j=1}^{n_{p_i}-1} \alpha_j^{p_i},$$
	where $\prod_{j=1}^{n_{p_i}-1} \alpha_j^{p_i}$ is the product of all causal parameters on path $p_i$.
	
\end{lem}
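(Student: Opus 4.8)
This is the covariance form of Wright's path-tracing (trek) rule for a recursive linear Gaussian structural model, so my plan is to derive it directly from the structural equations by induction on the topology of $G$. Two preliminary reductions first. (i) In the m-graph, a path between $Y_{.\tau}$ and $Y_{.\delta}$ is unblocked with empty conditioning set exactly when it contains no collider, and a collider-free path between two vertices necessarily has the shape $Y_{.\tau}\leftarrow\cdots\leftarrow A_p\rightarrow\cdots\rightarrow Y_{.\delta}$, in which $A_p$ --- the apex --- is the unique vertex that is an ancestor of every vertex on $p$; i.e. the ``unblocked paths'' of the statement are precisely treks and $A_p$ is the trek's top. (ii) Bidirected edges are treated as elsewhere in the paper: a pattern $Y_{.i}\leftarrow W\rightarrow Y_{.j}$ may be replaced by $Y_{.i}\leftrightarrow Y_{.j}$ and then oriented by Pearl's rules, so it suffices to prove the formula when $G$ is a DAG and $Y_{.v}=\sum_{u\in\mathrm{pa}(v)}\alpha_{uv}Y_{.u}+\varepsilon_v$ with mutually independent noise terms $\varepsilon_v$ (the causal parameters $\alpha^{p}_j$ being the relevant edge coefficients $\alpha_{uv}$).

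Now I would induct on $\mathrm{rank}(\tau)+\mathrm{rank}(\delta)$ for a fixed topological ordering of $G$. \textbf{Base case:} if $\tau=\delta$, the only collider-free path from $\tau$ to itself is the trivial one-vertex path, for which $n_p=1$ and the empty product equals $1$, so the right-hand side is $\mathrm{Var}(Y_{.\tau})=\mathrm{Cov}(Y_{.\tau},Y_{.\tau})$. \textbf{Inductive step:} assume $\tau\neq\delta$; relabelling if necessary (legitimate since in a DAG at most one of them is a strict ancestor of the other, and $A_p$ is symmetric in the two endpoints) we may assume $\delta$ is not a strict ancestor of $\tau$, hence $\delta\notin\mathrm{anc}(\tau)\cup\{\tau\}$ and $\varepsilon_\delta$ is independent of $Y_{.\tau}$. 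Substituting the structural equation of $Y_{.\delta}$ then gives
\begin{equation*}
\mathrm{Cov}(Y_{.\tau},Y_{.\delta})=\sum_{u\in\mathrm{pa}(\delta)}\alpha_{u\delta}\,\mathrm{Cov}(Y_{.\tau},Y_{.u}),
\end{equation*}
and since $\mathrm{rank}(u)<\mathrm{rank}(\delta)$ each term on the right is covered by the induction hypothesis.

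The remaining --- and only genuinely fiddly --- step is a combinatorial bijection: unblocked paths $p$ from $Y_{.\tau}$ to $Y_{.\delta}$ correspond one-to-one to pairs $(u,q)$ with $u\in\mathrm{pa}(\delta)$ and $q$ an unblocked path from $Y_{.\tau}$ to $Y_{.u}$, via taking $p$ to be $q$ followed by the edge $u\rightarrow\delta$. Under this correspondence $A_p=A_q$, so $\mathrm{Var}(A_p)=\mathrm{Var}(A_q)$, and $\prod_j\alpha^p_j=\alpha_{u\delta}\prod_j\alpha^q_j$; feeding this into the displayed recursion and comparing with the induction hypothesis closes the induction. I would establish the bijection by checking: (a) appending $u\rightarrow\delta$ to $q$ cannot create a collider at $u$ --- if $u$ is already the apex of $q$ the new edge makes it a fork, otherwise the $u$-incident edge of $q$ points into $u$ and the new edge makes $u$ a chain vertex --- and $\delta$ is a fresh vertex, since $\delta\in q$ would force either a directed cycle through $u$ or $\delta\in\mathrm{anc}(\tau)$, both excluded; (b) $A_q$ stays an ancestor of every vertex of $p$ (it was an ancestor of $u$, which is now an ancestor of $\delta$), so the apex is preserved; (c) conversely, for any unblocked $p$ from $\tau$ to $\delta$ the edge of $p$ incident to $\delta$ must point into $\delta$ --- if it pointed out, $\delta$ would be the apex of $p$ and $p$ a directed path $\delta\rightsquigarrow\tau$, contradicting that $\delta$ is not a strict ancestor of $\tau$ --- so $p$ decomposes uniquely as such a pair $(u,q)$. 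The main obstacle is exactly this apex-preserving bookkeeping; conceptually, the reason the formula can carry the marginal variances $\mathrm{Var}(A_{p_i})$ of arbitrary common ancestors rather than only root-noise variances is that the induction ``stops early'' at the first branch point, treks lying strictly below it being absorbed into the recursive expansion of $\mathrm{Var}(A_{p_i})$ itself.
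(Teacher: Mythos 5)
Your derivation is essentially sound, but note the point of comparison: the paper does not prove this statement at all — it is imported verbatim as Lemma~1 of \citet{mohan2018estimation} (a restatement of Wright's path-tracing/trek rule for linear Gaussian structural models), and is only used as a black box in Appendix~\ref{sec:graph}. So there is no in-paper argument to match; what you have written is a self-contained proof of the cited result, and it is the standard one: reduce to a recursive linear SEM with independent errors, expand the structural equation of the endpoint of larger topological rank to get $\mathrm{Cov}(Y_{.\tau},Y_{.\delta})=\sum_{u\in\mathrm{pa}(\delta)}\alpha_{u\delta}\mathrm{Cov}(Y_{.\tau},Y_{.u})$ (using $\mathrm{Cov}(Y_{.\tau},\varepsilon_\delta)=0$ since $\delta\notin\mathrm{anc}(\tau)\cup\{\tau\}$), and close the induction via the apex-preserving bijection between collider-free paths into $\delta$ and pairs $(u,q)$; your checks (a)--(c) of that bijection, including why the $\delta$-incident edge must point into $\delta$ and why $\delta$ cannot already lie on $q$, are the genuinely necessary steps and they are correct, as is your closing remark explaining why the formula may carry $\mathrm{Var}(A_{p_i})$ rather than root-noise variances. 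Two minor caveats: the lemma is stated for m-graphs, which are DAGs, so your preliminary reduction~(ii) about replacing $Y_{.i}\leftarrow W\rightarrow Y_{.j}$ by a bidirected edge is not needed for the statement itself (it is only relevant to how the paper builds the graphs of Figure~\ref{fig:proofToy} before invoking the lemma, and with genuinely correlated errors the formula would need an extra term, so it is better phrased as ``the graphs to which the lemma is applied are DAGs''); and your base case uses the convention that the one-vertex path counts as an unblocked path with empty product, which is a harmless internal device but should be flagged as such since the statement concerns two distinct variables. Also make explicit that the ``causal parameters'' are the coefficients of the assumed linear structural equations with exogenous, mutually independent errors — the identity is false without linearity.
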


In addition, let us recall the basic formula,
\begin{equation}\label{eq:basicformula}
	\beta_{Y\rightarrow X}=\frac{\textrm{Cov}(X,Y)}{\textrm{Var}(X)},
\end{equation}
where $Y$ and $X$ are two variables of a linear model. 

\subsection{Estimation of the mean, variance and covariances of the MNAR variables}
\label{sec:estimationgraphical}

The graphical approach to construct an estimator of $\alpha_1$  is based on the transformation illustrated in Figure \ref{fig:proofToy} of the graphical model of PPCA as structural causal graphs, whose context is introduced in \citep{pearl2003causality}. This latter framework allows to directly apply  the results of \citet{mohan2018estimation} who consider the associated (linear) structural causal equations under the exogeneity assumption with MNAR missing values for one variable. 

For the sake of brevity, the results are presented for the toy example in Section \ref{sec:toyexample} where $p=3$, $r=2$ and $Y_{.1}$ is self-masked MNAR and the other variables are observed.

Then, one can associate to Figure \ref{fig:proofToy} (bottom right graph) the structural equation model detailled in the following lemma. 
\begin{lem}\label{lem:PCAlineargraph_toy}
	Assuming $\mathbb{E}[\epsilon_{.2}|Y_{.1},Y_{.3}]=0$, the structural equation model associated with the bottom right graph in Figure \ref{fig:proofToy} is
	\begin{equation}\label{eq:graphtoy}
	Y_{.2}=\beta_{2 \rightarrow 1,3 [0]}+\beta_{2 \rightarrow 1,3 [1]}Y_{.1}+\beta_{2 \rightarrow 1,3 [3]}Y_{.3}+\epsilon_{.2},
	\end{equation}
	where $\beta_{2 \rightarrow 1,3 [0]}$, $\beta_{2 \rightarrow 1,3 [1]}$ and $\beta_{2 \rightarrow 1,3 [3]}$ are the intercept and the coefficients of the linear regression of $Y_{.2}$ on $Y_{.1}$ and $Y_{.3}$.
\end{lem}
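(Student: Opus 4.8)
The plan is to read off the linear structural equation that the bottom right graph of Figure~\ref{fig:proofToy} assigns to $Y_{.2}$, and then to use the stated exogeneity hypothesis together with the non-degeneracy built into the PPCA model~\eqref{eq:model} to identify its coefficients with the population linear-regression coefficients of $Y_{.2}$ on $(Y_{.1},Y_{.3})$.

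First I would recall how a directed graph is turned into a linear structural equation model in the framework of \citep{pearl2003causality}: each variable is written as an affine function of the variables pointing into it, plus a disturbance term. Starting from the bidirected (top right) graph and applying the interchange rule \citep[rule 1, page 147]{pearl2003causality}, one reaches the bottom right graph, which by construction of the reduced models is the one carrying the structural equation for $Y_{.2}$ in terms of $Y_{.1}$ and $Y_{.3}$, while $\Omega_{.1}$ is driven by $Y_{.1}$ alone. This canonical construction produces precisely an equation of the shape $Y_{.2}=\beta_{2\rightarrow1,3[0]}+\beta_{2\rightarrow1,3[1]}Y_{.1}+\beta_{2\rightarrow1,3[3]}Y_{.3}+\epsilon_{.2}$, i.e.\ \eqref{eq:graphtoy}; so the only substantive thing left to prove is that, once $\mathbb{E}[\epsilon_{.2}\mid Y_{.1},Y_{.3}]=0$ is imposed, the three coefficients are the regression coefficients of $Y_{.2}$ on $(Y_{.1},Y_{.3})$.

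For that I would take \eqref{eq:graphtoy}, multiply it in turn by $1$, $Y_{.1}$ and $Y_{.3}$, and take expectations; by the tower property the hypothesis $\mathbb{E}[\epsilon_{.2}\mid Y_{.1},Y_{.3}]=0$ kills each of $\mathbb{E}[\epsilon_{.2}]$, $\mathbb{E}[Y_{.1}\epsilon_{.2}]$ and $\mathbb{E}[Y_{.3}\epsilon_{.2}]$, so $(\beta_{2\rightarrow1,3[0]},\beta_{2\rightarrow1,3[1]},\beta_{2\rightarrow1,3[3]})$ solves exactly the population least-squares normal equations for regressing $Y_{.2}$ on $(1,Y_{.1},Y_{.3})$. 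Under \eqref{eq:model} the marginal covariance $\Sigma=B^{T}B+\sigma^{2}\mathrm{Id}_{p\times p}$ is positive definite, hence the Gram matrix of $(1,Y_{.1},Y_{.3})$ is invertible and these normal equations have a unique solution, which is by definition the linear-regression coefficient vector; this gives the claim. (Equivalently, one can argue through joint Gaussianity of $(Y_{.1},Y_{.2},Y_{.3})$, a consequence of \eqref{eq:model}: then $\mathbb{E}[Y_{.2}\mid Y_{.1},Y_{.3}]$ is affine, and taking $\epsilon_{.2}:=Y_{.2}-\mathbb{E}[Y_{.2}\mid Y_{.1},Y_{.3}]$ makes the hypothesis hold automatically while the affine part is the regression.)

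The part that needs the most care is conceptual rather than computational: being precise that this exogenous-noise equation is attached to the bottom right graph and not to the bottom left one, which merely re-encodes the purely algebraic identity \eqref{eq:linear_toy} whose residual $\zeta$ is correlated with $(Y_{.1},Y_{.3})$. It is worth stating explicitly that \eqref{eq:graphtoy} and \eqref{eq:linear_toy} are two genuinely different decompositions of the same $Y_{.2}$ sharing the same functional form: their coefficients differ in general and coincide only in the limit $\sigma^{2}\to0$, in which the conditional bias $\mathbb{E}[\epsilon_{.k}\mid Y_{.1},Y_{.3}]$ vanishes. This distinction is exactly what makes \eqref{eq:graphtoy} the appropriate input for Lemmas~\ref{lem:coefflinear} and~\ref{lem:graphicalcov}.
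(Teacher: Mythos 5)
Your proposal is correct, and it in fact supplies more than the paper does: Lemma~\ref{lem:PCAlineargraph_toy} is stated without any proof, being treated as the definitional step of the graphical approach (a linear structural equation is attached to the reduced graph, with the exogeneity condition $\mathbb{E}[\epsilon_{.2}|Y_{.1},Y_{.3}]=0$ imposed as an approximation, as the paper itself acknowledges in the remark following Proposition~\ref{prop:meantoygraph}). Your normal-equations argument — multiply \eqref{eq:graphtoy} by $1$, $Y_{.1}$, $Y_{.3}$, take expectations, use the tower property to kill the disturbance terms, and invoke positive definiteness of $\Sigma=B^TB+\sigma^2\mathrm{Id}_{p\times p}$ to get uniqueness — is a valid and standard way to justify that, under the assumed exogeneity, the structural coefficients coincide with the population regression coefficients of $Y_{.2}$ on $(Y_{.1},Y_{.3})$; the Gaussian-conditional-expectation variant you sketch works equally well. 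Your closing observation that \eqref{eq:graphtoy} and \eqref{eq:linear_toy} share a functional form but differ in their disturbances (the residual $\zeta$ of the algebraic identity being correlated with the regressors, with agreement of the coefficients only as $\sigma^2\to 0$) matches the paper's own remark contrasting the algebraic and graphical approaches. One small caveat: do not lean on the ``bottom right versus bottom left'' attribution as the substantive distinction — the paper's own labelling of the two reduced graphs is inconsistent between the main text (which associates \eqref{eq:linear_toy} with the bottom left graph) and the appendix — the genuine distinction is, as you say elsewhere, whether the disturbance is assumed exogenous, not which panel the equation is drawn on.
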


Using Equation \eqref{eq:graphtoy} and Lemma \ref{lem:coefflinear}, we apply the results of  \citet{mohan2018estimation} to get an estimator for the mean of the MNAR variable. 
\begin{prop}[Mean estimator for the graphical approach]\label{prop:meantoygraph}
	Under the equation  \eqref{eq:graphtoy}, assuming \ref{hyp1} and $\beta_{2\rightarrow 1.3}^{c} \neq 0$, one can construct an estimator of the mean $\alpha_1$ of the MNAR variable $Y_{.1}$ as follows
	\begin{equation}\label{eq:expectationtoyexgraph}
	\hat{\alpha}_1:=\frac{\hat{\alpha}_2-\hat{\beta}_{2 \rightarrow 1,3 [0]}^{c}-\hat{\beta}_{2 \rightarrow 1,3 [3]}^{c}\hat{\alpha}_3}{\hat{\beta}_{2 \rightarrow 1,3 [1]}^{c}},
	\end{equation}
	where $\hat{\beta}_{2 \rightarrow 1,3 [0]}^c$, $\hat{\beta}_{2 \rightarrow 1,3 [1]}^c$ and $\hat{\beta}_{2 \rightarrow 1,3 [3]}^c$ denote some estimators of $\beta_{2 \rightarrow 1,3 [0]}^c$, $\beta_{2 \rightarrow 1,3 [1]}^c$ and $\beta_{2 \rightarrow 1,3 [3]}^c$ given in Lemma \ref{lem:PCAlineargraph_toy}. 
	This estimator is consistent under additional Assumption \ref{hyp4}.
\end{prop}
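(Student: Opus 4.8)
The plan is to treat this statement as the graphical-approach counterpart of Proposition~\ref{prop:mean_formula_general} specialised to the toy example: I would establish a population identity for $\alpha_1$ written in terms of complete-case regression coefficients, and then substitute consistent estimators. The starting point is the structural equation~\eqref{eq:graphtoy} of Lemma~\ref{lem:PCAlineargraph_toy} together with its exogeneity assumption $\mathbb{E}[\epsilon_{.2}\mid Y_{.1},Y_{.3}]=0$.

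First I would invoke Lemma~\ref{lem:coefflinear} (Lemma~2 of~\citep{mohan2018estimation}) to show the regression coefficients of $Y_{.2}$ on $(Y_{.1},Y_{.3})$ are recoverable. Assumption~\ref{hyp2} reads here $Y_{.2}\independent\Omega_{.1}\mid Y_{.1},Y_{.3}$, and since $Y_{.2}$ and $Y_{.3}$ are fully observed this is exactly the hypothesis of that lemma; it yields $\beta_{2\rightarrow 1,3[k]}=\beta^{c}_{2\rightarrow 1,3[k]}$ for $k\in\{0,1,3\}$, and that the complete-case OLS estimators $\hat\beta^{c}_{2\rightarrow 1,3[k]}$ are consistent for these coefficients. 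Then I would take unconditional expectations in~\eqref{eq:graphtoy}: the tower rule and the exogeneity assumption give $\mathbb{E}[\epsilon_{.2}]=0$, so
\begin{equation*}
\alpha_2=\beta^{c}_{2\rightarrow 1,3[0]}+\beta^{c}_{2\rightarrow 1,3[1]}\,\alpha_1+\beta^{c}_{2\rightarrow 1,3[3]}\,\alpha_3 .
\end{equation*}
Because $\beta^{c}_{2\rightarrow 1,3[1]}\neq 0$ by hypothesis, I can solve for $\alpha_1$, which is precisely formula~\eqref{eq:expectationtoyexgraph} at the population level; defining $\hat\alpha_1$ by replacing $\alpha_2,\alpha_3$ and the $\beta^{c}$'s with $\hat\alpha_2,\hat\alpha_3$ and the $\hat\beta^{c}$'s gives the stated estimator.

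Consistency then follows from Slutsky's lemma and the continuous mapping theorem: $\hat\alpha_2$ and $\hat\alpha_3$ are consistent by~\ref{hyp4}, the $\hat\beta^{c}$'s are consistent by the recoverability step, and the limiting denominator $\beta^{c}_{2\rightarrow 1,3[1]}$ is nonzero, so $\hat\alpha_1\to\alpha_1$ in probability. I expect the one genuinely delicate point to be the equality $\beta_{2\rightarrow 1,3[k]}=\beta^{c}_{2\rightarrow 1,3[k]}$, i.e.\ that further conditioning on $\Omega_{.1}=1$ does not change the regression of $Y_{.2}$ on $(Y_{.1},Y_{.3})$; this rests on $\mathbb{E}[Y_{.2}\mid Y_{.1},Y_{.3},\Omega_{.1}=1]=\mathbb{E}[Y_{.2}\mid Y_{.1},Y_{.3}]$, immediate from~\ref{hyp2}, plus the fact that under the Gaussian PPCA model this conditional mean is affine in $(Y_{.1},Y_{.3})$ with the same intercept and slopes in the full and the complete-case populations. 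The remaining manipulations are routine.
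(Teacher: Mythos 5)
Your proposal is correct and follows essentially the same route as the paper: both arguments combine the exogeneity built into the structural equation \eqref{eq:graphtoy}, the conditional independence $Y_{.2}\independent\Omega_{.1}\mid Y_{.1},Y_{.3}$ to pass between the full and complete-case regressions (via Lemma \ref{lem:coefflinear}), and the tower rule to obtain the population identity $\alpha_2=\beta^{c}_{2\rightarrow 1,3[0]}+\beta^{c}_{2\rightarrow 1,3[1]}\alpha_1+\beta^{c}_{2\rightarrow 1,3[3]}\alpha_3$, after which solving for $\alpha_1$ and plugging in consistent estimators (A4 plus the lemma) gives consistency. The only cosmetic difference is that you identify $\beta=\beta^c$ first and then take expectations, whereas the paper conditions on $\Omega_{.1}=1$ inside the conditional expectation directly; you also correctly attribute the conditioning step to Assumption A2, which the paper's proof mislabels as A1.
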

\begin{proof}
To derive some estimator of the mean, we want to obtain the following formula
    \begin{equation}\label{eq:expectationtoyex2}
	\alpha_1=\frac{\alpha_2-\beta_{2\rightarrow 1,3[0]}^{c}-\beta_{2 \rightarrow 1,3[3]}^{c}\alpha_3}{\beta_{2\rightarrow 1,3[1]}^{c}}.
	\end{equation}
	Indeed, one has:
	\begin{align*}
	\mathbb{E}[Y_{.2}]&=\mathbb{E}[\mathbb{E}[Y_{.2}|Y_{.1},Y_{.3}] \\
	&=\mathbb{E}[\mathbb{E}[Y_{.2}|Y_{.1},Y_{.3},\Omega_{.1}=1]] &\textrm{(by using \ref{hyp1})} \\
	&=\mathbb{E}[\mathbb{E}[\beta_{2 \rightarrow 1,3 [0]}^c+\beta_{2 \rightarrow 1,3[1]}^cY_{.1}+\beta_{2 \rightarrow 3,1[3]}^cY_{.3}+ \epsilon_{.2}|Y_{.1},Y_{.3}]]  \\
	&=\beta_{2 \rightarrow 1,3 [0]}^c+\beta_{2 \rightarrow 1,3[1]}^c \mathbb{E}[Y_{.1}] + \beta_{2 \rightarrow 3,1[3]}^c \mathbb{E}[Y_{.3}],
	\end{align*}
	which leads to the desired Equation \eqref{eq:expectationtoyex2}, provided that $\beta_{2\rightarrow 1,3[1]}^{c} \neq 0$.
A natural estimator fo $\alpha_1$ is then given by \eqref{eq:expectationtoyexgraph}. It is consistent given that all the quantities involved are consistent, by using \ref{hyp4} (for the consistency of $\hat{\alpha}_2$ and $\hat{\alpha}_3$) and Lemma \ref{lem:coefflinear} (for the consistency of the coefficients $\hat{\beta}_{2 \rightarrow 1,3 [0]}^c$, $\hat{\beta}_{2 \rightarrow 1,3 [1]}^c$ and $\hat{\beta}_{2 \rightarrow 1,3 [3]}^c$).
\end{proof}

\begin{remark}[Mean estimation: algebraic vs. graphical approach]
In both approaches, the PPCA model is translated into a linear model. 
However, both estimators in Equations \eqref{eq:expectation_main}  and \eqref{eq:expectationtoyexgraph} theoretically differ. The exogeneity assumption and approximation is not made at the same step.
In the algebraic approach, the results are first derived without using any approximation. It gives linear models that do not comply with the standard exogeneity assumption. Consequently, an approximation is done at the estimation step since the parameters $\hat{\mathcal{B}}^c_{2 \rightarrow 1,3 [0]}$, $\hat{\mathcal{B}}^c_{2 \rightarrow 1,3 [1]}$ and  $\hat{\mathcal{B}}^c_{2 \rightarrow 1,3 [3]}$ are estimated with the standard linear regression coefficients. 
In the graphical approach, an approximation is made at the first step when a structural equation model is associated with the graphical model by assuming the exogeneity, \textit{i.e.} $\mathbb{E}[\epsilon_{.2}|Y_{.1},Y_{.3}]=0$. In practice, for both approaches, the same coefficients are naturally computed, \textit{i.e.} $\hat{\beta}^c_{j \rightarrow k,\ell}=\hat{\mathcal{B}}^c_{j \rightarrow k,\ell}$, which leads to the same computed estimators for the mean of $Y_{.1}$. 
\end{remark}

Whereas only one simplified graphical model between $Y_{.1}$, $Y_{.2}$ and $Y_{.3}$, displayed in the bottom right graph of Figure \ref{fig:proofToy}, was required to construct an estimator of the mean of $Y_{.1}$, the variance and covariance estimation relies on Equation \eqref{eq:graphtoy} and the following one (associating to the bottom left graph of Figure \ref{fig:proofToy}), 
\begin{equation}\label{eq:graphtoy2}
Y_{.3}=\beta_{3 \rightarrow 1,2 [0]}+\beta_{3 \rightarrow 1,2 [1]}Y_{.1}+\beta_{3 \rightarrow 1,2 [2]}Y_{.2}+\epsilon_{.3},
\end{equation}
assuming $\mathbb{E}[\epsilon_{.3}|Y_{.1},Y_{.2}]=0$ and where $\beta_{3 \rightarrow 1,2 [0]}$, $\beta_{3 \rightarrow 1,2 [1]}$ and $\beta_{3 \rightarrow 1,2 [2]}$ are the intercept and the coefficients of the linear regression of $Y_{.3}$ on $Y_{.1}$ and $Y_{.2}$.

Using Equations \eqref{eq:graphtoy} and \eqref{eq:graphtoy2} and Lemmas \ref{lem:coefflinear}, \ref{lem:graphicalcov}, one can derive some estimators for the variance and the covariances of $Y_1$.

\begin{prop}[Variance and covariances formulae resulting from the graphical approach when $p=3$ and $r=2$]\label{prop:vartoygraph}
	Under the two equations  \eqref{eq:graphtoy} and \eqref{eq:graphtoy2}, assuming \ref{hyp1} and also $\beta^c_{3 \rightarrow 1} \neq 0$, $\beta_{2\rightarrow 1,3[1]}^c \neq 0$ and $\mathrm{Var}(Y_{.3}) \neq 0$, one can construct an estimator of the variance of the MNAR variable $Y_{.1}$ and its covariances as follows
	\begin{align}
	\label{eq:vartoygraph_est}
	\widehat{\mathrm{Var}}(Y_{.1})&:=\frac{\widehat{\mathrm{Var}}(Y_{.3})}{\hat{\beta}_{3\rightarrow1}^c}\frac{1}{\hat{\beta}_{2 \rightarrow 1,3 [1]}^c}\left(\frac{\widehat{\mathrm{Cov}}(Y_{.2},Y_{.3})}{\widehat{\mathrm{Var}}(Y_{.3})}-\hat{\beta}_{2 \rightarrow 1,3 [3]}^c\right), \\
	\label{eq:cov1toy_est}
	\widehat{\mathrm{Cov}}(Y_{.1},Y_{.2})&:=\frac{1}{\hat{\beta}_{3 \rightarrow 1,2 [1]}^c}\left(\frac{\widehat{\mathrm{Cov}}(Y_{.2},Y_{.3})}{\widehat{\mathrm{Var}}(Y_{.2})}-\hat{\beta}_{3 \rightarrow 1,2 [2]}^c\right)\widehat{\mathrm{Var}}(Y_{.2}), \\
	\label{eq:cov2toy_est}
	\widehat{\mathrm{Cov}}(Y_{.1},Y_{.3})&:=\frac{1}{\hat{\beta}_{2 \rightarrow 1,3 [1]}^c}\left(\frac{\widehat{\mathrm{Cov}}(Y_{.2},Y_{.3})}{\widehat{\mathrm{Var}}(Y_{.3})}-\hat{\beta}_{2 \rightarrow 1,3 [3]}^c\right)\widehat{\mathrm{Var}}(Y_{.3}), 
\end{align}
    where $\hat{\beta}_{3 \rightarrow 1,2 [1]}^c$, $\hat{\beta}_{3 \rightarrow 1,2 [2]}^c$ and $\hat{\beta}_{3 \rightarrow 1}^c$ are some estimators of $\beta_{3 \rightarrow 1,2 [1]}^c$, $\beta_{3 \rightarrow 1,2 [2]}^c$ and $\beta_{3 \rightarrow 1}^c$ given in \eqref{eq:graphtoy2}. 

    These estimators are consistent under additional Assumption \ref{hyp4}.
\end{prop}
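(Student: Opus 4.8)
The plan is to mimic the structure of the algebraic proof of Proposition \ref{prop:var_formula_general}, but now routing every step through the two structural equations \eqref{eq:graphtoy} and \eqref{eq:graphtoy2} and the recoverability lemma (Lemma \ref{lem:coefflinear}). Throughout, the working assumption is the exogeneity built into \eqref{eq:graphtoy} and \eqref{eq:graphtoy2}, i.e.\ $\mathbb{E}[\epsilon_{.2}\mid Y_{.1},Y_{.3}]=0$ and $\mathbb{E}[\epsilon_{.3}\mid Y_{.1},Y_{.2}]=0$, which is exactly what makes the graphical reduction legitimate and what replaces the $o(\sigma^2)$ bookkeeping of the algebraic route. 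I would first treat the two covariances, since they are the lighter computations, and then the variance.

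\textbf{Covariances.} Starting from \eqref{eq:graphtoy}, take $\mathrm{Cov}(\cdot,Y_{.3})$ on both sides: by exogeneity $\mathrm{Cov}(\epsilon_{.2},Y_{.3})=0$, so $\mathrm{Cov}(Y_{.2},Y_{.3}) = \beta_{2\rightarrow1,3[1]}\,\mathrm{Cov}(Y_{.1},Y_{.3}) + \beta_{2\rightarrow1,3[3]}\,\mathrm{Var}(Y_{.3})$. Solving for $\mathrm{Cov}(Y_{.1},Y_{.3})$ and dividing through by $\mathrm{Var}(Y_{.3})$ gives the population version of \eqref{eq:cov2toy_est}; symmetrically, taking $\mathrm{Cov}(\cdot,Y_{.2})$ in \eqref{eq:graphtoy2} yields the population version of \eqref{eq:cov1toy_est}. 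Here Lemma \ref{lem:coefflinear} together with Assumption \ref{hyp2} (in the guise of $Y_{.2}\independent\Omega_{.1}\mid Y_{.1},Y_{.3}$, resp.\ $Y_{.3}\independent\Omega_{.1}\mid Y_{.1},Y_{.2}$) ensures the population coefficients equal their complete-case counterparts, so the estimators in \eqref{eq:cov1toy_est}--\eqref{eq:cov2toy_est} are obtained by plugging in complete-case regression coefficients and empirical second moments of the observed/pivot variables; consistency then follows from \ref{hyp4} and the continuous-mapping theorem, provided the denominators $\beta^c_{3\rightarrow1,2[1]}$, $\beta^c_{2\rightarrow1,3[1]}$ and $\mathrm{Var}(Y_{.3})$ are nonzero.

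\textbf{Variance.} For \eqref{eq:vartoygraph_est} I would combine the two relations. From the covariance computation above, $\mathrm{Cov}(Y_{.1},Y_{.3}) = \bigl(\mathrm{Cov}(Y_{.2},Y_{.3})/\mathrm{Var}(Y_{.3}) - \beta_{2\rightarrow1,3[3]}\bigr)\mathrm{Var}(Y_{.3})/\beta_{2\rightarrow1,3[1]}$. Independently, Lemma \ref{lem:graphicalcov} (or equivalently the basic formula \eqref{eq:basicformula} applied to the edge $Y_{.1}\rightarrow Y_{.3}$ in the bottom-left graph of Figure \ref{fig:proofToy}) gives $\beta_{3\rightarrow1} = \mathrm{Cov}(Y_{.1},Y_{.3})/\mathrm{Var}(Y_{.1})$, hence $\mathrm{Var}(Y_{.1}) = \mathrm{Cov}(Y_{.1},Y_{.3})/\beta_{3\rightarrow1}$. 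Substituting the expression for $\mathrm{Cov}(Y_{.1},Y_{.3})$ yields exactly the population form of \eqref{eq:vartoygraph_est}. Replacing each population quantity by its complete-case / empirical estimator and invoking Lemma \ref{lem:coefflinear} and \ref{hyp4} gives consistency, under the stated nondegeneracy conditions $\beta^c_{3\rightarrow1}\neq0$, $\beta^c_{2\rightarrow1,3[1]}\neq0$, $\mathrm{Var}(Y_{.3})\neq0$.

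\textbf{Main obstacle.} The only delicate point is justifying that the causal coefficient $\beta_{3\rightarrow1}$ appearing via Lemma \ref{lem:graphicalcov} is indeed recoverable from the complete case — i.e.\ that $Y_{.1}$'s incoming edge coefficient can be read off a regression that avoids conditioning on $Y_{.1}$ itself (recall that regressing \emph{onto} $Y_{.1}$ is forbidden because \ref{hyp2} fails there). I expect this to be handled by noting that in the bottom-left graph $\beta_{3\rightarrow1}$ can be expressed through $\mathrm{Cov}(Y_{.1},Y_{.3})$ and $\mathrm{Var}(Y_{.1})$, the former being already recovered above and the latter being the target, so the relation is used as an identity rather than an estimating equation — which is precisely why the variance formula is self-referential and must be solved for $\mathrm{Var}(Y_{.1})$. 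Making this circular-looking manipulation rigorous (showing the system has the claimed unique solution) is where care is needed; everything else is substitution and an appeal to the continuous-mapping theorem.
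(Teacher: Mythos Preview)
Your derivation of the covariances and the variance is essentially the paper's own argument: the paper also obtains $\mathrm{Cov}(Y_{.1},Y_{.3})$ by reading Lemma~\ref{lem:graphicalcov} on the bottom-left graph as $\mathrm{Cov}(Y_{.2},Y_{.3})=\beta_{2\rightarrow1,3[1]}\beta_{1\rightarrow3}\mathrm{Var}(Y_{.3})+\beta_{2\rightarrow1,3[3]}\mathrm{Var}(Y_{.3})$ (your ``take $\mathrm{Cov}(\cdot,Y_{.3})$'' is exactly this, since $\beta_{1\rightarrow3}\mathrm{Var}(Y_{.3})=\mathrm{Cov}(Y_{.1},Y_{.3})$), and then writes $\mathrm{Var}(Y_{.1})=\mathrm{Var}(Y_{.3})\beta_{1\rightarrow3}/\beta_{3\rightarrow1}=\mathrm{Cov}(Y_{.1},Y_{.3})/\beta_{3\rightarrow1}$. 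So the substance of the proof is correct and matches the paper.

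Where you go astray is in your ``Main obstacle'' paragraph. There is no circularity and no forbidden regression. The coefficient $\beta_{3\rightarrow1}$ is, by the convention of \eqref{eq:basicformula}, the slope of the \emph{simple regression of $Y_{.3}$ on $Y_{.1}$} (i.e.\ $\mathrm{Cov}(Y_{.1},Y_{.3})/\mathrm{Var}(Y_{.1})$), not a regression of $Y_{.1}$ on anything. For that regression the conditioning set is $\{Y_{.1}\}$, and under the self-masked mechanism of the toy example one has $Y_{.3}\independent\Omega_{.1}\mid Y_{.1}$; hence Lemma~\ref{lem:coefflinear} applies directly and gives $\beta_{3\rightarrow1}=\beta_{3\rightarrow1}^{c}$. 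This is precisely what the paper does: it runs an \emph{extra} complete-case simple regression of $Y_{.3}$ on $Y_{.1}$ to get $\hat\beta_{3\rightarrow1}^{c}$, independently of the other estimates, and plugs it in. So you should drop the ``self-referential'' interpretation: $\hat\beta_{3\rightarrow1}^{c}$ is an input computed from data, not an unknown to be solved for, and the variance formula \eqref{eq:vartoygraph_est} is then a straight substitution with no system to invert.
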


\begin{proof}
    To derive some estimators of the variance and covariances of the MNAR variable $Y_{.1}$, one want to obtain the following formulas:
    \begin{align}\label{eq:vartoygraph}
	\mathrm{Var}(Y_{.1})&=\frac{\mathrm{Var}(Y_{.3})}{\beta_{3\rightarrow 1}^c}\frac{1}{\beta_{2\rightarrow 1, 3[1]}^c}\left(\frac{\mathrm{Cov}(Y_{.2},Y_{.3})}{\mathrm{Var}(Y_{.3})}-\beta_{2 \rightarrow 1,3[3]}^c\right), \\
	\label{eq:cov1toy}
	\mathrm{Cov}(Y_{.1},Y_{.2})&=\frac{1}{\beta_{3\rightarrow 1, 2[1]}^c}\left(\frac{\mathrm{Cov}(Y_{.2},Y_{.3})}{\mathrm{Var}(Y_{.2})}-\beta_{3 \rightarrow 1,2[2]}^c\right)\mathrm{Var}(Y_{.2}), \\
	\label{eq:cov2toy}
	\mathrm{Cov}(Y_{.1},Y_{.3})&=\frac{1}{\beta_{2\rightarrow 1, 3[1]}^c}\left(\frac{\mathrm{Cov}(Y_{.2},Y_{.3})}{\mathrm{Var}(Y_{.3})}-\beta_{2 \rightarrow 1,3[3]}^c\right)\mathrm{Var}(Y_{.3}).
	\end{align}

	Using Equation \eqref{eq:basicformula}, one has 
	$$\textrm{Cov}(Y_{.1},Y_{.3})=\textrm{Var}(Y_{.1})\beta_{3\rightarrow1},$$
	$$\textrm{Cov}(Y_{.3},Y_{.1})=\textrm{Var}(Y_{.3})\beta_{1\rightarrow3},$$
	so 
	$$\textrm{Var}(Y_{.1})=\frac{\textrm{Var}(Y_{.3})\beta_{1\rightarrow3}}{\beta_{3\rightarrow1}}.$$
	
	Considering the graphical model in the bottom left graph of Figure \ref{fig:proofToy},
	\begin{align}
	\notag
	\textrm{Cov}(Y_{.2},Y_{.3})&=\beta_{2\rightarrow 1, 3[1]}\beta_{1\rightarrow 3} \textrm{Var}(Y_{.3}) + \beta_{2 \rightarrow 1,3[3]} \textrm{Var}(Y_{.3}) &\textrm{(by Lemma \ref{lem:graphicalcov})} \\
	\notag 
	\Rightarrow \beta_{1\rightarrow 3}&=\frac{1}{\beta_{2\rightarrow 1, 3[1]}}\left(\frac{\textrm{Cov}(Y_{.2},Y_{.3})}{\textrm{Var}(Y_{.3})}-\beta_{2 \rightarrow 1,3[3]}\right) & \\
	\label{eq:beta13toy}
	\Rightarrow \beta_{1\rightarrow 3}&=\frac{1}{\beta_{2\rightarrow 1, 3[1]}^c}\left(\frac{\textrm{Cov}(Y_{.2},Y_{.3})}{\textrm{Var}(Y_{.3})}-\beta_{2 \rightarrow 1,3[3]}^c\right)  &
	\end{align}
	where the last implication is given by Lemma \ref{lem:coefflinear} and Assumption \ref{hyp1}, giving also $$\beta_{3\rightarrow1}=\beta^c_{3\rightarrow1},$$
	
	which leads to Equation \eqref{eq:vartoygraph}.

	By \eqref{eq:basicformula}, the covariances can be expressed in two different ways, 
	\begin{align}
	\textrm{Cov}(Y_{.1},Y_{.2})&=\beta_{2 \rightarrow 1}\textrm{Var}(Y_{.1}) \quad \textrm{ and } \quad \textrm{Cov}(Y_{.1},Y_{.3})=\beta_{3 \rightarrow 1}\textrm{Var}(Y_{.1}),
	\label{eq:covfirstcase}
	\\
	\textrm{Cov}(Y_{.1},Y_{.2})&=\beta_{1 \rightarrow 2}\textrm{Var}(Y_{.2})\quad  \textrm{ and } \quad \textrm{Cov}(Y_{.1},Y_{.3})=\beta_{1 \rightarrow3}\textrm{Var}(Y_{.3}).
	\label{eq:covsecondcase}
	\end{align}
	In \eqref{eq:covfirstcase}, the coefficients $\beta_{2 \rightarrow 1}$ and $\beta_{3 \rightarrow 1}$ can be estimated on the complete case using Lemma \ref{lem:coefflinear}, but the variance of $Y_{.1}$ has still to be taken care of.  Instead of potentially propagate error from \eqref{eq:vartoygraph}, we propose to favor the expressions given in \eqref{eq:covsecondcase} to evaluate the covariances. 
	
	Focusing on \eqref{eq:covsecondcase},  the coefficient $\beta_{1 \rightarrow 3}$ is given in \eqref{eq:beta13toy} and $\beta_{1 \rightarrow 2}$ can be obtained using the same method, based on the reduced graphical model in the bottom right graph of Figure \ref{fig:proofToy} (by Assumption \ref{hyp1}), so that
	$$\beta_{1\rightarrow 2}=\frac{1}{\beta_{3\rightarrow 1, 2[1]}^c}\left(\frac{\textrm{Cov}(Y_{.2},Y_{.3})}{\textrm{Var}(Y_{.2})}-\beta_{3 \rightarrow 1,2[2]}^c\right).$$
	Therefore, by plugging it in \eqref{eq:covsecondcase}, Equations \eqref{eq:cov1toy} and \eqref{eq:cov2toy} are obtained.
	
	The natural estimators for $\mathrm{Var}(Y_{.1})$, $\mathrm{Cov}(Y_{.1},Y_{.2})$ and $\mathrm{Cov}(Y_{.1},Y_{.3})$ are then given by \eqref{eq:vartoygraph_est}, \eqref{eq:cov1toy_est} and \eqref{eq:cov2toy_est}. They are consistent given that all the quantites involved are consistent, by using \ref{hyp4} (for the consistency of $\widehat{\mathrm{Var}}(Y_{.2})$, $\widehat{\mathrm{Var}}(Y_{.3})$ and $\widehat{\mathrm{Cov}}(Y_{.2},Y_{.3})$) and Lemma \ref{lem:coefflinear} (for the consistency of $\hat{\beta}^c_{j \rightarrow k,\ell}$). 
\end{proof}

\begin{remark}[Var-covariance estimation: algebraic vs. graphical approach]
As for the mean, the exogeneity assumption is required in the last step of the algebraic approach to estimate coefficients and in the first step of the graphical approach to obtain structural equation models. However, contrary to the estimator suggested for the mean,  the estimators in both graphical and algebraic approaches here differ (compare \eqref{eq:estimcov_main} with \eqref{eq:vartoygraph_est}, \eqref{eq:cov1toy_est} 	and \eqref{eq:cov2toy_est}). Indeed, the algebraic approach is based on the use of conditionality, whereas the graphical one relies on graphical results standing for the linear models when exogeneity holds.
\end{remark}

\section{PPCA with MAR data}
\label{sec:MARcase}

The following proposition is an adaptation of our method to handle MAR data, called \textbf{MAR} in Section \ref{sec:simu_synthdata}, inspired by \cite[Theorems 1, 2, 3]{mohan2018estimation}.  
In the MAR case, we assume the following
\begin{enumerate}[label=\textbf{A\arabic*$_{\text{MAR}}$.}]
    \setcounter{enumi}{0}
    \item \label{hyp1MAR} $(B_{.j'})_{j' \in {\mathcal{J}}}$ is invertible.
    \item \label{hyp2MAR} $\forall m \in \mathcal{M},$ $Y_{.m} \independent \Omega_{.m} | (Y_{k})_{k \in \widebar{\{m\}}}$
    \item \label{hyp3MAR}  $\forall m \in \mathcal{M}$, the complete-case coefficients $\mathcal{B}_{m\rightarrow \mathcal{J}[0]}^c$ and $\mathcal{B}_{m\rightarrow \mathcal{J}[k]}^c, k \in \mathcal{J}$ can be consistently estimated.
\end{enumerate}
\begin{enumerate}[label=\textbf{A\arabic*$_{\text{MAR}}$.}]
    \setcounter{enumi}{4}
    \item \label{hyp5MAR} $\forall \ell \in \widebar{\mathcal{J}}$, for all set $\mathcal{H} \subset \mathcal{J}_{-j}$ such that $|\mathcal{H}|=r-1$,  $\begin{pmatrix} B_{.\ell} & (B_{.j'})_{j' \in \mathcal{H}} \end{pmatrix}$ is invertible,
    \item \label{hyp6MAR} 
    $\forall m \in \mathcal{M}, \forall \ell \in \widebar{\mathcal{J}}\setminus\mathcal{M}$, $\forall j \in \mathcal{J}$,
    \, $Y_{.m} \independent \Omega_{.\ell}| (Y_{.k})_{k \in \widebar{\{m\}}}.$
\end{enumerate}
\begin{enumerate}[label=\textbf{A\arabic*$_{\text{MAR}}$.}]
    \setcounter{enumi}{7}
    \item \label{hyp8MAR} $\forall m \in \mathcal{M}, \forall \ell \in \widebar{\{m\}}\setminus\mathcal{J}$, 
    for all set $\mathcal{H} \subset \mathcal{J}$ such that 
    $|\mathcal{H}|=r-1$, the complete-case coefficients 
    $\mathcal{B}_{m\rightarrow \ell,\mathcal{H}[0]}^c$ and 
    $\mathcal{B}_{m\rightarrow \ell,\mathcal{H}[k]}^c, k \in \{\ell\}\cup\mathcal{H}$ can be consistently estimated. 
\end{enumerate}

\begin{prop}[Expectation, variance and covariances formulae for a MAR variable when $p=3$ and $r=2$]\label{prop:MARPearl}
	Consider the PPCA model \eqref{eq:model}. Under Assumptions \ref{hyp1MAR} and \ref{hyp2MAR}, one can construct the estimators of the mean, the variance and the covariances with a pivot variable for any MAR variable $Y_{.m}, m \in \mathcal{M}$, as follows 
	\begin{itemize}[label=\textendash]
		\item the mean of the missing variable $$\hat{\alpha}_m=\hat{\mathcal{B}}^c_{m \rightarrow \mathcal{J} [0]}+\sum_{j \in \mathcal{J}}\hat{\mathcal{B}}^c_{m \rightarrow \mathcal{J} [j]}\hat{\alpha}_j,$$
		with $\mathcal{J}$ the pivot variables set,
		\item 	the variance of the missing variable 
		\begin{multline*}
		\widehat{\mathrm{Var}}(Y_{.m})=\widehat{Q}_{\mathrm{MAR}}^c
		+\sum_{j \in \mathcal{J}}(\hat{\mathcal{B}}_{m \rightarrow \mathcal{J} [j]}^c)^2\widehat{\mathrm{Var}}(Y_{.j}) \\
		+2\sum_{(j<k) \in \mathcal{J}}\hat{\mathcal{B}}_{m \rightarrow \mathcal{J} [j]}^c\hat{\mathcal{B}}_{m \rightarrow \mathcal{J} [k]}^c\widehat{\mathrm{Cov}}(Y_{.j},Y_{.k}),
		\end{multline*}
		with
	\begin{multline*}
	\widehat{Q}_{\mathrm{MAR}}^{c}=\left(\widehat{\mathrm{Var}}(Y_{.m})\big| \Omega_{.m}=1\right)
	\\
	-\left(\widehat{\mathrm{Cov}}((Y_{.j})_{j \in \widebar{\{m\}}},Y_{.m}) \widehat{\mathrm{Var}}((Y_{.j})_{j \in \widebar{\{m\}}})^{-1} \widehat{\mathrm{Cov}}((Y_{.j})_{j \in \widebar{\{m\}}},Y_{.m})^T \big| \Omega_{.m}=1\right).
	\end{multline*}
		\item the covariances between the missing variable and a pivot variable, for all $\ell \in \mathcal{J}$,
		\begin{multline*}
		 \widehat{\mathrm{Cov}}(Y_{.m},Y_{.\ell})=\hat{\mathcal{B}}^c_{m \rightarrow \mathcal{J}[0]}\hat{\alpha}_{\ell}+\hat{\mathcal{B}}^c_{m \rightarrow \mathcal{J}[\ell]}(\widehat{\mathrm{Var}}(Y_{.\ell})+\hat{\alpha}_\ell^2) \\
	+\sum_{k\in \mathcal{J}_{-\ell}} \hat{\mathcal{B}}^c_{m\rightarrow \mathcal{J}[k]}(\widehat{\mathrm{Cov}}(Y_{.\ell},Y_{.k})+\hat{\alpha}_{\ell}\hat{\alpha}_{k}) - \hat{\alpha}_{m}\hat{\alpha}_{\ell}
		\end{multline*}
	\end{itemize}
	Under Assumption \ref{hyp3MAR} and \ref{hyp4}, these estimators are consistent. 
	
	In addition, under Assumption \ref{hyp5MAR}, \ref{hyp6MAR} and \ref{hyp6bis}, one can construct the estimator of the covariance between a MAR variable $Y_{.m}$ for $m \in \mathcal{M}$ and any not pivot variable as follows
	\begin{itemize}[label=\textendash]
	\item the covariances between the missing variable and any not pivot variable, for all $\ell \in \widebar{\{m\}}\setminus\mathcal{J}$, choose $r-1$ variable indexes in $\mathcal{J}$ to form the set $\mathcal{H}\cup \mathcal{J}$ such that $|\mathcal{H}|=r-1$
		\begin{multline*}
		 \widehat{\mathrm{Cov}}(Y_{.m},Y_{.\ell})=\mathcal{B}^c_{m \rightarrow \ell,\mathcal{H}[0]}\hat{\alpha}_{\ell}+\hat{\mathcal{B}}^c_{m \rightarrow \ell,\mathcal{H}[\ell]}(\widehat{\mathrm{Var}}(Y_{.\ell})+\hat{\alpha}_\ell^2) \\
	+\sum_{k\in \mathcal{H}} \hat{\mathcal{B}}^c_{m\rightarrow \ell,\mathcal{H}[k]}(\widehat{\mathrm{Cov}}(Y_{.\ell},Y_{.k})+\hat{\alpha}_{\ell}\hat{\alpha}_{k}) - \hat{\alpha}_{m}\hat{\alpha}_{\ell}
		\end{multline*}
	\end{itemize}
	
	Under the additional Assumptions \ref{hyp8MAR} and \ref{hyp7} 
	this estimator is consistent.
\end{prop}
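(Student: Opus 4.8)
The plan is to transpose, essentially step by step, the argument already carried out in the MNAR case (Lemma~\ref{lem:PCAlinear} and Propositions~\ref{prop:mean_formula_general}, \ref{prop:var_formula_general}, \ref{prop:covmiss}), the crucial simplification being that under the MAR condition \ref{hyp2MAR} the variable $Y_{.m}$ may itself be placed on the left-hand side of the regression. First I would establish the MAR analogue of Lemma~\ref{lem:PCAlinear}: since by \ref{hyp1MAR} the $r\times r$ block $(B_{.j'})_{j'\in\mathcal{J}}$ is invertible, solving the PPCA system \eqref{eq:model} restricted to the pivot columns gives $W=\bigl((Y_{.j'})_{j'\in\mathcal{J}}-\mathbf{1}\alpha_{\mathcal{J}}-\epsilon_{\mathcal{J}}\bigr)\bigl((B_{.j'})_{j'\in\mathcal{J}}\bigr)^{-1}$, and substituting into the row of \eqref{eq:model} corresponding to $Y_{.m}$ yields the exact identity $Y_{.m}=\mathcal{B}_{m\rightarrow\mathcal{J}[0]}+\sum_{j\in\mathcal{J}}\mathcal{B}_{m\rightarrow\mathcal{J}[j]}Y_{.j}+\zeta_m$ with $\zeta_m=-\sum_{j\in\mathcal{J}}\mathcal{B}_{m\rightarrow\mathcal{J}[j]}\epsilon_{.j}+\epsilon_{.m}$, where the $\mathcal{B}_{m\rightarrow\mathcal{J}[j]}$ are the coordinates of $\bigl((B_{.j'})_{j'\in\mathcal{J}}\bigr)^{-1}B_{.m}$. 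The complete-case coefficients $\mathcal{B}^c_{m\rightarrow\mathcal{J}[k]}$ are then defined exactly as in Definition~\ref{def:coeff_complete_Case}, i.e.\ through the corresponding conditional model of $Y_{.m}$ given $(Y_{.k})_{k\in\widebar{\{m\}}}$ on the event $\{\Omega_{.m}=1\}$, with complete-case residual $\zeta^c_m$ again of the form $-\sum_{j\in\mathcal{J}}\mathcal{B}^c_{m\rightarrow\mathcal{J}[j]}\epsilon_{.j}+\epsilon_{.m}$.

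For the mean I would mimic the proof of Proposition~\ref{prop:mean_formula_general}: writing $\alpha_m=\mathbb{E}\bigl[\mathbb{E}[Y_{.m}\mid(Y_{.k})_{k\in\widebar{\{m\}}}]\bigr]$ and invoking \ref{hyp2MAR} to replace the inner conditional expectation by the one further conditioned on $\{\Omega_{.m}=1\}$, which by Definition~\ref{def:coeff_complete_Case} equals $\mathcal{B}^c_{m\rightarrow\mathcal{J}[0]}+\sum_{j\in\mathcal{J}}\mathcal{B}^c_{m\rightarrow\mathcal{J}[j]}Y_{.j}+\mathbb{E}[\zeta^c_m\mid(Y_{.k})_{k\in\widebar{\{m\}}}]$. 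Taking the outer expectation and using that $\zeta^c_m$ is a linear combination of the centred noises $\epsilon_{.j}$, $\epsilon_{.m}$ (so $\mathbb{E}[\zeta^c_m]=0$), the noise contribution vanishes and one obtains the stated identity $\alpha_m=\mathcal{B}^c_{m\rightarrow\mathcal{J}[0]}+\sum_{j\in\mathcal{J}}\mathcal{B}^c_{m\rightarrow\mathcal{J}[j]}\alpha_j$; the plug-in estimator is consistent by \ref{hyp3MAR} and \ref{hyp4}.

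For the variance and the covariances with a pivot I would run the proof of Proposition~\ref{prop:var_formula_general} with $Z=(Y_{.k})_{k\in\widebar{\{m\}}}$, $Y_{.m}$ taking the role previously played by $Y_{.j}$ and the pivots $\mathcal{J}$ the role of $\{m\}\cup\mathcal{J}_{-j}$. The law of total variance gives $\mathrm{Var}(Y_{.m})=\mathbb{E}[\mathrm{Var}(Y_{.m}\mid Z)]+\mathrm{Var}(\mathbb{E}[Y_{.m}\mid Z])$; by \ref{hyp2MAR} the first summand equals the deterministic Gaussian conditional-variance expression evaluated on the complete case, i.e.\ $Q^c_{\mathrm{MAR}}$, while the second, after substituting the complete-case linear form for $\mathbb{E}[Y_{.m}\mid Z,\Omega_{.m}=1]$ and controlling $\mathbb{E}[\zeta^c_m\mid Z]$ by the Gaussian computation behind \eqref{eq:epsilonkcond} (the pivots and their noises being jointly Gaussian), reduces to $\sum_{j\in\mathcal{J}}(\mathcal{B}^c_{m\rightarrow\mathcal{J}[j]})^2\mathrm{Var}(Y_{.j})+2\sum_{(j<k)\in\mathcal{J}}\mathcal{B}^c_{m\rightarrow\mathcal{J}[j]}\mathcal{B}^c_{m\rightarrow\mathcal{J}[k]}\mathrm{Cov}(Y_{.j},Y_{.k})$ up to the same $\mathcal{O}(\sigma^2)$ remainder handled in Proposition~\ref{prop:var_formula_general}. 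For $\ell\in\mathcal{J}$, writing $\mathrm{Cov}(Y_{.m},Y_{.\ell})=\mathbb{E}[Y_{.\ell}\,\mathbb{E}[Y_{.m}\mid Z]]-\alpha_m\alpha_\ell$ ($Y_{.\ell}$ being $Z$-measurable since $\ell\in\mathcal{J}\subset\widebar{\{m\}}$), substituting the complete-case form, and using $\mathbb{E}[Y_{.j}Y_{.\ell}]=\mathrm{Cov}(Y_{.j},Y_{.\ell})+\alpha_j\alpha_\ell$ (and $\mathrm{Var}(Y_{.\ell})+\alpha_\ell^2$ when $j=\ell$) yields the stated covariance formula; consistency of all these estimators follows from \ref{hyp3MAR} and \ref{hyp4}.

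Finally, the covariance between $Y_{.m}$ and a non-pivot variable $Y_{.\ell}$, $\ell\in\widebar{\{m\}}\setminus\mathcal{J}$, is obtained along the lines of Proposition~\ref{prop:covmiss}: \ref{hyp5MAR} provides, in place of \ref{hyp1MAR}, a linear model of $Y_{.m}$ on $Y_{.\ell}$ together with the $r-1$ chosen pivots $\mathcal{H}$; \ref{hyp6MAR} together with \ref{hyp6bis} licenses the conditional-independence reductions on $\{\Omega_{.m}=1\}$ (and on $\{\Omega_{.\ell}=1\}$ when $Y_{.\ell}$ is itself missing); and the covariance-extraction computation used for the pivot case, now applied to this model, produces the stated expression, consistency following from \ref{hyp8MAR} and \ref{hyp7} (together with \ref{hyp3MAR}, \ref{hyp4}). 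I expect the main obstacle to be bookkeeping rather than conceptual: one must pin down the complete-case coefficients precisely and verify that the MAR assumptions \ref{hyp2MAR} and \ref{hyp6MAR}, which are phrased with the \emph{full} conditioning vector $(Y_{.k})_{k\in\widebar{\{m\}}}$, mesh with the reduced linear models involving only $r$ (respectively $r-1$) pivot columns --- this rests on the fact that $Y_{.m}$ depends on the remaining columns solely through the latent factor $W$, so the extra conditioning variables are redundant up to $\mathcal{O}(\sigma^2)$ --- and then carry the $\mathcal{O}(\sigma^2)$ remainders through the second-moment identities exactly as in Proposition~\ref{prop:var_formula_general}, the mean identity being exact.
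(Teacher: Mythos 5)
Your proposal is correct and follows essentially the same route as the paper: the paper's proof simply states that the argument mirrors Propositions \ref{prop:mean_formula_general}, \ref{prop:var_formula_general} and \ref{prop:covmiss} with the regressions reversed (under \ref{hyp2MAR} one may regress $Y_{.m}$ directly on the pivots, which is forbidden in the MNAR case), and then only spells out the mean identity in the $p=3$, $r=2$ toy example. You reproduce exactly this idea, and in addition make explicit the law-of-total-variance and covariance-extraction steps (with their $\mathcal{O}(\sigma^2)$ remainders) that the paper leaves implicit.
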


\begin{proof}
The proof follows exactly the same direction than in Proposition \ref{prop:mean_formula_general}, \ref{prop:var_formula_general} and \ref{prop:covmiss}. The only difference is that the regressions used are not the same. 

For the sake of clarity, consider the same toy example as in Section \ref{sec:toyexample} where $p= 3,r= 2$, in which only one variable can be missing (at random), and fix $\mathcal{M}=\{1\}$ and $\mathcal{J}=\{2,3\}$. Note that here the MAR mechanism leads to $\mathbb{P}(\Omega_{.1}=0|Y_{.1},Y_{.2},Y_{.3})=\mathbb{P}(\Omega_{.1}=0|Y_{.2},Y_{.3}).$. The goal is to estimate the mean of $Y_{.1}$, without specifying the distribution of the missing-data mechanism and using only the observed data. 

Assumption \ref{hyp1MAR} allows to obtain  linear link between the MAR variable $Y_{.1}$ and the pivot variables ($Y_{.2},Y_{.3}$). In particular, one has
$$Y_{.1} = \beta_{1\rightarrow2,3[0]} + \beta_{1\rightarrow2,3[2]}Y_{.2} + \beta_{1\rightarrow2,3[3]}Y_{.3} + \zeta,
$$
with $\beta_{1\rightarrow2,3[0]}$, $\beta_{1\rightarrow2,3[2]}$ and $\beta_{1\rightarrow2,3[3]}$ the intercept and coefficients standing for the effects of $Y_{.1}$ on $Y_{.2}$ and $Y_{.3}$, and with 
$$\zeta=-\mathcal{B}_{1\rightarrow 2,3[2]} \epsilon_{.2} - \mathcal{B}_{1\rightarrow 2,3[3]} \epsilon_{.3} + \epsilon_{.1}$$

Assumption \ref{hyp2MAR}, \textit{i.e.} $Y_{.1} \independent \Omega_{.1} | Y_{.2}, Y_{.3}$, is required to obtain identifiable and consistent parameters of the distribution of $Y_{.1}$ given $Y_{.2},Y_{.3}$ in the complete-case when $\Omega_{.1}=1$, denoted as $\beta_{1\rightarrow2,3[0]}^c$, $\beta_{1\rightarrow2,3[2]}^c$ and $\beta_{1\rightarrow2,3[3]}^c$,
$$
(Y_{.1})_{| \Omega_{.1}=1} = \beta_{1\rightarrow2,3[0]}^c + \beta_{1\rightarrow2,3[2]}^c Y_{.2} + \beta_{1\rightarrow2,3[3]}^c Y_{.3} + \zeta^c,
$$
with 
$$\zeta^c=-\mathcal{B}^c_{1\rightarrow 2,3[2]} \epsilon_{.2} - -\mathcal{B}^c_{1\rightarrow 2,3[3]} \epsilon_{.3} + \epsilon_{.1}$$
(In the MNAR case, the regression of $Y_{.1}$ on $(Y_{.2},Y_{.3})$ is prohibited, as \ref{hyp2MAR} does not hold. That is why we used the regression of $Y_{.2}$ on $Y_{.1}$ and $Y_{.3}$.);

Using again \ref{hyp2MAR}, one has 
$$\mathbb{E}\left[Y_{.1}|Y_{.2}, Y_{.3},\Omega_{.1}=1\right]=\mathbb{E}\left[\beta_{1\rightarrow2,3[0]}^c + \beta_{1\rightarrow2,3[1]}^c Y_{.2} + \beta_{1\rightarrow2,3[3]}^c Y_{.3}|Y_{.2},Y_{.3}\right]+\mathbb{E}[\zeta^c|Y_{.2},Y_{.3}],$$
and taking the expectation leads to
$$\mathbb{E}\left[Y_{.1}\right]=\beta_{1\rightarrow2,3[0]}^c + \beta_{1\rightarrow2,3[1]}^c\mathbb{E}\left[Y_{.2}\right] + \beta_{1\rightarrow2,3[3]}^c\mathbb{E}\left[Y_{.3}\right],$$
given that $\mathbb{E}[\epsilon_{.k}]=0, \: \forall k\in \{1,2,3\}$.

One obtains
$$\alpha_1=\beta_{1\rightarrow2,3[0]}^c + \beta_{1\rightarrow2,3[1]}^c\alpha_2 + \beta_{1\rightarrow2,3[3]}^c\alpha_3$$
A natural estimator for $\alpha_1$ is 
$$\hat{\alpha}_1=\hat{\beta}_{1\rightarrow2,3[0]}^c + \hat{\beta}_{1\rightarrow2,3[1]}^c\hat{\alpha}_2 + \hat{\beta}_{1\rightarrow2,3[3]}^c\hat{\alpha}_3,$$
which is consistent using Assumption \ref{hyp3MAR} and \ref{hyp4}.
\end{proof}

\end{document}